\newtheorem{lemm}{Lemma}[section]
\newtheorem{coro}{Corollary}[section]
\newtheorem{prop}{Proposition}[section]
\newtheorem{theo}{Theorem}[section]
\theoremstyle{definition}
\newtheorem{defi}{Definition}[section]
\newtheorem{rema}{Remark}[section]
\numberwithin{equation}{section}
\def\Bbb{\mathbb}
\def\fb{\partial\{u>0\}}
\def\uep{u^\varepsilon}
\def\ep{\varepsilon}
\def\pep{P_\varepsilon}
\def\a{\alpha}
\def\fint{\operatorname {--\!\!\!\!\!\int\!\!\!\!\!--}}
\def\uepj{u^{\varepsilon_j}}
\def\rn1{\Bbb R^{N+1}}
\def\rn{\Bbb R^{N}}
\def\epj{\varepsilon_j}
\def\fepj{f^{{\varepsilon}_j}}
\def\fep{f^{{\varepsilon}}}
\def\a*{\alpha^*_\ep}
\def\di{\displaystyle}
\def\R{\mathbb R}
\def\lstar{\lambda^*}
\def\lone{\lambda_{\min}}
\def\ltwo{\lambda_{\max}}
\def\pint{\operatorname {--\!\!\!\!\!\int\!\!\!\!\!--}}
\def\K{{\mathcal{K}}}
\def\psubep{p_{\varepsilon}}
\def\psubepj{p_{\epj}}
\begin{document}
\title[Inhomogeneous minimization problems for the $p(x)$-Laplacian]{Inhomogeneous minimization problems \\ for the $p(x)$-Laplacian}
\author[Claudia Lederman]{Claudia Lederman}
\author[Noemi Wolanski]{Noemi  Wolanski}
\address{IMAS - CONICET and Departamento  de
Ma\-te\-m\'a\-ti\-ca, Facultad de Ciencias Exactas y Naturales,
Universidad de Buenos Aires, (1428) Buenos Aires, Argentina.}
\email[Claudia Lederman]{clederma@dm.uba.ar} \email[Noemi
Wolanski]{wolanski@dm.uba.ar}
\thanks{Supported by the Argentine Council of Research CONICET under the project PIP 11220150100032CO 2016-2019,   UBACYT 20020150100154BA and
ANPCyT PICT 2016-1022.}

\keywords{Minimization problem, free boundary problem, variable
exponent spaces,
 regularity of the free boundary,
inhomogeneous problem, singular perturbation.
\\
\indent 2010 {\it Mathematics Subject Classification.} 35R35,
35B65, 35J60, 35J70, 35J20, 49K20}
\begin{abstract} This paper is devoted to the study of inhomogeneous minimization problems associated to the $p(x)$-Laplacian. We make a thorough analysis of the
essential  properties of their minimizers and we establish a relationship with a suitable free boundary problem.

On the one hand, we study the problem of minimizing the functional
$J(v)=\int_\Omega\Big(\frac{|\nabla
v|^{p(x)}}{p(x)}+\lambda(x)\chi_{\{v>0\}}+fv\Big)\,dx$. We show
that nonnegative local minimizers $u$ are solutions to the
free boundary problem: $u\ge 0$ and
\begin{equation}
\label{fbp-px}\tag{$P(f,p,{\lambda}^*)$}
\begin{cases}
\Delta_{p(x)}u:=\mbox{div}(|\nabla u(x)|^{p(x)-2}\nabla
u)= f & \mbox{in }\{u>0\}\\
u=0,\ |\nabla u| = \lambda^*(x) & \mbox{on }\partial\{u>0\}
\end{cases}
\end{equation}
with
$\lambda^*(x)=\Big(\frac{p(x)}{p(x)-1}\,\lambda(x)\Big)^{1/p(x)}$
 and that the free boundary is a
$C^{1,\alpha}$ surface with the exception of a subset of
${\mathcal H}^{N-1}$-measure zero.

On the other hand, we study  the  problem of minimizing  the
functional $J_{\ep}(v)=\di \int_\Omega \Big(\frac{|\nabla
v|^{\psubep(x)}}{\psubep(x)}+B_{\ep}(v)+\fep v\Big)\, dx$, where
$B_\ep(s)=\int _0^s\beta_\ep(\tau) \, d\tau$, $\ep>0$,
${\beta}_{\varepsilon}(s)={1 \over \varepsilon} \beta({s \over
\varepsilon})$, with $\beta$  a  Lipschitz  function satisfying
$\beta>0$ in $(0,1)$, $\beta\equiv 0$ outside $(0,1)$.  We prove
that if $\uep$ are nonnegative local minimizers, then $\uep$ are
solutions to
\begin{equation}
\label{eq}\tag{$P_\ep(\fep, p_\ep)$}
\Delta_{p_\ep(x)}\uep={\beta}_{\varepsilon}(\uep)+\fep, \quad
u^{\ep}\geq 0.
\end{equation}

Moreover, if the functions $\uep$, $\fep$ and $p_\ep$ are
uniformly bounded, we show that limit functions $u$ ($\ep\to 0$) are solutions
to the free boundary problem $P(f,p,{\lambda}^*)$ with
$\lambda^*(x)=\Big(\frac{p(x)}{p(x)-1}\,M\Big)^{1/p(x)}$, $M=\int
\beta(s)\, ds$, $p=\lim p_\ep$, $f=\lim \fep$, and that the free boundary is a $C^{1,\alpha}$
surface with the exception of a subset of ${\mathcal
H}^{N-1}$-measure zero.

In order to obtain our results we need to overcome deep technical difficulties and develop new strategies, not present in the previous literature for this type of problems.\end{abstract}
\maketitle

\bigskip


\begin{section}{Introduction}
\label{sect-intro}

This paper is devoted to the study of inhomogeneous minimization problems associated to the $p(x)$-Laplacian. We make a thorough analysis of the
essential  properties of their minimizers and we establish a relationship with a suitable free boundary problem.

The first  minimization problem under consideration corresponds to the functional
\begin{equation}\label{funct-J}
J(v)=\int_\Omega\Big(\frac{|\nabla
v|^{p(x)}}{p(x)}+\lambda(x)\chi_{\{v>0\}}+fv\Big)\,dx.
\end{equation}
In the particular case in which $p(x)\equiv 2$ and $f(x)\equiv 0$, the functional becomes
\begin{equation*}
\int_\Omega\Big(\frac{|\nabla
v|^2}{2}+\lambda(x)\chi_{\{v>0\}}\Big)\,dx.
\end{equation*}
The corresponding minimization problem  in $H^1(\Omega)$ with prescribed nonnegative values on $\partial\Omega$ was first treated by Alt and Caffarelli  in the seminal paper \cite{AC} motivated by the study of flow problems of jets and cavities. In \cite{AC} it was shown that local minimizers are solutions of the following free boundary problem: $u\ge 0$ and
\begin{equation*}
\begin{cases}
\Delta u= 0 & \mbox{in }\{u>0\}\\
u=0,\ |\nabla u| = \lambda^*(x) & \mbox{on }\partial\{u>0\},
\end{cases}
\end{equation*}
with $\lambda^*(x)=(2\lambda(x))^{1/2}$ and   that the free boundary $\partial\{u>0\}$ is a
$C^{1,\alpha}$ surface with the exception of a subset of
${\mathcal H}^{N-1}$-measure zero.

\medskip

In the present work we prove that nonnegative local minimizers of functional \eqref{funct-J} are solutions to
the
inhomogeneous free boundary problem for the $p(x)$-Laplacian: $u\ge 0$ and
\begin{equation}
\tag{$P(f,p,{\lambda}^*)$}
\begin{cases}
\Delta_{p(x)}u:=\mbox{div}(|\nabla u(x)|^{p(x)-2}\nabla
u)= f & \mbox{in }\{u>0\}\\
u=0,\ |\nabla u| = \lambda^*(x) & \mbox{on }\partial\{u>0\},
\end{cases}
\end{equation}
with
$\lambda^*(x)=\Big(\frac{p(x)}{p(x)-1}\,\lambda(x)\Big)^{1/p(x)}$.

The $p(x)$-Laplacian serves as a model for a stationary
non-newtonian fluid with properties depending on the point in the
region where it moves. For example, such a situation corresponds
to an electrorheological fluid. These are fluids such that their
properties depend on the magnitude of the electric field applied
to it. In some cases, fluid and Maxwell's equations become
uncoupled and a single equation for the $p(x)$-Laplacian appears
(see \cite{R}).

\medskip

The second minimization problem we deal with corresponds to the functional
\begin{equation}\label{funct-Jep}
 J_{\ep}(v)=\di \int_\Omega \Big(\frac{|\nabla
v|^{\psubep(x)}}{\psubep(x)}+B_{\ep}(v)+\fep v\Big)\, dx,
\end{equation}
where $B_\ep(s)=\int _0^s\beta_\ep(\tau) \, d\tau$,
$\ep>0$, ${\beta}_{\varepsilon}(s)={1 \over \varepsilon} \beta({s
\over \varepsilon})$, with $\beta$  a  Lipschitz  function
satisfying $\beta>0$ in $(0,1)$, $\beta\equiv 0$ outside $(0,1)$.

The minimization problem for functional \eqref{funct-Jep} is a regularization of the one corresponding to functional \eqref{funct-J}. The primary purpose in studying a regularized problem is to obtain uniform properties and establish results which carry over in the limit. In fact, we  prove that if  $\uep$ are nonnegative local minimizers to \eqref{funct-Jep}, then
$\uep$ are solutions to
\begin{equation}
\tag{$P_\ep(\fep, p_\ep)$}
\Delta_{p_\ep(x)}\uep={\beta}_{\varepsilon}(\uep)+\fep, \quad
u^{\ep}\geq 0
\end{equation}
and moreover, if the functions $\uep$, $\fep$ and $p_\ep$ are
uniformly bounded, we show that limit functions $u$ ($\ep\to 0$)
are solutions to the free boundary problem $P(f,p,{\lambda}^*)$
with $\lambda^*(x)=\Big(\frac{p(x)}{p(x)-1}\,M\Big)^{1/p(x)}$,
$M=\int \beta(s)\, ds$, $p=\lim p_\ep$, $f=\lim \fep$.

Problem $P_\ep(\fep,p_\ep)$, when $p_\ep(x)\equiv 2$ and $\fep\equiv 0$,
arises in combustion theory to
describe the propagation of curved premixed equi-diffusional
deflagration flames.  The study of the limit $(\ep\to 0)$ was
proposed in the 1930s and was first rigorously studied in \cite{BCN}.
The inhomogeneous case, $\fep\not\equiv 0$, allows the treatment
of more general combustion models with nonlocal diffusion and/or
transport.
In the case of the $p_\ep(x)$-Laplacian, this singular perturbation problem  may model flame propagation in a fluid with electromagnetic sensitivity.

Our work here, for both minimization problems, consists in an exhaustive analysis of the properties of nonnegative local minimizers, namely, global regularity and behavior close to the free boundary. This analysis allows us to prove that nonnegative local minimizers $u$ of \eqref{funct-J}, and functions $u=\lim \uep$ ($\ep\to 0$), with $\uep$ nonnegative local minimizers of \eqref{funct-Jep}, are weak solutions to the free boundary problem $P(f,p,{\lambda}^*)$ (Theorems \ref{AC=weak} and \ref{lim=weak}).

In order to obtain our results we need to overcome deep technical difficulties and develop new strategies, not present in the previous literature for this type of problems.

One of the results we would like to highlight is the proof of the Lipschitz continuity of nonnegative local minimizers of functional \eqref{funct-J} (Theorem \ref{pre-lip} and Corollary \ref{Lip}). Our proof relies on a careful rescaling argument, which transforms the problem into a minimization problem for a more general operator with nonstandard growth for which the control of the coefficients becomes nontrivial. This result, which is new for $f\not\equiv 0$, is also new in the homogeneous case $f\equiv 0$ for the range $1<p(x)<2$. It is worth remarking that minimization problems for the $p(x)$-Laplacian are of particular interest in the range $1<p(x)<2$ in the study of image processing (see \cite{AMS, CLR}). Therefore, we firmly believe that our estimates in Theorem \ref{pre-lip} are of independent interest.

Let us also emphasize that a key ingredient in many of our proofs is the use of rescaling arguments which, in particular, involve the handling of sequences of functions exhibiting  nonuniform integrability. Thus, the use of these kind of arguments for functional \eqref{funct-J} requires the introduction of the new concept of mild minimizers (see Definition  \ref{mild-minim}). Similar subtle ideas are also required when dealing with functional \eqref{funct-Jep} (see Theorems \ref{limit-minim} and \ref{densprop-sing}).

Once we achieve our goal, namely, once we prove  the  fundamental
properties of nonnegative local minimizers described above, we are able to apply results for solutions to the singular perturbation problem $P_\ep(\fep,p_\ep)$ and for weak solutions to the free boundary problem $P(f,p,{\lambda}^*)$ we recently obtained in our
 works \cite{LW4} and \cite{LW5}, respectively.

As a consequence we derive the smoothness of the free boundary for nonnegative local minimizers $u$ of \eqref{funct-J}. More precisely, we prove that
  the free boundary $\partial\{u>0\}$ is a
$C^{1,\alpha}$ surface with the exception of a subset of
${\mathcal H}^{N-1}$-measure zero (Theorem
\ref{regAC}).

In an analogous way, we get the smoothness of the free boundary for limit functions $u$ ($\ep\to 0$) of nonnegative local minimizers $\uep$ of \eqref{funct-Jep}, i.e., the free boundary $\partial\{u>0\}$ is a $C^{1,\alpha}$ surface with the exception
of a subset of ${\mathcal H}^{N-1}$-measure zero (Theorem  \ref{regminim}).

We also obtain further regularity results on the free boundary,
for both minimization problems, under further regularity
assumptions on the data (Corollaries \ref{high-AC} and \ref{high-persing}). In particular, if the data are analytic, the free
boundary is an analytic surface with the exception of a subset of
${\mathcal H}^{N-1}$-measure zero.

As stated above, the minimization problem with the functional in \eqref{funct-J} was first studied by Alt and
Caffarelli in \cite{AC} with $p(x)\equiv2$
and $f\equiv0$. Still in the homogeneous case $f\equiv0$, the problem was studied by Alt,
Caffarelli and Friedman in \cite{ACF} for a quasilinear
equation in the uniformly elliptic case, then the $p$-Laplacian ($p(x)\equiv p$) was treated in
\cite{DP}, an operator with power-like growth was studied in \cite{MW1}, and the case of a variable power $p(x)$ was
considered in \cite{FBMW}. The linear inhomogeneous case was
treated in \cite{GS} and \cite{Le}.

We remark  that the inhomogeneous minimization problem for functional \eqref{funct-J} with
$f\not\equiv0$ we consider here had not
been treated  in previous literature even in the case of
$p(x)\equiv p\neq 2$.

On the other hand, as pointed out above, problem $P_\ep(\fep,p_\ep)$ ---arising in combustion theory---
 was first rigorously studied in \cite{BCN} when $p_\ep(x)\equiv 2$ and $\fep\equiv 0$. Since then, much research has been
done on this problem, see \cite{CLW1, CLW2, CV, DPS, LO, LW1, MW, RT, W}.
For the inhomogeneous case we refer to \cite{LW2, LW3, MoWa1, MoWa2}.
Preliminary results for the $p_\ep(x)$-Laplacian were obtained in \cite{LW4}.

\medskip

We also remark  that the inhomogeneous minimization problem for functional \eqref{funct-Jep} with
 $\fep\not\equiv0$ we consider here had not
been treated  in previous literature even in the case of
$p_\ep(x)\equiv p_\ep\neq 2$. When $\fep\equiv 0$ our results are also new when $p_\ep(x)\not\equiv p_\ep$.

\medskip

An outline of the paper is as follows: In Section 2 we define the
notion of weak solution to the free boundary problem \ref{fbp-px}
and include some related definitions and results.
 In Section 3 we prove existence of minimizers of the energy
 functional \eqref{funct-J} and develop an exhaustive analysis of the essential   properties of functions $u$ which are nonnegative
 local minimizers of that energy.
In Section 4 we prove existence of minimizers of the energy
 functional \eqref{funct-Jep} and develop an analogous analysis of the properties of functions $\uep$  which are nonnegative
 local minimizers of that energy and moreover, we get results for their limit functions $u$. Finally, in Section 5 we study the regularity of the free boundary for both
minimization problems.
 We conclude the paper with an Appendix where we
 collect some results on variable exponent Sobolev spaces  as well as some other results that are used in the paper.

\bigskip

\begin{subsection}{Preliminaries on Lebesgue and Sobolev spaces with variable
exponent}

Let $p :\Omega \to  [1,\infty)$ be a measurable bounded function,
called a variable exponent on $\Omega$ and denote $p_{\max} = {\rm
ess sup} \,p(x)$ and $p_{\min} = {\rm ess inf} \,p(x)$. We define
the variable exponent Lebesgue space $L^{p(\cdot)}(\Omega)$ to
consist of all measurable functions $u :\Omega \to \R$ for which
the modular $\varrho_{p(\cdot)}(u) = \int_{\Omega} |u(x)|^{p(x)}\,
dx$ is finite. We define the Luxemburg norm on this space by
$$
\|u\|_{L^{p(\cdot)}(\Omega)} = \|u\|_{p(\cdot)}  = \inf\{\lambda >
0: \varrho_{p(\cdot)}(u/\lambda)\leq 1 \}.
$$

This norm makes $L^{p(\cdot)}(\Omega)$ a Banach space.

There holds the following relation between $\varrho_{p(\cdot)}(u)$
and $\|u\|_{L^{p(\cdot)}}$:
\begin{align*}
\min\Big\{\Big(\int_{\Omega} |u|^{p(x)}\, dx\Big)
^{1/{p_{\min}}},& \Big(\int_{\Omega} |u|^{p(x)}\, dx\Big)
^{1/{p_{\max}}}\Big\}\le\|u\|_{L^{p(\cdot)}(\Omega)}\\
 &\leq  \max\Big\{\Big(\int_{\Omega} |u|^{p(x)}\, dx\Big)
^{1/{p_{\min}}}, \Big(\int_{\Omega} |u|^{p(x)}\, dx\Big)
^{1/{p_{\max}}}\Big\}.
\end{align*}

Moreover, the dual of $L^{p(\cdot)}(\Omega)$ is
$L^{p'(\cdot)}(\Omega)$ with $\frac{1}{p(x)}+\frac{1}{p'(x)}=1$.

Let $W^{1,p(\cdot)}(\Omega)$ denote the space of measurable
functions $u$ such that $u$ and the distributional derivative
$\nabla u$ are in $L^{p(\cdot)}(\Omega)$. The norm

$$
\|u\|_{1,p(\cdot)}:= \|u\|_{p(\cdot)} + \| |\nabla u|
\|_{p(\cdot)}
$$
makes $W^{1,p(\cdot)}(\Omega)$ a Banach space.

The space $W_0^{1,p(\cdot)}(\Omega)$ is defined as the closure of
the $C_0^{\infty}(\Omega)$ in $W^{1,p(\cdot)}(\Omega)$.

For the sake of completeness we include in an Appendix at the end of the paper some
additional results on these spaces that are used throughout the paper.
\end{subsection}

\bigskip

\begin{subsection}{Preliminaries on solutions to $p(x)$-Laplacian.} Let
$p(x)$ be as above, $g\in L^{\infty}(\Omega)$ and $a\in
L^{\infty}(\Omega)$, $a(x)\ge a_0>0$ in $\Omega$. We say that $u$
is a solution to
\begin{equation}\label{eq-con-a}
\mbox{div}(a(x)|\nabla u(x)|^{p(x)-2}\nabla u)= g(x) \ \mbox{ in }
\ \Omega
\end{equation}
if $u\in W^{1,p(\cdot)}(\Omega)$ and,  for every  $\varphi \in
C_0^{\infty}(\Omega)$, there holds that
$$
\int_{\Omega} a(x)|\nabla u(x)|^{p(x)-2}\nabla u \cdot \nabla
\varphi\, dx =-\int_{\Omega} \varphi\, g(x)\, dx.
$$
Under the assumptions of the present paper (see \ref{assump}
below) it follows as in Remark 3.2 in \cite{Wo} that $u\in
L_{\rm loc}^{\infty}(\Omega)$.

\bigskip

Moreover, for any $x\in\Omega$, $\xi,\eta\in\R^N$ fixed we have the
following inequalities
\begin{equation}
\label{desigualdades}
\begin{cases}
\ |\eta-\xi|^{p(x)}\leq C (|\eta|^{p(x)-2} \eta-|\xi|^{p(x)-2} \xi)\cdot
(\eta-\xi) & \quad  \mbox{ if } p(x)\geq 2,\\

\ |\eta-\xi|^2\Big(|\eta|+|\xi|\Big)^{p(x)-2}
\leq C (|\eta|^{p(x)-2} \eta-|\xi|^{p(x)-2} \xi) \cdot (\eta-\xi)& \quad
\mbox{ if } p(x)< 2,
\end{cases}
\end{equation}
with $C=C(N, p_{\min}, p_{\max})$. These inequalities imply that the function
$A(x,\xi)=a(x)|\xi|^{p(x)-2}\xi$ is strictly monotone. Then, the
comparison principle for equation \eqref{eq-con-a} holds on bounded domains since it
follows from the monotonicity of $A(x,\xi)$.

\end{subsection}

\begin{subsection}{Assumptions}\label{assump}

\smallskip

Throughout the paper we let $\Omega\subset\R^N$  be a domain.

\bigskip

\noindent{\bf Assumptions on $p_\ep(x)$ and $p(x)$.} We
assume that the functions $p_\ep(x)$ are measurable and verify
\begin{equation*}
1<p_{\min}\le p_\ep(x)\le p_{\max}<\infty,\qquad x\in\Omega.
\end{equation*}

For our main results we need  to assume further that $p_\ep(x)$ are uniformly
Lipschitz continuous in $\Omega$. In that case, we denote by $L$
the Lipschitz constant of $p_\ep(x)$, namely, $\|\nabla
p_\ep\|_{L^{\infty}(\Omega)}\leq L$.

Unless otherwise stated, the same assumptions above will be made
on the function $p(x)$.

When we are restricted to a ball $B_r$ we use $p_- = p_-(B_r)$ and
$p_+ = p_+(B_r)$ to denote the infimum and the supremum of $p(x)$
over $B_r$.

In some results we assume further that $p\in
W^{1,\infty}(\Omega)\cap W^{2,q}(\Omega)$, for some $q>1$.

\bigskip

\noindent{\bf Assumptions on $\lambda(x)$.} We assume that the
function $\lambda(x)$ is measurable in $\Omega$ and verifies
\begin{equation*}
0<\lone\le\lambda(x)\le\ltwo<\infty,\qquad x\in\Omega.
\end{equation*}

In some results we assume that $\lambda(x)$ is continuous in $\Omega$ and in our main results we assume further that $\lambda(x)$ is H\"older continuous in
$\Omega$.

\bigskip

\noindent{\bf Assumptions on $f_\ep(x)$ and $f(x)$.} We  assume that
$f_\ep, f\in L^{\infty}(\Omega)$. In some results we assume further that $f\in W^{1,q}(\Omega)$, for some $q>1$.

\bigskip

\noindent{\bf Assumptions on $\beta_\ep$.} We  assume that the
functions $\beta_\ep$ are defined by scaling of a single function
$\beta:\Bbb R\to \Bbb R$ satisfying:
\begin{itemize}
\item[i)] $\beta$ is a Lipschitz continuous function, \item [ii)]
$\beta>0$ in $(0,1)$ and $\beta\equiv 0$ otherwise, \item [iii)]
$\int_0^1\beta(s)\,ds=M$.
\end{itemize}
And then  $ \beta_\ep(s):=\frac{1}{\ep}\beta(\frac{s}{\ep}). $

\end{subsection}

\bigskip

\begin{subsection}{Notation}\ \ \newline

$\bullet$ $N$ \quad spatial dimension

$\bullet$   $\Omega\cap\partial\{ u>0 \}$ \quad free boundary

$\bullet$  $|S|$ \quad  $N$-dimensional Lebesgue measure of the
set $S$

$\bullet$ ${\mathcal H}^{N-1}$ \quad  $(N-1)$-dimensional
Hausdorff measure

$\bullet$  $B_r(x_0)$ \quad  open ball of radius $r$ and center
$x_0$

$\bullet$  $B_r$ \quad  open ball of radius $r$ and center $0$

$\bullet$  $B_r^+=B_r\cap\{x_N>0\}, \quad B_r^-=B_r\cap\{x_N<0\}$

$\bullet$  $B'_r(x_0)$ \quad  open ball of radius $r$ and center
$x_0$ in $\R^{N-1}$

$\bullet$  $B'_r$ \quad  open ball of radius $r$ and center $0$ in
$\R^{N-1}$

$\bullet$  $\fint_{B_r(x_0)}u= {1\over {|B_r(x_0)|}}
\int_{B_r(x_0)}u\,dx$

$\bullet$  $\fint_{\partial B_r(x_0)}u= {1\over {{\mathcal
H}^{N-1} (\partial B_r(x_0))}} \int_{\partial
B_r(x_0)}u\,d{\mathcal H}^{N-1}$

$\bullet$ $\chi_{{}_S}$ \quad  characteristic function of the set
$S$

$\bullet$ $u^{+}=\text{\rm max}(u,0)$,\quad $u^{-}=\text{\rm
max}(-u,0)$

$\bullet$ $\langle\,\xi\, ,\,\eta\,\rangle$ \, and \, $\xi \cdot
\eta$ \quad both denote scalar product in $\Bbb R^{N}$

$\bullet$ $B_\ep(s)=\int _0^s\beta_\ep(\tau) \, d\tau$

\end{subsection}

\end{section}


\begin{section}{Weak solutions to the free boundary problem $P(f,p,{\lambda}^*)$}
\label{sect-weak-solut} In this section, for the sake of
completeness, we define the notion of weak solution to the free
boundary problem \ref{fbp-px} and we give other related
definitions and results that we are going to employ in the paper.

We point out that in \cite{LW5} we derived some properties of the
weak solutions to problem \ref{fbp-px} and we developed a theory
for the regularity of the free boundary for weak solutions.

In this section $p(x)$ will be a Lipschitz continuous function.

We first need

\begin{defi}
\label{clw2-Definition 3.1} Let $u$ be a continuous and
nonnegative function in a domain $\Omega\subset \Bbb R^{N}$. We
say that $\nu$ is the exterior unit normal to the free boundary
$\Omega\cap\fb$ at a point $x_0\in\Omega\cap\fb$ in the
 measure theoretic sense, if $\nu
\in\Bbb R^N$, $|\nu|=1$ and
\begin{equation*}
 \lim_{r\to 0} \frac1{r^{N}} \int_{B_r(x_0)}
|\chi_{\{u>0\}}- \chi_{\{x\,/\, \langle x-x_0,\nu\rangle
<0\}}|\,dx = 0.
\end{equation*}
\end{defi}

Then we have

\begin{defi}\label{weak2} Let $\Omega\subset \Bbb R^{N}$ be a domain. Let $p$ be a
measurable function in $\Omega$ with $1<p_{\min}\le p(x)\le
p_{\max}<\infty$, $\lstar$ continuous in $\Omega$ with
$0<\lone\le\lstar(x)\le\ltwo<\infty$ and $f\in L^\infty(\Omega)$.
We call $u$ a weak solution of \ref{fbp-px} in $\Omega$ if
\begin{enumerate}
\item $u$ is continuous and nonnegative in $\Omega$, $u\in
W_{\rm loc}^{1,p(\cdot)}(\Omega)$ and $\Delta_{p(x)}u=f$ in
$\Omega\cap\{u>0\}$. \item For $D\subset\subset \Omega$ there are
constants $c_{\min}=c_{\min}(D)$, $C_{\max}=C_{\max}(D)$,
$r_0=r_0(D)$, $0< c_{\min}\leq C_{\max}$, $r_0>0$, such that for
balls $B_r(x)\subset D$ with $x\in
\partial \{u>0\}$ and $0<r\le r_0$
$$
c_{\min}\leq \frac{1}{r}\sup_{B_r(x)} u \leq C_{\max}.
$$
\item For $\mathcal{H}^{N-1}$ a.e.
$x_0\in\partial_{\rm{red}}\{u>0\}$ (that is, for ${\mathcal
H}^{N-1}$-almost every point $x_0\in\Omega\cap\fb$ such that
$\Omega\cap\partial\{u> 0\}$ has an exterior  unit normal
 $\nu(x_0)$ in the measure theoretic sense)
$u$ has the asymptotic development
\begin{equation*}
u(x)=\lambda^*(x_0)\langle x-x_0,\nu(x_0)\rangle^-+o(|x-x_0|).
\end{equation*}

\item For every $ x_0\in \Omega\cap\partial\{u>0\}$,
\begin{align*}
& \limsup_{\stackrel{x\to x_0}{u(x)>0}} |\nabla u(x)| \leq
\lambda^*(x_0).
\end{align*}

If there is a ball $B\subset\{u=0\}$ touching
$\Omega\cap\partial\{u>0\}$ at $x_0$, then
$$\limsup_{\stackrel{x\to x_0}{u(x)>0}} \frac{u(x)}{\mbox{dist}(x,B)}\geq  \lambda^*(x_0). $$
\end{enumerate}
\end{defi}

\begin{defi}\label{nondegener} Let $v$ be a continuous nonnegative function
in a domain $\Omega\subset\mathbb{R}^N$. We say that $v$ is
nondegenerate at a point  $x_0\in \Omega\cap\{v=0\}$ if there
exist $c>0$, $\bar r_0>0$ such that one of the following
conditions holds:
\begin{equation}\label{nond-prom-bol}
\fint_{B_r(x_0)} v\, dx\geq c r\quad \mbox{ for } 0<r\leq \bar
r_0,
\end{equation}
\begin{equation}\label{nond-prom-casc}
\fint_{\partial B_r(x_0)} v\, dx\geq c r\quad \mbox{ for } 0<r\leq
\bar r_0,
\end{equation}
\begin{equation}\label{nond-sup}
\sup_{B_r(x_0)} v\geq c r\quad \mbox{ for } 0<r\leq \bar r_0.
\end{equation}

\medskip

We say that $v$ is uniformly nondegenerate on a set
$\Gamma\subset\Omega\cap\{v=0\}$ in the sense of
\eqref{nond-prom-bol} (resp. \eqref{nond-prom-casc},
\eqref{nond-sup}) if the constants $c$ and $\bar r_0$ in
\eqref{nond-prom-bol} (resp.  \eqref{nond-prom-casc},
\eqref{nond-sup}) can be taken independent of the point
$x_0\in\Gamma$.
\end{defi}

\begin{rema}\label{equiv-nondeg}
Assume that $v\ge 0$ is locally Lipschitz continuous in a domain
$\Omega\subset\mathbb{R}^N$, $v\in W^{1,p(\cdot)}(\Omega)$ with
$\Delta_{p(x)} v \ge  f \chi_{\{v >0\}}$, where $f\in
L^{\infty}(\Omega)$, $1<p_{\min}\le p(x)\le p_{\max}<\infty$ and
$p(x)$ is Lipschitz continuous. Then the three concepts of
nondegeneracy in Definition \ref{nondegener} are equivalent (for
the idea of the proof, see Remark 3.1 in \cite{LW1}, where the
case $p(x)\equiv 2$ and $f\equiv 0$ is treated).
\end{rema}

\end{section}


\begin{section}{Energy minimizers of energy functional \eqref{funct-J}}
\label{sect-energy-minim-ac}

\smallskip
In this section  we prove existence of minimizers of the energy functional \eqref{funct-J} and we develop an exhaustive analysis of the essential   properties
of functions $u$ which
are nonnegative local  minimizers of that energy.

We start with a definition and some related remarks

\begin{defi}\label{def-loc-min} Let $1<p_{\min}\le p(x)\le p_{\max}<\infty$, $f\in L^{\infty}(\Omega)$ and $\lambda(x)$ measurable with
$0<\lone\le\lambda(x)\le\ltwo<\infty$.
We say that $u\in  W^{1,p(\cdot)}(\Omega)$ is a local minimizer in $\Omega$ of
\begin{equation*}
 J(v)=J_{\Omega}(v)=\int_\Omega\Big(\frac{|\nabla
 v|^{p(x)}}{p(x)}+\lambda(x)\chi_{\{v>0\}}+fv\Big)\,dx
\end{equation*}
if for every $\Omega'\subset\subset\Omega$ and for every $v\in W^{1,p(\cdot)}(\Omega)$ such that $v=u$ in
$\Omega\setminus{\Omega'}$ there holds that $J(v)\ge J(u)$.
\end{defi}

\begin{rema}\label{rem-local} Let $u$ be as in Definition \ref{def-loc-min}. Let $\Omega'\subset\subset\Omega$ and $w-u\in W_0^{1,p(\cdot)}(\Omega')$. If we define
\begin{equation*}
\bar w=
\begin{cases}
w  & \mbox{in }\Omega',\\
u & \mbox{in }\Omega\setminus{\Omega'},
\end{cases}
\end{equation*}
then  $\bar w\in W^{1,p(\cdot)}(\Omega)$ and therefore $J(\bar w)\ge J(u)$. If we now let
\begin{equation*}
 J_{\Omega'}(v)=\int_{\Omega'}\Big(\frac{|\nabla
 v|^{p(x)}}{p(x)}+\lambda(x)\chi_{\{v>0\}}+fv\Big)\,dx,
\end{equation*}
it follows that $J_{\Omega'}(w)\ge J_{\Omega'}(u)$.
\end{rema}

\begin{rema}\label{rem-global} Let $J$ be as in Definition \ref{def-loc-min}. If $u\in W^{1,p(\cdot)}(\Omega)$ is a minimizer of $J$ among the functions
$v\in u+W_0^{1,p(\cdot)}(\Omega)$, then $u$ is a local minimizer of $J$ in $\Omega$.
\end{rema}

We first prove

\begin{theo}\label{existence-minimizers-AC} Assume that $1<p_{\min}\le p(x)\le p_{\max}<\infty$ with $\|\nabla
p\|_{L^{\infty}}\leq L$, $f\in L^{\infty}(\Omega)$ and $\lambda(x)$ is measurable with
$0<\lone\le\lambda(x)\le\ltwo<\infty$. Let $\phi\in W^{1,p(\cdot)}(\Omega)$ and assume that $\Omega$ is a bounded domain.
There exists
$u\in W^{1,p(\cdot)}(\Omega)$ that minimizes the energy
\begin{equation*}
J(v)=\di \int_\Omega \Big(\frac{|\nabla
 v|^{p(x)}}{p(x)}+\lambda(x)\chi_{\{v>0\}}+fv\Big)\,dx,
\end{equation*}
among functions $v\in W^{1,p(\cdot)}(\Omega)$ such that $v-\phi \in W_0^{1,p(\cdot)}(\Omega)$.
Then,  for every $\Omega'\subset\subset\Omega$ there exists
$C=C(\Omega', \|\phi\|_{1,p(\cdot)}, \|f\|_{L^\infty(\Omega)}, p_{\min}, p_{\max},\ltwo, L)$ such that
\begin{equation}\label{cotsup-AC}
\sup_{\Omega'} u\le C.
\end{equation}
\end{theo}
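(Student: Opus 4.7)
\emph{Existence.} I would apply the direct method. Let $\mathcal{A}=\{v\in W^{1,p(\cdot)}(\Omega):v-\phi\in W^{1,p(\cdot)}_0(\Omega)\}$ and take a minimizing sequence $\{u_k\}\subset\mathcal{A}$. Coercivity comes from the variable-exponent Poincar\'e inequality applied to $u_k-\phi$ together with the norm--modular inequalities recalled earlier, which yield
$$
J(u_k)\ge \tfrac{1}{p_{\max}}\min\bigl\{\|\nabla u_k\|_{p(\cdot)}^{p_{\min}},\|\nabla u_k\|_{p(\cdot)}^{p_{\max}}\bigr\}-C\|f\|_{L^\infty}\bigl(1+\|\nabla u_k\|_{p(\cdot)}\bigr),
$$
which forces $\|u_k\|_{1,p(\cdot)}$ to stay bounded. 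By reflexivity of $W^{1,p(\cdot)}(\Omega)$ (since $p_{\min}>1$) I extract $u_k\rightharpoonup u$ in $W^{1,p(\cdot)}$ and, by compact embedding, $u_k\to u$ in $L^1(\Omega)$ and a.e.\ along a subsequence. Lower semicontinuity is immediate for the gradient term (convexity of $\xi\mapsto|\xi|^{p(x)}/p(x)$), for $\int fv$ (strong $L^1$ convergence), and for $\int\lambda\chi_{\{v>0\}}$ by Fatou applied to $\chi_{\{u>0\}}\le\liminf_k\chi_{\{u_k>0\}}$. The constraint $u-\phi\in W^{1,p(\cdot)}_0(\Omega)$ passes to the weak limit, so $u$ is the desired minimizer.

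\emph{Variational identity at high levels.} For the $L^\infty$ bound, fix $B_{2r}(x_0)\subset\subset\Omega$, a cutoff $\eta\in C^\infty_0(B_{2r}(x_0))$ with $0\le\eta\le 1$ and $|\nabla\eta|\le C/r$, and a level $M>0$. Set $\varphi=(u-M)^+\eta^{p_{\max}}$; this lies in $W^{1,p(\cdot)}_0(\Omega)$ with support in $\{u\ge M\}\cap\mathrm{supp}\,\eta$. The decisive point is that $M>0$ forces $u>M>0$ on $\{\varphi>0\}$ and, since $\varphi\ge 0$, one has $\{u+s\varphi>0\}=\{u>0\}$ a.e.\ for all $|s|\le 1$; hence the jump term in $J(u+s\varphi)$ does not depend on $s$. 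Applying $J(u)\le J(u+s\varphi)$ for both signs of $s$ and differentiating at $s=0$ yields the Euler--Lagrange identity
$$
\int|\nabla u|^{p(x)-2}\nabla u\cdot\nabla\varphi\,dx=-\int f\varphi\,dx.
$$

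\emph{Caccioppoli estimate and iteration.} Expanding $\nabla\varphi=\chi_{\{u>M\}}\eta^{p_{\max}}\nabla u+p_{\max}(u-M)^+\eta^{p_{\max}-1}\nabla\eta$ in the identity above and applying Young's inequality with the $x$-dependent exponent $p(x)$ (the cutoff factors $\eta^{p_{\max}-p(x)}$ are all bounded by $1$) produces a Caccioppoli-type bound
$$
\int_{\{u>M\}}|\nabla u|^{p(x)}\eta^{p_{\max}}\,dx\le C\int (u-M)_+^{p(x)}|\nabla\eta|^{p(x)}\,dx+C\|f\|_{L^\infty}\!\!\int (u-M)_+\,\eta^{p_{\max}}\,dx,
$$
with $C=C(p_{\min},p_{\max})$. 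A standard De Giorgi iteration based on this Caccioppoli estimate and the variable-exponent Sobolev embedding (available since $p$ is Lipschitz), carried out on concentric balls $B_{r_k}(x_0)$ with $r_k\downarrow r$ and increasing levels $M_k$, yields $\sup_{B_r(x_0)}(u-M_0)^+\le C$ provided $M_0$ is sufficiently large compared with $\|(u-M_0)^+\|_{L^{p(\cdot)}(B_{2r}(x_0))}$. The a priori bound $\|u\|_{1,p(\cdot)}\le C(\|\phi\|_{1,p(\cdot)},\|f\|_{L^\infty},\ltwo,|\Omega|)$ needed to initiate the iteration follows from $J(u)\le J(\phi)$ combined with the coercivity estimate; a covering argument then gives the claimed bound on any $\Omega'\subset\subset\Omega$.

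\emph{Main obstacle.} The principal technical difficulty lies in making the Caccioppoli and De Giorgi steps work uniformly for the $p(x)$-Laplacian: Young's inequality produces cutoff factors with $x$-dependent exponents, the monotonicity inequalities \eqref{desigualdades} split into the regimes $p(x)\ge 2$ and $p(x)<2$ which must be handled separately, and the Sobolev embedding constants must be tracked explicitly in terms of $p_{\min}$, $p_{\max}$ and $L$. Keeping the iteration from picking up dependence on $\sup u$ itself requires a careful choice of the levels $M_k$ and radii $r_k$.
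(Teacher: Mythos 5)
Your existence argument is essentially the same as the paper's: direct method, coercivity via the variable-exponent Poincar\'e and H\"older inequalities together with the modular--norm relation, reflexivity of $W^{1,p(\cdot)}$, and lower semicontinuity term by term (convexity for the gradient, Fatou for the measure term, strong compactness for the linear term). No difference worth noting there.

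Your route to the local $L^\infty$ bound is correct but different from the paper's. The paper first establishes the weak differential inequality $\Delta_{p(x)}u\ge f$ in all of $\Omega$ (Lemma \ref{lemm-min-desig}) using a \emph{one-sided} variation $u-t\xi$ with $t>0$, $0\le\xi\in C_0^\infty(\Omega)$: since decreasing $u$ can only shrink $\{u>0\}$, the term $\int\lambda\chi_{\{u-t\xi>0\}}$ is monotone and can simply be dropped, and letting $t\downarrow 0$ gives the subsolution inequality. It then combines this with the uniform modular bound $\int_\Omega|u|^{p(x)}\,dx\le\bar C_1$ and cites an existing local boundedness result for subsolutions of $\Delta_{p(x)}u\ge -\|f\|_{L^\infty}$ (Proposition 2.1 in \cite{Wo}). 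You instead derive a full Euler--Lagrange \emph{identity} for test functions $(u-M)^+\eta^{p_{\max}}$ at positive levels $M>0$, exploiting that such two-sided perturbations leave $\{u>0\}$ unchanged, and then carry out the Caccioppoli/De Giorgi iteration directly. Both tricks correctly neutralize the non-smooth term $\lambda\chi_{\{v>0\}}$; the paper's one-sided variation is slightly more economical (it gives the subsolution inequality everywhere, not just at high levels, and that is all the maximum principle needs), and more importantly the paper does not redo the variable-exponent De Giorgi iteration — it cites it. If you follow your own route you would be reproving the content of Proposition 2.1 in \cite{Wo}, which is exactly where the difficulties you flag in your last paragraph (Young's inequality with $x$-dependent exponents, tracking the Sobolev embedding constants, preventing the iteration from feeding back $\sup u$) actually live; that is a legitimate but substantially longer path to the same endpoint. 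Also note that the uniform bound you need to start the iteration is the \emph{modular} bound $\int_\Omega|u|^{p(x)}\,dx\le\bar C_1$, which the paper extracts directly from \eqref{cotanorma-AC} via Proposition \ref{equi}, so nothing is lost there.
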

\begin{proof} Let us prove first that a minimizer exists. In fact, let
$$
\mathcal{K}=\Big\{v\in W^{1,p(\cdot)}(\Omega)\colon v - \phi\in W_0^{1,p(\cdot)}(\Omega)\Big\}.
$$
In order to prove that $J$ is bounded from below in $\mathcal{K}$,  we observe that if $v\in\mathcal{K}$, then
$$
J(v)\ge\frac{1}{p_{\max}}\int_\Omega |\nabla v|^{p(x)}\, dx+ \int_\Omega f v\, dx,
$$
and we have, by Theorem \ref{holder} and Theorem \ref{poinc},
\begin{align*}
\int_\Omega |f v|\, dx &\le 2\|f\|_{{p}'(\cdot)}\|v\|_{p(\cdot)}\le 2\|f\|_{{p}'(\cdot)}(\|v-\phi\|_{p(\cdot)}
+\|\phi\|_{p(\cdot)})\\
&\le C_0\|\nabla v-\nabla\phi\|_{p(\cdot)}+C_1\le  C_0\|\nabla
v\|_{p(\cdot)}+C_2.
\end{align*}
If  $\Big(\int_{\Omega} |\nabla v|^{p(x)}\, dx\Big)^{1/{p_{\min}}}\ge \Big(\int_{\Omega} |\nabla v|^{p(x)}\, dx\Big)^{1/{p_{\max}}}$ we get,
by Proposition \ref{equi},
$$
\int_\Omega |f v|\, dx \le  C_0\Big(\int_{\Omega} |\nabla
v|^{p(x)}\, dx\Big)^{1/{p_{\min}}}+C_2\le C_3 + \frac{1}{2\,
p_{\max}}\int_\Omega |\nabla v|^{p(x)}\, dx.
$$
If, on the other hand, $\Big(\int_{\Omega} |\nabla v|^{p(x)}\,
dx\Big)^{1/{p_{\min}}}< \Big(\int_{\Omega} |\nabla v|^{p(x)}\,
dx\Big)^{1/{p_{\max}}}$, we get in an analogous way
$$
\int_\Omega |f v|\, dx \le  C_0\Big(\int_{\Omega} |\nabla
v|^{p(x)}\, dx\Big)^{1/{p_{\max}}}+C_2\le  C_4 + \frac{1}{2\,
p_{\max}}\int_\Omega |\nabla v|^{p(x)}\, dx.
$$
Taking $C_5=\max\{C_3, C_4\}$, we get
\begin{equation}\label{cotaJ-AC}
J(v)\ge -C_5 + \frac{1}{2\, p_{\max}}\int_\Omega |\nabla
v|^{p(x)}\, dx\ge -C_5,
\end{equation}
which shows that $J$ is bounded from below in $\mathcal{K}$.

At this point we want to remark that the constants $C_0,...,C_5$ above can be taken depending only on
$\|\phi\|_{1,p(\cdot)}, \|f\|_{L^\infty(\Omega)}, p_{\min}, p_{\max}$ and $L$.

We now take  a minimizing sequence $\{u_n\}\subset\K$. Without loss of generality we can assume that $J(u_n)\le J(\phi)$, so
by  \eqref{cotaJ-AC},$\int_{\Omega}|\nabla u_n|^{p(x)}\le C_6$. By Proposition \ref{equi}, $\|\nabla
u_n-\nabla \phi\|_{p(\cdot)}\leq C_7$ and, as  $u_n - \phi\in W_0^{1,p(\cdot)}(\Omega)$, by Theorem \ref{poinc} we
 have $\|u_n-\phi\|_{p(\cdot)}\leq C_8$. Therefore, by Theorem
\ref{ref}
 there exist a
subsequence (that we still call $u_n$) and a function $u\in
W^{1,p(\cdot)}(\Omega)$ such that
\begin{equation}
\label{cotanorma-AC} ||u||_{W^{1,p(\cdot)}(\Omega)}\le {\bar C},\quad \mbox{ with } {\bar C}={\bar C}(\|\phi\|_{1,p(\cdot)},
\|f\|_{L^\infty(\Omega)}, p_{\min}, p_{\max},\ltwo, L),
\end{equation}
$$
u_n \rightharpoonup  u\quad \mbox{weakly in } W^{1,p(\cdot)}(\Omega),
$$
and, by Theorem \ref{imb},
\begin{align*}
&  u_n \rightharpoonup  u \quad \mbox{weakly in }
W^{1,p_{\min}}(\Omega).
\end{align*}
Now, by the compactness of the immersion
$W^{1,{p_{\min}}}(\Omega)\hookrightarrow L^{p_{\min}}(\Omega)$ we
have that, for a subsequence that we still denote by $u_n$,
\begin{align*}
u_n &\to u \quad \mbox{in }L^{p_{\min}}(\Omega),\\
u_n &\to u \quad \mbox{a.e. } \Omega.
\end{align*}

As $\K$ is convex and closed, it is weakly closed, so $u\in \K$.

It follows that
\begin{align*}
\lambda(x)\chi_{\{u>0\}}&\le\liminf_{n\to\infty} \lambda(x)\chi_{\{u_n>0\}},\\
\int_{\Omega}\lambda(x)\chi_{\{u>0\}}\, dx &\le \liminf_{n\to\infty} \int_{\Omega} \lambda(x)\chi_{\{u_n>0\}}\, dx, \\
\lim_{n\to\infty} \int_{\Omega} f u_n\, dx &=  \int_{\Omega} f u\, dx,\\
 \int_{\Omega} \frac{|\nabla u|^{p(x)}}{p(x)}\, dx &\le \liminf_{n\to\infty}
\int_{\Omega}\frac{|\nabla u_n|^{p(x)}}{p(x)}\, dx.
\end{align*}
In order to prove the last inequality we observe that there holds
\begin{equation}\label{G-AC}
\int_\Omega \frac{|\nabla u_n|^{p(x)}}{p(x)}\,dx\ge\int_\Omega \frac{|\nabla u|^{p(x)}}{p(x)}\,dx
+\int_\Omega |\nabla u|^{p(x)-2}\nabla u\cdot(\nabla u_n-\nabla u)\,dx.
\end{equation}

Recall that $\nabla u_n$ converges weakly to $\nabla u$ in
$L^{p(\cdot)}(\Omega)$. Now, since  $|\nabla u|^{p(x)-1}\in L^{p'(\cdot)}(\Omega)$, by
Theorem \ref{ref} and passing to the limit in \eqref{G-AC} we get
$$
\liminf_{n\to\infty}\int_\Omega \frac{|\nabla u_n|^{p(x)}}{p(x)}\,dx\ge
\int_\Omega \frac{|\nabla u|^{p(x)}}{p(x)}\,dx.
$$

Hence
$$
J(u)\le \liminf_{n\to\infty}J(u_n) =
\inf_{v\in\K} J(v).
$$
Therefore, $u$ is a minimizer of $J$ in $\K$.

Finally, in order to prove \eqref{cotsup-AC}, we observe that, from Proposition \ref{equi} and estimate \eqref{cotanorma-AC},
we have that $\int_{\Omega}|u|^{p(x)}\, dx \le {\bar C_1}(\|\phi\|_{1,p(\cdot)}, \|f\|_{L^\infty(\Omega)}, p_{\min}, p_{\max},\ltwo, L)$. Thus, the desired estimate
follows from the application of Proposition 2.1 in \cite{Wo}, since, by Lemma \ref{lemm-min-desig}, $\Delta_{p(x)} u\ge f \ge -\|f\|_{L^\infty(\Omega)}$ in $\Omega$.
\end{proof}

{}For local minimizers we first have

\begin{lemm}\label{lemm-min-desig} Let $p, f$ and $\lambda$ be as in Theorem \ref{existence-minimizers-AC}.
 Let $u\in  W^{1,p(\cdot)}(\Omega)$  be a  local minimizer of
\begin{equation*}
 J(v)=\int_\Omega\Big(\frac{|\nabla
 v|^{p(x)}}{p(x)}+\lambda(x)\chi_{\{v>0\}}+fv\Big)\,dx.
\end{equation*}
 Then
 \begin{equation}\label{min-desig}
\Delta_{p(x)} u  \ge f \quad \mbox{ in }\Omega.
\end{equation}
\end{lemm}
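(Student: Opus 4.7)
The plan is to use the classical one-sided variation argument. Fix a nonnegative test function $\varphi\in C_0^\infty(\Omega)$, pick a subdomain $\Omega'\subset\subset\Omega$ containing $\operatorname{supp}\varphi$, and for $t>0$ consider the competitor
\[
v_t = u - t\varphi.
\]
Since $\varphi\ge 0$ and $t>0$, we have $v_t\le u$ pointwise, so $\{v_t>0\}\subset\{u>0\}$ and therefore
\[
\lambda(x)\chi_{\{v_t>0\}} \le \lambda(x)\chi_{\{u>0\}}\quad\text{a.e. in }\Omega.
\]
By Remark \ref{rem-local}, $J_{\Omega'}(u)\le J_{\Omega'}(v_t)$, and cancelling the $\lambda$-terms using the inequality above yields
\[
\int_{\Omega'}\frac{|\nabla u|^{p(x)}}{p(x)}\,dx + \int_{\Omega'} fu\,dx \;\le\; \int_{\Omega'}\frac{|\nabla(u-t\varphi)|^{p(x)}}{p(x)}\,dx + \int_{\Omega'} f(u-t\varphi)\,dx.
\]

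Next I would rearrange and divide by $t>0$ to obtain
\[
\int_{\Omega'}\frac{|\nabla(u-t\varphi)|^{p(x)}-|\nabla u|^{p(x)}}{t\,p(x)}\,dx \;\ge\; \int_{\Omega'} f\varphi\,dx.
\]
The key step is then to pass to the limit $t\to 0^+$. For each fixed $x$, the map $t\mapsto |\nabla u(x)-t\nabla\varphi(x)|^{p(x)}$ is convex, so the difference quotient on the left is monotone nondecreasing as $t\downarrow 0$ and converges to $-|\nabla u|^{p(x)-2}\nabla u\cdot\nabla\varphi$. Together with the integrability $|\nabla u|^{p(x)-1}|\nabla\varphi|\in L^1(\Omega')$ (from $u\in W^{1,p(\cdot)}(\Omega)$, $\varphi\in C_0^\infty$, and H\"older in variable exponent spaces), the monotone convergence theorem (applied to a suitable shifted sequence to handle signs) gives
\[
-\int_{\Omega'}|\nabla u|^{p(x)-2}\nabla u\cdot\nabla\varphi\,dx \;\ge\; \int_{\Omega'} f\varphi\,dx
\]
for every nonnegative $\varphi\in C_0^\infty(\Omega)$. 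This is exactly the statement $\Delta_{p(x)}u\ge f$ in $\Omega$ in the distributional sense.

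The only nontrivial step is the limit passage; the main obstacle is ensuring an integrable dominating/monotone structure for the difference quotient uniformly in $t$, which is particularly delicate when $1<p(x)<2$. Convexity of $\xi\mapsto |\xi|^{p(x)}$ in $\xi$ (hence of the one-dimensional slice $t\mapsto |\nabla u-t\nabla\varphi|^{p(x)}$) bypasses this difficulty: the difference quotient is monotone in $t$ and its pointwise limit is locally integrable, so monotone convergence applies directly without needing a separate dominating function.
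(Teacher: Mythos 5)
Your proposal is correct and follows essentially the same route as the paper: a one-sided first variation with the competitor $u-t\varphi$, $t>0$, $\varphi\ge 0$, discarding the $\lambda$-term since $\chi_{\{u-t\varphi>0\}}\le\chi_{\{u>0\}}$, and then passing to the limit $t\to 0^+$. The only (minor) difference is the justification of the limit: the paper applies the pointwise convexity inequality $\frac{|a|^p-|b|^p}{p}\le |a|^{p-2}a\cdot(a-b)$ to bound the difference quotient by $-|\nabla u-t\nabla\varphi|^{p(x)-2}(\nabla u-t\nabla\varphi)\cdot\nabla\varphi$ and then uses dominated convergence, whereas you exploit the monotonicity in $t$ of the convex difference quotient together with monotone convergence; both are standard and equivalent in effect, and your wording ``monotone nondecreasing as $t\downarrow 0$'' should read that the difference quotient is nondecreasing in $t$, hence decreases to $-|\nabla u|^{p(x)-2}\nabla u\cdot\nabla\varphi$ as $t\downarrow 0$.
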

\begin{proof}
In fact, let $t>0$ and $0\le\xi\in C^{\infty}_0(\Omega)$. Using
the minimality of $u$  we have
\begin{align*}0 &\leq \frac{1}{t} (J(u-t
\xi)-J(u)) \le \frac{1}{t} \int_{\Omega}\Big(\frac{|\nabla u-t
\nabla \xi|^{p(x)}}{p(x)}-\frac{|\nabla u|^{p(x)}}{p(x)}\Big)\, dx
\,
-\int_{\Omega}f\xi\, dx\\
& \leq -\int_{\Omega}
 |\nabla u-t \nabla \xi|^{p(x)-2}(\nabla u-t \nabla
\xi)\cdot \nabla \xi\, dx -\int_{\Omega}f\xi\, dx
\end{align*}
and if we take $t\rightarrow 0$, we obtain
\begin{equation}\label{desig}
0\leq -\int_{\Omega} |\nabla u|^{p(x)-2}\nabla u\cdot\nabla \xi\,
dx-\int_{\Omega}f\xi\, dx,
\end{equation}
which gives \eqref{min-desig}.
 \end{proof}

\begin{rema}\label{rema-signo-AC} We are interested in studying the behavior of nonnegative local minimizers of the energy functional
\eqref{funct-J}.

If $u$ is as in Theorem \ref{existence-minimizers-AC} and we have, for instance, $\phi\ge 0$ in $\Omega$  and $f\le 0$ in $\Omega$, then we have $u\ge 0$ in $\Omega$. In fact, the result follows by observing that $\xi=\min (u,0)\in W_0^{1,p(\cdot)}(\Omega)$ so, for every $0<t<1$,
$u-t\xi\in\phi + W_0^{1,p(\cdot)}(\Omega)$, with $\chi_{\{u-t\xi>0\}}=\chi_{\{u>0\}}$. Then, in a similar way as in
Lemma \ref{lemm-min-desig}, we get \eqref{desig} and using that $f\le 0$ we obtain $\int_{\Omega} |\nabla \xi|^{p(x)}\,dx=0$, which implies $u\ge 0$ in $\Omega$.

On the other hand, if $u$ is any local minimizer of \eqref{funct-J}, the same argument employed in Theorem \ref{existence-minimizers-AC} gives
$\sup_{\Omega'} u\le C_{\Omega'}$, for any $\Omega'\subset\subset\Omega$. Therefore, if $u$ is any nonnegative local minimizer of \eqref{funct-J}, then $u\in L_{\rm loc}^{\infty}(\Omega)$.
\end{rema}

\medskip

{}From now on we will deal with nonnegative local minimizers. Next we will prove that they are locally
Lipschitz continuous.

First we need

\begin{lemm}\label{ppio-max-con-a} Let $p$ and  $f$ be as in Theorem \ref{existence-minimizers-AC}.
Let $\Omega\subset (0,d)\times\mathbb{R}^{N-1}$ be a bounded
domain. Assume
$a\in L^{\infty}(\Omega)$, $a(x)\ge a_0>0$, with $\|\nabla
a\|_{L^{\infty}}\leq L_1$.
 Let $u\in  W^{1,p(\cdot)}(\Omega)$  be a solution to ${\rm
div}\big(a(x)|\nabla u|^{{ p}(x)-2}\nabla u\big)={f}$ in $\Omega$
with $|u|\le M$ on $\partial\Omega$. Assume moreover that $Ld < p_{\min}-1$.

Then, there exists $C=C(M,p_{\min},
||f||_{L^{\infty}(\Omega)}, d, a_0, L, L_1)$ such  that $|u|\le C$ in $\Omega$.
\end{lemm}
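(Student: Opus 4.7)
\medskip

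\noindent\textbf{Proof plan.} The strategy is a classical comparison argument: by the strict monotonicity of $A(x,\xi)=a(x)|\xi|^{p(x)-2}\xi$ noted after \eqref{desigualdades}, the comparison principle holds on bounded domains for the operator $\mathcal{L}w:=\mathrm{div}(a(x)|\nabla w|^{p(x)-2}\nabla w)$. It therefore suffices to exhibit a supersolution $w$ satisfying $\mathcal{L}w\le -\|f\|_{L^{\infty}(\Omega)}$ in $\Omega$ and $w\ge M$ on $\partial\Omega$ in order to get $u\le w$ in $\Omega$; applying the same construction to $-u$ (which solves $\mathcal{L}(-u)=-f$ with the same bound on $\partial\Omega$) yields the lower bound.

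\medskip

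\noindent To produce the supersolution I would try the one-dimensional barrier
\begin{equation*}
w(x)=M+A\bigl(e^{k(d-x_1)}-1\bigr),
\end{equation*}
where $A,k>0$ are chosen below. Clearly $w\ge M$ in $\overline{\Omega}$ since $0\le x_1\le d$, so the boundary inequality $w\ge u$ holds. A direct computation with $\nabla w=-Ak\,e^{k(d-x_1)}\vec e_1$ gives
\begin{equation*}
\mathcal{L}w=-a(x)\,(Ak)^{p(x)-1}e^{(p(x)-1)k(d-x_1)}\Bigl[\,(p(x)-1)k-\partial_{1}p\cdot k(d-x_1)-\partial_{1}p\,\log(Ak)-\tfrac{\partial_{1}a}{a}\Bigr].
\end{equation*}
Since $|\partial_{1}p|\le L$, $0\le d-x_1\le d$, $|\partial_{1}a|\le L_1$ and $a\ge a_0$, the bracket is bounded below by
\begin{equation*}
k\bigl(p_{\min}-1-Ld\bigr)-L\,|\log(Ak)|-\tfrac{L_1}{a_0}.
\end{equation*}
The structural hypothesis $Ld<p_{\min}-1$ makes the coefficient of $k$ strictly positive, so I would first fix $A$ (e.g.\ $A=1$) and then take $k$ large, depending only on $p_{\min},L,L_1,a_0,d$, so that the bracket is at least some positive constant and moreover
\begin{equation*}
a_0\,(Ak)^{p_{\min}-1}\,\bigl[\,k(p_{\min}-1-Ld)-L\log(Ak)-\tfrac{L_1}{a_0}\bigr]\ge \|f\|_{L^{\infty}(\Omega)}.
\end{equation*}
With this choice, $\mathcal{L}w\le -\|f\|_{L^{\infty}(\Omega)}\le f$ pointwise, and the comparison principle yields $u\le w\le M+A(e^{kd}-1)=:C$ in $\Omega$, with $C$ depending exactly on the parameters listed in the statement.

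\medskip

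\noindent The main obstacle is the control of the nonlinear term $(Ak)^{p(x)-1}e^{(p(x)-1)k(d-x_1)}$: one needs to be careful that, when $Ak$ is fixed large, the factor $\partial_{1}p\,\log(Ak)$ in the bracket does not overwhelm the gain $k(p_{\min}-1-Ld)$. Once $A$ is frozen and $k$ is chosen linear in $\|f\|_{L^\infty}$-dependent quantities, this is routine, but tracking the precise dependence of all constants on $(M,p_{\min},\|f\|_{L^\infty},d,a_0,L,L_1)$ is the technical point that requires care. The opposite one-sided bound is identical: apply the same argument to $-u$, which satisfies $\mathcal{L}(-u)=-f$ and $|-u|\le M$ on $\partial\Omega$, producing the symmetric barrier and giving $u\ge -C$ in $\Omega$.
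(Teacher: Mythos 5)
Your overall strategy (explicit one-dimensional barrier plus the comparison principle for the strictly monotone operator $A(x,\xi)=a(x)|\xi|^{p(x)-2}\xi$) is exactly the route the paper takes. However, the specific barrier you chose does not work, and the reason is a sign error in your divergence computation that hides the problem.

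Your candidate $w(x)=M+A\bigl(e^{k(d-x_1)}-1\bigr)$ is a \emph{convex} function of $x_1$ (it is $M-A$ plus a convex exponential), so even in the model case $a\equiv1$, $p\equiv2$ one has
\begin{equation*}
\mathcal{L}w=\Delta w=w_{x_1x_1}=A k^2 e^{k(d-x_1)}>0,
\end{equation*}
whereas a supersolution needs $\mathcal{L}w\le -\|f\|_{L^\infty}$. Your displayed formula for $\mathcal{L}w$ carries a spurious overall minus sign; carrying the computation through correctly one finds
\begin{equation*}
\mathcal{L}w=a(x)\,\bigl(Ak\,e^{k(d-x_1)}\bigr)^{p(x)-1}\Bigl[(p(x)-1)k-\partial_1 p\,\log\bigl(Ak\,e^{k(d-x_1)}\bigr)-\tfrac{\partial_1 a}{a}\Bigr],
\end{equation*}
and the bracket tends to $+\infty$ as $k\to\infty$, so $\mathcal{L}w$ is large and positive. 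Your $w$ is therefore a subsolution, not a supersolution, and since also $w\ge M\ge u$ on $\partial\Omega$ it is useless as a lower barrier too; the same obstruction reappears for $-w$. So the argument cannot be closed with this choice of barrier.

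The fix is to use a \emph{concave} decreasing barrier. The paper takes $w(x)=M+e^{\alpha d}-e^{\alpha x_1}$, for which $\Delta w=-\alpha^2 e^{\alpha x_1}<0$ and, after the same expansion,
\begin{equation*}
\mathcal{L}w=a(x)\,(\alpha e^{\alpha x_1})^{p(x)-1}\Bigl[-(p(x)-1)\alpha-\partial_1 p\,\log(\alpha e^{\alpha x_1})-\tfrac{\partial_1 a}{a}\Bigr],
\end{equation*}
whose bracket, thanks to $Ld<p_{\min}-1$, is $\le\bigl(-(p_{\min}-1)+Ld\bigr)\alpha+L\log\alpha+L_1/a_0\to-\infty$ as $\alpha\to\infty$. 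Choosing $\alpha$ large (depending only on $p_{\min},d,a_0,L,L_1,\|f\|_{L^\infty}$) gives $\mathcal{L}w\le-\|f\|_{L^\infty}$, and then the comparison argument you outline goes through and yields $|u|\le M+e^{\alpha d}$. If you replace your barrier by this (or by any concave decreasing profile such as $M+A(e^{kd}-e^{kx_1})$) the rest of your proposal is correct and matches the paper.
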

\begin{proof}
We consider, for $\alpha>1$, the function $w(x)=M+e^{\alpha d}-e^{\alpha x_1}$.
Computing, we have
\begin{align*}
w_{x_i}=-\alpha e^{\alpha x_1}\delta_{i1},\quad  w_{x_i x_j}= -{\alpha}^2 e^{\alpha x_1}\delta_{i1}\delta_{j1},\quad  |\nabla w|=\alpha e^{\alpha x_1}.
\end{align*}
Therefore  we obtain
\begin{align*} {\rm div}&\big(a(x)|\nabla w|^{{ p}(x)-2}\nabla w\big)\\
&=|\nabla w|^{p(x)-2}
\Big[a(x)\Delta w+a(x)\langle \nabla w,\nabla
p\rangle \log |\nabla w|+a(x)\frac{(p(x)-2)}{|\nabla w|^2}\sum_{i,j}
w_{x_i}w_{x_j}w_{x_ix_j}+ \langle \nabla w,\nabla
a\rangle \Big]\\
&=a(x)(\alpha e^{\alpha x_1})^{p(x)-1}\Big[-(p(x)-1)\alpha-p_{x_1}(x)\log(\alpha e^{\alpha x_1})-\frac{a_{x_1}(x)}{a(x)}\Big]\\
&\le a(x)(\alpha e^{\alpha x_1})^{p(x)-1}\Big[-(p_{\min}-1)\alpha + L\log\alpha +L\alpha x_1 + \frac{|a_{x_1}(x)|}{a(x)}\Big]\\
&\le a(x)(\alpha e^{\alpha x_1})^{p(x)-1}\Big[\big(-(p_{\min}-1) + Ld \big)\alpha+ L\log\alpha + \frac{L_1}{a_0}\Big].
\end{align*}
If we let $\alpha\ge\alpha_0=\alpha_0(p_{\min},d, a_0, L, L_1)$ so that $\big(-(p_{\min}-1) + Ld \big)\alpha+ L\log\alpha + \frac{L_1}{a_0}<0$, we get
\begin{align*} {\rm div}&\big(a(x)|\nabla w|^{{ p}(x)-2}\nabla w\big)\\
&\le a_0 {\alpha}^{p_{\min}-1}\Big[\big(-(p_{\min}-1) + Ld \big)\alpha+ L\log\alpha  + \frac{L_1}{a_0}\Big]\\
&\le -||f||_{L^{\infty}(\Omega)},
\end{align*}
where the last inequality holds if we choose $\alpha\ge\alpha_1=\alpha_1(||f||_{L^{\infty}(\Omega)},p_{\min},d, a_0, L, L_1)$.

It follows that for $\alpha=\max\{\alpha_0, \alpha_1, 1\}$ the corresponding function $w$ satisfies
$$
{\rm div}\big(a(x)|\nabla w|^{{ p}(x)-2}\nabla w\big)\le -||f||_{L^{\infty}(\Omega)}\le \pm f\quad{\rm in}\quad\Omega.
$$
Since $\pm u\le w$ on $\partial\Omega$, we get $\pm u\le w\le M+
e^{\alpha d}$ in $\Omega$. This concludes the proof.
\end{proof}

\begin{rema}\label{ppio-max-gral} Let $u$ be as in Lemma \ref{ppio-max-con-a} in a domain $\Omega\subset (-r,r)\times\mathbb{R}^{N-1}$.
Then, defining $\bar u(x)=u(x-re_1)$, $\bar a(x)=a(x-re_1)$, $\bar p(x)=p(x-re_1)$, $\bar f(x)=f(x-re_1)$ and $\bar \Omega=\Omega+re_1$, we have
${\rm
div}\big(\bar a(x)|\nabla \bar u|^{{ \bar p}(x)-2}\nabla \bar u\big)={\bar f}$ in $\bar\Omega$. Then, the invariance by translations of the problem allows us to apply Lemma \ref{ppio-max-con-a}  to $\bar u$ and conclude that, if  $L2r < p_{\min}-1$, then $|u|\le C$ in $\Omega$, for a constant
$C=C(M,p_{\min},||f||_{L^{\infty}(\Omega)}, r, a_0, L, L_1)$.
\end{rema}

Next, we prove that nonnegative local minimizers ---of a more general functional than \eqref{funct-J}---   are
locally H\"older continuous.

\begin{theo}\label{teo-holder} Let $p, f$ and $\lambda$ be as in Theorem \ref{existence-minimizers-AC}.
Assume that  $0<a_0\le a(x)\le a_1<\infty$, with $\|\nabla a\|_{L^{\infty}}\leq L_1$.
 Let  $u\in W^{1,p(\cdot)}(\Omega)\cap L^{\infty}(\Omega)$  be a nonnegative local
minimizer of
\begin{equation*}
 J^{a}(v)=\int_\Omega\Big(a(x)\frac{|\nabla
 v|^{p(x)}}{p(x)}+\lambda(x)\chi_{\{v>0\}}+fv\Big)\,dx
\end{equation*}
and let $B_{\hat r_0}(x_0)\subset \Omega$. Then, there exist
$0<\gamma<1$ and $0<\hat\rho_0< \hat r_0$,
$\hat\rho_0=\hat\rho_0(\hat r_0, N, p_{\min}, L)$ and
$\gamma=\gamma( N, p_{\min})$,   such that $u\in
C^{\gamma}(\overline{B_{\hat\rho_0}(x_0)})$. Moreover,
$\|u\|_{C^{\gamma}(\overline{B_{\hat\rho_0}(x_0)})}\leq C$ with
$C$ depending only on $N$, $\hat r_0$, $p_{\min}$, $p_{\max}$,
$L$, $\ltwo$, $\|u\|_{L^{\infty}(B_{\hat r_0}(x_0))}$,
$\|f\|_{L^{\infty}(B_{\hat r_0}(x_0))}$, $a_0$, $a_1$ and $L_1$.
\end{theo}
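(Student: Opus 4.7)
The plan is to establish an oscillation decay for $u$ on small concentric balls by comparing $u$ with the solution $h$ of the associated $p(x)$-Laplacian equation sharing the boundary values of $u$, and then to iterate this decay to obtain the Hölder estimate. The competitor $h$ need not be nonnegative, but Definition \ref{def-loc-min} permits arbitrary admissible competitors in $W^{1,p(\cdot)}$, so we may use $h$ directly.

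Fix $B_r(x_0)\subset B_{\hat r_0}(x_0)$ with $2Lr < p_{\min}-1$; this constraint forces $\hat\rho_0$ to depend on $\hat r_0$, $p_{\min}$ and $L$ as in the statement. Let $h$ be the unique solution of $\mbox{div}(a(x)|\nabla h|^{p(x)-2}\nabla h) = f$ in $B_r(x_0)$ with $h - u \in W_0^{1,p(\cdot)}(B_r(x_0))$, whose existence and uniqueness follow from the strict monotonicity implied by \eqref{desigualdades}. By Lemma \ref{ppio-max-con-a} together with Remark \ref{ppio-max-gral}, $h$ enjoys a uniform $L^\infty$ bound depending only on the allowed parameters. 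Testing the minimality of $u$ against $h$, and using $|\lambda\chi_{\{h>0\}}-\lambda\chi_{\{u>0\}}|\leq\ltwo$, gives
\[
\int_{B_r} a(x)\frac{|\nabla u|^{p(x)}}{p(x)}\,dx - \int_{B_r} a(x)\frac{|\nabla h|^{p(x)}}{p(x)}\,dx \leq \ltwo|B_r| + \|f\|_{L^\infty}\int_{B_r}|h-u|\,dx.
\]
Invoking the convexity inequalities \eqref{desigualdades} separately in the super- and sub-quadratic regimes, and using $u-h\in W_0^{1,p(\cdot)}(B_r)$ as a test function in the equation for $h$, the left-hand side is bounded below by $c\int_{B_r} a(x)|\nabla(u-h)|^{p(x)}\,dx$ modulo an $\int f(u-h)$ term that is absorbed via the Poincaré inequality (Theorem \ref{poinc}) and Young's inequality. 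This yields the key modular energy estimate
\[
\int_{B_r(x_0)} |\nabla(u-h)|^{p(x)}\,dx \leq C r^N.
\]

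Combining this with standard interior $C^{0,\alpha}$ regularity for $h$, which solves a $p(x)$-Laplacian-type equation with bounded right-hand side and Lipschitz exponent, a Campanato-type argument gives an oscillation decay of the form
\[
\operatorname*{osc}_{B_{\tau r}(x_0)} u \leq \theta\,\operatorname*{osc}_{B_r(x_0)} u + C r^{\delta},
\]
for some $\tau,\theta\in(0,1)$ and $\delta>0$ depending only on the admissible parameters. Iterating at dyadic scales $\tau^k r$ yields the asserted $C^{0,\gamma}$ bound with $\gamma=\gamma(N,p_{\min})$.

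The main obstacle is the passage from the modular energy estimate to pointwise oscillation control in the variable-exponent setting. Since the constants in the Poincaré--Sobolev inequalities on $B_r$ depend on $p_+(B_r)-p_-(B_r)$, one must exploit the Lipschitz continuity of $p$ to reduce to a nearly constant exponent regime on $B_r$, and to pass between modular and Luxemburg norms via Proposition \ref{equi}; this requires keeping careful track of whether the modular of $\nabla(u-h)$ is above or below $1$. The condition $2Lr<p_{\min}-1$ from Lemma \ref{ppio-max-con-a} is precisely what makes these perturbative estimates close with constants of the stated form, and it is this restriction that forces $\hat\rho_0$ to depend on $\hat r_0$, $N$, $p_{\min}$ and $L$.
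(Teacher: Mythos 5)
Your proposal follows the same broad strategy as the paper: compare $u$ with the $p(x)$-harmonic replacement $h$ (the paper's $v$), derive an energy estimate from the minimality of $u$, and conclude H\"older continuity. You also correctly identify the role of Lemma \ref{ppio-max-con-a} and the constraint $2Lr<p_{\min}-1$. However, there is a genuine gap in the central step.

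The claimed ``key modular energy estimate'' $\int_{B_r}|\nabla(u-h)|^{p(x)}\,dx\le Cr^N$ does not follow from the convexity inequalities \eqref{desigualdades} when $p(x)<2$. In the sub-quadratic regime those inequalities only yield
\begin{equation*}
\int_{\{p<2\}\cap B_r}|\nabla u-\nabla h|^2\bigl(|\nabla u|+|\nabla h|\bigr)^{p(x)-2}\,dx\le Cr^N,
\end{equation*}
and converting this to a bound on $\int|\nabla(u-h)|^{p(x)}$ by Young or H\"older requires control of $\int(|\nabla u|+|\nabla h|)^{p(x)}$, which a priori is only $O(1)$, not $O(r^N)$; the interpolation then produces at best something of order $(r^N)^{p_-/2}$, which is strictly weaker than $Cr^N$ when $p_-<2$. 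This is exactly why the paper does \emph{not} assert your modular estimate. Instead it works at two radii, taking $\rho=r^{1+\epsilon}$ and applying Young's inequality on $B_\rho$ to absorb the extra factor, and --- crucially --- it rescales $h$ and invokes Fan's $C^{1,\alpha}$ result to obtain the interior gradient bound $\sup_{B_{r/2}}|\nabla h|\le CM/r$. Your sketch omits this gradient bound entirely, yet it is what makes the term $\int_{B_\rho}|\nabla h|^{p(x)}$ controllable (of order $\rho^N r^{-p_+}$) and allows the two-scale algebra to close. The paper then arrives at a Morrey-type decay $\bigl(\fint_{B_\rho}|\nabla u|^{p_-}\bigr)^{1/p_-}\le C\rho^{\gamma-1}$ and applies Morrey's theorem, rather than iterating an oscillation decay.

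In short: your outline is correct in spirit but breaks down precisely in the range $1<p(x)<2$, which the authors single out as one of the novel aspects of Theorem \ref{pre-lip} and Corollary \ref{Lip}. A corrected Campanato route would still need (i) the interior $L^\infty$-gradient bound on $h$ via rescaling and Fan's theorem, and (ii) the two-scale $\rho=r^{1+\epsilon}$ device (or an equivalent bootstrap) to obtain a genuine power-gain in $r$ in the sub-quadratic region.
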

\begin{proof}
We will prove that  there exist $0<\gamma<1$  and $0<\rho_0< r_0<
\hat r_0$ such that, if $B_{r_0}(y)\subset B_{\hat r_0}(x_0)$ and
$\rho\leq \rho_0$,   then
\begin{equation}\label{algosale} \Big(\pint_{B_\rho(y)}|\nabla
u|^{p_-}\,dx\Big)^{1/{p_-}}\le C \rho^{\gamma-1},
\end{equation}
where $p_-=p_-(B_{r_0}(y))$. Without loss of generality we will
assume that $y=0$.

In fact, let  $0<r_0\le \min\{\frac{\hat r_0}{2}, 1\}$, $0<r\leq
r_0$ and $v$ the solution of
\begin{equation}\label{ecu-v}
{\rm div}\big(a(x)|\nabla v|^{{ p}(x)-2}\nabla v\big)=f \quad\mbox{in }B_r, \qquad v-u\in
W_0^{1,p(\cdot)}(B_r).\end{equation}

If $r_0\le \frac{1}{4L}(p_{\min}-1)$, it follows from Lemma \ref{ppio-max-con-a} and Remark \ref{ppio-max-gral} that
\begin{equation}\label{cota-v}
||v||_{L^{\infty}(B_{r})}\le \bar C\ \quad \mbox{ with } \quad\bar
C=\bar C(L, p_{\min},  \|u\|_{L^{\infty}(B_{\hat
r_0}(x_0))}, \|f\|_{L^{\infty}(B_{\hat r_0}(x_0))}, a_0, L_1).
\end{equation}

Let $u^s(x)=s u(x)+(1-s) v(x)$. By using \eqref{ecu-v} and the
inequalities in \eqref{desigualdades}, we get
\begin{equation}\label{standard-hold}\begin{aligned}
&\int_{B_{r}} a(x)\frac{|\nabla u|^{p(x)}}{p(x)}- a(x)\frac{|\nabla v|^{p(x)}}{p(x)}+ \int_{B_{r}}{f}({u}-v)=\\
&\quad\int_0^1 \frac{ds}{s}\int_{B_{r}}a(x)\Big(|\nabla u^s|^{{{
p}(x)}-2}\nabla u^s-
|\nabla v|^{{{p}(x)}-2}\nabla v\Big) \cdot \nabla(u^s-v)\ge\\
&\quad\quad C\Big(\int_{B_{r}\cap\{{{p}}\geq 2\} }a(x) |\nabla {
u}-\nabla v|^{{p}(x)} +\int_{B_{r}\cap\{{{p}}<2\} }a(x) |\nabla {
u}-\nabla v|^2\Big(|\nabla {u}|+|\nabla v|\Big)^{{{
p}(x)}-2}\Big),
\end{aligned}
\end{equation}
where $C=C(p_{\min},p_{\max},N)$.

Therefore,  by the minimality of $u$, we have (if
$A_1=B_r\cap\{p(x)<2\}$ and $A_2=B_r\cap\{p(x)\geq 2\}$)
\begin{align}\label{rn2}&\int_{ A_2} |\nabla u-\nabla v|^{p(x)}  \, dx\leq
C r^N,\\\label{rn3} &\int_{A_1}|\nabla u-\nabla v|^2(|\nabla
u|+|\nabla v|)^{p(x)-2} \, dx\leq C r^N,
\end{align}
where $C=C(p_{\min},p_{\max},N, \ltwo, a_0)$.

 Let $\ep>0$. Take
$\rho=r^{1+\ep}$  and suppose that $r^{\ep}\leq 1/2$. Take
$0<\eta<1$ to be chosen later. Then, by Young's inequality, the
definition of $A_1$ and \eqref{rn3}, we obtain
\begin{equation}\label{rn4}
\begin{aligned}
\int_{A_1\cap B_{\rho}} |\nabla u-\nabla v|^{p(x)}\, dx \leq&
\frac{C}{{\eta}^{2/{p_{\min}}}}\int_{A_1\cap B_{r}}(|\nabla
u|+|\nabla v|)^{p(x)-2}|\nabla u-\nabla v|^2 \,
dx\\
&+ C\eta\int_{ B_{\rho}\cap A_1} (|\nabla u|+|\nabla v|)^{p(x)}\,
dx\\
\leq& \frac{C}{{\eta}^{2/{p_{\min}}}}  r^{N}+ C\eta\int_{
B_{\rho}\cap A_1} (|\nabla u|+|\nabla v|)^{p(x)}\, dx.
\end{aligned}
\end{equation}
Therefore, by \eqref{rn2} and \eqref{rn4}, we get
\begin{equation}\label{rn51}\int_{B_{\rho}}|\nabla u-\nabla v|^{p(x)}\, dx
 \leq \frac{C}{{\eta}^{2/{p_{\min}}}} r^N+C\eta\int_{B_{\rho}\cap A_1}(|\nabla u|+|\nabla v|)^{p(x)}\, dx,\end{equation}
where $C=C(p_{\min},p_{\max},N, \ltwo, a_0)$.

Since, $|\nabla u|^q\leq C(|\nabla u-\nabla v|^q+|\nabla v|)^q)$,
for any $q>1$, with $C=C(q)$, we have, by \eqref{rn51}, choosing
$\eta$ small, that
\begin{equation}\label{otra}\int_{B_{\rho}} |\nabla u|^{p(x)}\, dx
 \leq {C} r^N+C\int_{B_{\rho}} |\nabla v|^{p(x)}\, dx,\end{equation}
where $C=C(p_{\min},p_{\max},N, \ltwo, a_0)$.

Now let $M\ge 1$ such that $||v||_{L^{\infty}(B_{r})}\le M$ and
define
$$
w(x) =\frac{v(rx)}{M} \quad {\rm in} \quad B_1.
$$
Then, there holds that
\begin{equation*}
{\rm div}\big(\bar a(x)|\nabla w|^{{ \bar p}(x)-2}\nabla w\big)
={\left(\frac{r}{M}\right)}^{p(rx)-1}rf(rx)+r\log\left(\frac{r}{M}\right)a(rx)\nabla p(rx)\cdot \nabla w(x)|\nabla w(x)|^{p(rx)-2}
\end{equation*}
in $B_1$, with $\bar p(x)=p(rx)$ and $\bar a(x)=a(rx)$. That is,
\begin{equation*}
{\rm div}\big(\bar a(x)|\nabla w|^{{ \bar p}(x)-2}\nabla w\big)=B(x,\nabla w(x))
\quad\mbox{in }B_1,
\end{equation*}
with
\begin{equation*}
|B(x,\nabla w(x))|\le C \left(1+|\nabla w(x)|^{\bar p(x)}\right)
\quad\mbox{in }B_1,
\end{equation*}
where  $C=C(L, M, \|f\|_{L^{\infty}(B_{\hat r_0}(x_0))}, a_1)$.

{ }From Theorem 1.1 in \cite{Fan}, it follows that $w\in
C^{1,\alpha}_{\rm loc}(B_1)$ for some $0<\alpha<1$ and that
\begin{equation*}
\sup_{B_{1/2}} |\nabla w|\le C(L, M, \|f\|_{L^{\infty}(B_{\hat r_0}(x_0))},p_{\min},p_{\max}, N, a_0, a_1, L_1),
\end{equation*}
which implies
\begin{equation}\label{cota-gv}
\sup_{B_{r/2}} |\nabla v|\le \frac{CM}{r}.
\end{equation}

Therefore, from \eqref{otra} and \eqref{cota-gv}, we deduce that
\begin{equation}\label{otra2}
\int_{B_{\rho}} |\nabla u|^{p(x)}\, dx
 \leq C r^N + C{\rho}^N r^{-p_+},
\end{equation}
with $p_+=p_+(B_{r_0})$ and  $C=C(L, \|u\|_{L^{\infty}(B_{\hat
r_0}(x_0))}, \|f\|_{L^{\infty}(B_{\hat r_0}(x_0))},\ltwo, p_{\min},p_{\max}, N, a_0, a_1, L_1)$. Here we have used the bound in
\eqref{cota-v}.

Then, if we take $\ep\le\frac{p_{\min}}{N}$, we have by
\eqref{otra2} and by our election of $\rho$, that
$$
\begin{aligned}\pint_{B_{\rho}} |\nabla u|^{p_-}\, dx&\leq
\pint_{B_{\rho}} |\nabla u|^{p(x)}\,
dx+\frac{1}{|B_{\rho}|}\int_{B_{\rho}\cap\{|\nabla u|<1\}} |\nabla
u|^{p_-}\, dx \\&\leq \pint_{B_{\rho}} |\nabla u|^{p(x)}\, dx+ 1\\
&\leq 1+C \Big(\frac{r}{\rho}\Big)^N + C r^{-p_+}\\ & \leq
1+Cr^{-\ep N}+C r^{-p_+}
\\ & \leq  C r^{-p_+}=C \rho^{-\frac{p_+}{(1+\ep)}}.
\end{aligned}
$$

Now let  $r_0\le r_0(\ep,p_{\min},L)$ so that
$$\frac{p_+}{p_-}=\frac{p_+(B_{r_0})}{p_-(B_{r_0})}\leq 1+\frac{\ep}{2},$$
and small enough so that, in addition, $r_0^{\ep}\leq 1/2$. Then,
if $\rho\leq \rho_0=r_0^{1+\ep}$,
\begin{equation*}
\pint_{B_{\rho}} |\nabla u|^{p_-}\,
dx\leq  C \rho^{-\frac{(1+\frac{\ep}{2})}{(1+\ep)}p_-} = C
\rho^{-(1-\gamma)p_-},
\end{equation*}
where $\gamma=\frac{\frac{\ep}{2}}{(1+\ep)}=\gamma(N, p_{\min})$.
That is, if $\rho\leq \rho_0=r_0^{1+\ep}$
$$\Big(\pint_{B_{\rho}} |\nabla u|^{p_-}\,
dx\Big)^{1/p_-}\leq  C \rho^{\gamma-1}.$$

Thus \eqref{algosale} holds, with $C=C(L,
\|u\|_{L^{\infty}(B_{\hat r_0}(x_0))}, \|f\|_{L^{\infty}(B_{\hat
r_0}(x_0))},\ltwo, p_{\min},p_{\max}, N, a_0, a_1, L_1)$.

Applying Morrey's Theorem, see e.g. \cite{MZ}, Theorem 1.53, we
conclude that $u\in C^{\gamma}(B_{\rho_0}(x_0))$ and
$\|u\|_{C^{\gamma}(\overline{B_{\rho_0/2}(x_0)})}\leq {C}$ for
$C=C(\hat r_0, L, \|u\|_{L^{\infty}(B_{\hat r_0}(x_0))},
\|f\|_{L^{\infty}(B_{\hat r_0}(x_0))},\ltwo, p_{\min},p_{\max},
N, a_0, a_1, L_1)$.
\end{proof}

As a corollary we obtain

\begin{coro}\label{loc-holder}
Let $u$ be as in Theorem \ref{teo-holder}. Then $u\in
C^{\gamma}(\Omega)$ for some $0<\gamma<1$,  $\gamma=\gamma( N,
p_{\min})$. Moreover, if $\Omega'\subset\subset\Omega$, then
$\|u\|_{C^{\gamma}(\overline{\Omega'})}\leq C$ with $C$ depending
only on $N$, ${\rm dist}(\Omega',\partial\Omega)$, $p_{\min}$,
$p_{\max}$, $L$, $\ltwo$, $\|u\|_{L^{\infty}(\Omega)}$,
$\|f\|_{L^{\infty}(\Omega)}$, $a_0$, $a_1$ and $L_1$.
\end{coro}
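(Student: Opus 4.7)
The plan is to derive the corollary from Theorem \ref{teo-holder} by a standard covering and patching argument, using that the Hölder exponent $\gamma=\gamma(N,p_{\min})$ produced there is independent of the point $x_0\in\Omega$.

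First I would fix $\Omega'\subset\subset\Omega$ and set $d={\rm dist}(\Omega',\partial\Omega)>0$. For every $x_0\in\overline{\Omega'}$ the ball $B_{d/2}(x_0)$ lies in $\Omega$, so Theorem \ref{teo-holder} applies with $\hat r_0=d/2$. This gives a radius $\hat\rho_0=\hat\rho_0(d/2, N, p_{\min}, L)$, independent of $x_0$, and a uniform Hölder estimate
\[
\|u\|_{C^{\gamma}(\overline{B_{\hat\rho_0}(x_0)})}\le C,
\]
where $C$ depends only on the quantities listed in the corollary, since $\|u\|_{L^\infty(B_{d/2}(x_0))}\le\|u\|_{L^\infty(\Omega)}$ and $\|f\|_{L^\infty(B_{d/2}(x_0))}\le\|f\|_{L^\infty(\Omega)}$.

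Next I would cover the compact set $\overline{\Omega'}$ by finitely many balls $B_{\hat\rho_0/2}(x_i)$ with $x_i\in\overline{\Omega'}$, $i=1,\dots,K$. For any two points $x,y\in\overline{\Omega'}$, I would split into the two standard cases. If $|x-y|<\hat\rho_0/2$, then picking $i$ with $x\in B_{\hat\rho_0/2}(x_i)$ yields $y\in B_{\hat\rho_0}(x_i)$, so the local estimate from Theorem \ref{teo-holder} gives $|u(x)-u(y)|\le C|x-y|^{\gamma}$. If instead $|x-y|\ge\hat\rho_0/2$, then
\[
|u(x)-u(y)|\le 2\|u\|_{L^\infty(\Omega)}\le 2\|u\|_{L^\infty(\Omega)}\Big(\frac{2}{\hat\rho_0}\Big)^{\gamma}|x-y|^{\gamma},
\]
and combining both bounds furnishes the uniform $C^{\gamma}$ estimate over $\overline{\Omega'}$ with the claimed dependence of the constant.

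Finally, the assertion $u\in C^{\gamma}(\Omega)$ follows since the above argument applied to arbitrary $\Omega'\subset\subset\Omega$ shows that $u$ is locally $\gamma$-Hölder continuous with a common exponent $\gamma=\gamma(N,p_{\min})$ that does not vary with the subdomain. There is no serious obstacle here: the whole content has already been absorbed into Theorem \ref{teo-holder}, and what remains is only a compactness-plus-patching procedure together with the observation that $\hat\rho_0$ and $\gamma$ depend on $\hat r_0=d/2$ and on $(N,p_{\min})$ but not on the base point $x_0$, which is precisely what makes the covering argument uniform.
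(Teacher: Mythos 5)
Your proposal is correct and is precisely the standard covering argument the paper is implicitly relying on when it labels the statement as a corollary of Theorem \ref{teo-holder} (the paper gives no explicit proof). The key observations you make — that $\gamma=\gamma(N,p_{\min})$ and $\hat\rho_0=\hat\rho_0(\hat r_0,N,p_{\min},L)$ are independent of the base point once $\hat r_0=d/2$ is fixed, that the local constants are controlled via $\|u\|_{L^\infty(\Omega)}$ and $\|f\|_{L^\infty(\Omega)}$, and that the final Hölder constant is the maximum of the local constant and the crude bound $2\|u\|_{L^\infty(\Omega)}(2/\hat\rho_0)^\gamma$ so the covering multiplicity never enters — are exactly what is needed, and the claimed dependence of $C$ on ${\rm dist}(\Omega',\partial\Omega)$ comes through $\hat\rho_0$ as you say.
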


Then, under the assumptions of the previous corollary we have that
$u$ is continuous in $\Omega$ and therefore, $\{u>0\}$ is open. We
can now prove the following property for nonnegative local
minimizers of \eqref{funct-J}

 \begin{lemm}\label{lemm-min-ig} Let $p, f$ and $\lambda$ be as in Theorem \ref{existence-minimizers-AC}.
 Let $u\in W^{1,p(\cdot)}(\Omega)\cap L^{\infty}(\Omega)$  be a nonnegative local minimizer of
\begin{equation*}
 J(v)=\int_\Omega\Big(\frac{|\nabla
 v|^{p(x)}}{p(x)}+\lambda(x)\chi_{\{v>0\}}+fv\Big)\,dx.
\end{equation*}
 Then
\begin{equation}\label{min-ig}
\Delta_{p(x)} u  = f \quad \mbox{ in }\{u>0\}.
\end{equation}
\end{lemm}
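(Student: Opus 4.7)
The plan is to mimic the argument of Lemma \ref{lemm-min-desig}, but this time exploit the fact that, on the open set $\{u>0\}$, we are free to take test functions of either sign without perturbing the characteristic function contribution to $J$.

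By Corollary \ref{loc-holder} (applied with $a\equiv 1$), $u$ is continuous in $\Omega$, so $\{u>0\}$ is open. Fix $\xi\in C_0^{\infty}(\{u>0\})$ with compact support $K\subset\{u>0\}$. Since $u$ is continuous and strictly positive on the compact set $K$, we have $m:=\min_{K}u>0$. Hence, for $|t|<t_0:=m/(1+\|\xi\|_{L^{\infty}})$, the function $u+t\xi$ is strictly positive on $K$ and coincides with $u$ on $\Omega\setminus K$, so
\[
\chi_{\{u+t\xi>0\}}=\chi_{\{u>0\}}\quad\text{a.e. in }\Omega,\qquad u+t\xi\in W^{1,p(\cdot)}(\Omega).
\]
Applying the local minimality of $u$ on any open set $\Omega'\subset\subset\Omega$ containing $K$ (see Remark \ref{rem-local}), the $\lambda(x)\chi_{\{v>0\}}$ term cancels in the difference $J(u+t\xi)-J(u)$, giving
\[
0\le \frac{1}{t}\Big(J(u+t\xi)-J(u)\Big)=\frac{1}{t}\int_{\Omega}\Big(\frac{|\nabla u+t\nabla\xi|^{p(x)}}{p(x)}-\frac{|\nabla u|^{p(x)}}{p(x)}\Big)\,dx+\int_{\Omega} f\xi\,dx
\]
for every $0<t<t_0$, and the reverse inequality for $-t_0<t<0$.

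Letting $t\to 0^{+}$ and $t\to 0^{-}$ separately, and using dominated convergence (the integrands are dominated, uniformly in $t\in(-t_0,t_0)$, by $C(|\nabla u|^{p(x)}+|\nabla\xi|^{p(x)})\in L^{1}(\Omega)$), we obtain the equality
\[
\int_{\Omega}|\nabla u|^{p(x)-2}\nabla u\cdot\nabla\xi\,dx+\int_{\Omega}f\xi\,dx=0
\]
for every $\xi\in C_0^{\infty}(\{u>0\})$. By definition this says $\Delta_{p(x)}u=f$ weakly in $\{u>0\}$, which is the desired identity \eqref{min-ig}.

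There is no real obstacle here; the only subtlety is the justification that sign-changing test functions are admissible, which is the point where the positivity of $u$ on $\text{supp}(\xi)$ (guaranteed by continuity plus openness of $\{u>0\}$) is essential, and which distinguishes this result from the one-sided inequality of Lemma \ref{lemm-min-desig}.
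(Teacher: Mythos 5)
Your proof is correct and is essentially the same argument as the paper's, differing only in organization. The paper establishes $\Delta_{p(x)}u\ge f$ in $\Omega$ first (Lemma \ref{lemm-min-desig}), then obtains the reverse inequality on $\{u>0\}$ by testing with $u-t\xi$, $t<0$, for $0\le\xi\in C_0^\infty(\{u>0\})$; in that case $u-t\xi\ge u$ automatically, so $\{u-t\xi>0\}=\{u>0\}$ with no quantitative positivity bound needed. You instead take sign-changing $\xi$ and use continuity of $u$ (Corollary \ref{loc-holder}) to get $\min_K u>0$ so that $\chi_{\{u+t\xi>0\}}=\chi_{\{u>0\}}$ for $|t|$ small of either sign, giving both inequalities in one pass. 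Both routes are valid; the paper's sidesteps the need for a positive lower bound on $u$ over $\mathrm{supp}\,\xi$, while yours is marginally more symmetric. Your remark on dominated convergence is a sound justification of the step the paper handles via the convexity inequality $\frac{|a+b|^p-|a|^p}{p}\le|a+b|^{p-2}(a+b)\cdot b$.
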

\begin{proof}
{ }From Lemma \ref{lemm-min-desig} we already know that
\eqref{min-desig} holds. In order to obtain the opposite
inequality in $\{u>0\}$, we let $0\le\xi\in C^{\infty}_0(\{u>0\})$
and consider $u-t\xi$, for $t<0$, with $|t|$ small.

 Using the minimality of $u$ we have
\begin{align*}0 &\geq \frac{1}{t} (J(u-t
\xi)-J(u)) = \frac{1}{t} \int_{\Omega}\Big(\frac{|\nabla u-t
\nabla \xi|^{p(x)}}{p(x)}-\frac{|\nabla u|^{p(x)}}{p(x)}\Big)\, dx
\,
-\int_{\Omega}f\xi\, dx\\
& \geq -\int_{\Omega}
 |\nabla u-t \nabla \xi|^{p(x)-2}(\nabla u-t \nabla
\xi)\cdot \nabla \xi\, dx -\int_{\Omega}f\xi\, dx
\end{align*}
and if we take $t\rightarrow 0$, we obtain
\begin{equation*}
0\geq -\int_{\Omega} |\nabla u|^{p(x)-2}\nabla u\cdot\nabla \xi\,
dx-\int_{\Omega}f\xi\, dx,
\end{equation*}
which gives  the desired inequality, so \eqref{min-ig} follows.
 \end{proof}

We will make use of the following version of Harnack's inequality

\begin{prop}\label{harnack} Let $x_0\in \R^N$ and $0<\delta\le 1$.  Let $1<p_{\min}\le p(x)\le p_{\max}<\infty$ in $B_{\delta}(x_0)$, with $\|\nabla
p\|_{L^{\infty}(B_{\delta}(x_0))}\leq L$ and $f\in {L^{\infty}(B_{\delta}(x_0))}$.  There exists a constant $C>0$ such that, if  $u\in W^{1,p(\cdot)}(B_{\delta}(x_0))\cap L^{\infty}(B_{\delta}(x_0))$  is a nonnegative  solution of
\begin{equation*}
\Delta_{p(x)} u  = f \quad \mbox{ in }B_{\delta}(x_0),
\end{equation*}
then,
\begin{equation}\label{des-harnack}\sup_{{B_{\frac{3}{4}\delta}}(x_0)} u\le C\big[\inf_{{B_{\frac{3}{4}\delta}}(x_0)}  u
+\delta\big].
\end{equation}
The constant $C$ depends only on $N$, $p_{\min}$, $p_{\max}$, $L$,  $\|f\|_{L^{\infty}(B_{\delta}(x_0))}$ and
${\|u\|_{L^{\infty}(B_{\delta}(x_0))}^{p_{+}^{\delta}-p_{-}^{\delta}}}$, where $p_{+}^{\delta}=\sup_{{B_{\delta}}(x_0)} p(x)$ and
$p_{-}^{\delta}=\inf_{{B_{\delta}}(x_0)} p(x)$.
\end{prop}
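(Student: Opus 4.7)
The plan is to reduce the assertion to a Harnack inequality on the unit ball by a rescaling that preserves the $p(x)$-Laplacian structure, and then invoke a Harnack-type inequality already available for variable exponent quasilinear equations. After a translation one may assume $x_0=0$. I introduce $v(y):=u(\delta y)/\delta$ on $B_1$, together with $\tilde p(y):=p(\delta y)$ and $\tilde f(y):=\delta\,f(\delta y)$. A direct chain rule computation gives $\nabla_y v(y)=(\nabla u)(\delta y)$, whence $|\nabla_y v|^{\tilde p(y)-2}\nabla_y v=|\nabla u|^{p(x)-2}\nabla u$ at $x=\delta y$; differentiating in $y$ produces an extra factor $\delta$ and yields $\Delta_{\tilde p(y)}v=\tilde f(y)$ in $B_1$. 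Since $\delta\le 1$, the exponent $\tilde p$ takes values in $[p_{\min},p_{\max}]$ with Lipschitz constant $\delta L\le L$, $\|\tilde f\|_{L^\infty(B_1)}\le\|f\|_{L^\infty(B_\delta(x_0))}$, and the oscillation of $\tilde p$ on $B_1$ equals $p_+^\delta-p_-^\delta$.

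Next I apply the Harnack inequality for nonnegative weak solutions of $p(x)$-Laplacian equations with bounded right-hand side on the unit ball, which is by now a standard result when the exponent is Lipschitz (see e.g.\ \cite{Wo}, \cite{Fan} and the references therein), obtaining
\[
\sup_{B_{3/4}}v\le C_1\bigl(\inf_{B_{3/4}}v+1\bigr),
\]
with $C_1=C_1\bigl(N,p_{\min},p_{\max},L,\|\tilde f\|_\infty,\|v\|_{L^\infty(B_1)}^{p_+^\delta-p_-^\delta}\bigr)$. Substituting $v(y)=u(\delta y)/\delta$ back turns this into \eqref{des-harnack}. To check that the dependence on $\|v\|_\infty^{p_+^\delta-p_-^\delta}$ translates into the claimed dependence on $\|u\|_\infty^{p_+^\delta-p_-^\delta}$, I use $\|v\|_\infty=\|u\|_{L^\infty(B_\delta(x_0))}/\delta$ and
\[
\|v\|_\infty^{p_+^\delta-p_-^\delta}=\delta^{-(p_+^\delta-p_-^\delta)}\,\|u\|_\infty^{p_+^\delta-p_-^\delta};
\]
the oscillation bound $p_+^\delta-p_-^\delta\le 2L\delta$ together with $\delta\in(0,1]$ gives $\delta^{-(p_+^\delta-p_-^\delta)}\le\sup_{0<t\le 1}t^{-2Lt}<\infty$, a constant depending only on $L$.

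The hard part is pinning down the unit ball Harnack inequality with the precise dependence of the constant on $\|v\|_\infty^{p_+-p_-}$: this factor is the signature of the lack of scale invariance of the $p(x)$-Laplacian and appears whenever the Moser or De Giorgi iteration is adapted to the variable exponent setting, because the Caccioppoli-type inequalities acquire multiplicative corrections of order $\|v\|_\infty^{p_+-p_-}$. Once that unit ball estimate is available in explicit form, the scaling relation $\delta^{-(p_+^\delta-p_-^\delta)}\le C(L)$ derived above is exactly what prevents the final constant from degenerating as $\delta\downarrow 0$.
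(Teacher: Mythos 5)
Your overall strategy coincides with the paper's: translate, rescale by $\delta$ to reduce to the unit ball, and then track how the constant's dependence on $\|v\|_{\infty}^{p_+-p_-}$ transforms under rescaling. Your chain-rule computation showing $\Delta_{\tilde p(y)}v=\tilde f(y)$ with $\tilde f(y)=\delta f(\delta y)$ is correct, and the observation $\delta^{-(p_+^\delta-p_-^\delta)}\le \delta^{-2L\delta}\le C(L)$ for $\delta\in(0,1]$ is exactly what the paper uses to prevent degeneration as $\delta\downarrow 0$.

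The gap is in the step you describe as ``the hard part'' and then dispatch with ``which is by now a standard result.'' The reference you cite, \cite{Wo} (Theorem 2.1), does not directly give a Harnack inequality of the form $\sup_{B_{3/4}}v\le C(\inf_{B_{3/4}}v+1)$; it gives such an estimate only on \emph{small} balls inside the domain of validity of the equation (the paper applies it on $B_{1/8}(y_0)$ to conclude on $B_{1/32}(y_0)$). To upgrade this to $B_{3/4}$ one must run a chaining argument: cover $\overline{B_{3/4}}$ by finitely many overlapping balls of radius $1/32$ centered in $B_{3/4}$, connect any two points $x,y\in\overline{B_{3/4}}$ by a finite chain through these balls, and iterate the small-scale inequality along the chain. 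This produces $u(x)\le C^k(u(y)+k)$ for a universal $k$, which is the desired estimate. You need to supply this step (or locate a reference that literally proves the $B_{3/4}$-version with the precise dependence on the oscillation of $p$), because the multiplicative factor $\|v\|_\infty^{p_+-p_-}$ must be seen to survive the iteration; this is not a cosmetic point, it is the one thing the constant in the statement is allowed to depend on and that the rescaling then controls.
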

\begin{proof} We will first assume that $x_0=0$ and $\delta=1$. From Theorem 1.1 in \cite{Fan}, we know that $u\in C({B_1}(0))$.

Let $y_0\in B_{3/4}(0)$. Since  $\Delta_{p(x)} u  = f$ in $B_{1}(0)$, by Theorem 2.1 in \cite{Wo}, applied in  $B_{1/8}(y_0)$, we get
\begin{equation}\label{pre-harnack}\sup_{{B_{\frac{1}{32}}}(y_0)} u\le C\big[\inf_{{B_{\frac{1}{32}}}(y_0)}  u
+ 1\big],
\end{equation}
where $C$ is a positive constant that can be chosen so that $C>1$ and so that it depends only on $N$, $p_{\min}$, $p_{\max}$, $L$,
$\|f\|_{L^{\infty}(B_{1}(0))}$ and  ${\|u\|_{L^{\infty}(B_{1}(0))}^{p_{+}^{1}-p_{-}^{1}}}$, where $p_{+}^{1}=\sup_{{B_{1}}(0)} p(x)$ and
$p_{-}^{1}=\inf_{{B_{1}}(0)} p(x)$.

We now cover $\overline{B_{3/4}(0)}$ with $k$ balls  centered in ${B_{3/4}(0)}$ of radius $1/32$ ($k\ge 1$ a universal number).
Let $x,y\in \overline{B_{3/4}(0)}$, we choose balls of the covering and points, and we number them, in such a way that $x_0=x\in B_1$, $x_i\in B_i\cap B_{i+1}$ and
$x_j=y\in B_j$, for $1\le i \le j-1$ and $j\le k$.

It follows from \eqref{pre-harnack} that
$$ u(x_i)\le C\big[ u(x_{i+1}) + 1\big], \quad i=0,\dots,j-1,$$
which gives  $u(x)\le C^k\big[ u(y) + k\big]$. Therefore,
\begin{equation*}
\Delta_{p(x)} u  = f \quad \mbox{ in }B_{1}(0),
\end{equation*}
implies
\begin{equation}\label{pre-harnack-bis}\sup_{{B_{\frac{3}{4}}}(0)} u\le C\big[\inf_{{B_{\frac{3}{4}}(0)}}  u
+ 1\big],
\end{equation}
for a constant $C>0$ depending only on $N$, $p_{\min}$, $p_{\max}$, $L$,
$\|f\|_{L^{\infty}(B_{1}(0))}$ and  ${\|u\|_{L^{\infty}(B_{1}(0))}^{p_{+}^{1}-p_{-}^{1}}}$.

For general $x_0\in \R^N$ and $0<\delta\le 1$, we take $\bar u(x)=\frac{1}{\delta} u(x_0+\delta x)$. Then, as
\begin{equation*}
\Delta_{\bar p(x)} \bar u  = \bar f \quad \mbox{ in }B_{1}(0),
\end{equation*}
with $\bar p(x)= p(x_0+\delta x)$ and $\bar f(x)=\delta f(x_0+\delta x)$, there holds that $\bar u$ satisfies \eqref{pre-harnack-bis}. Finally, observing that $p_{\min}\le \bar p(x)\le p_{\max}$ in $B_{1}(0)$, $\|\nabla \bar p\|_{L^{\infty}(B_{1}(0))}\leq L$, $\|\bar f\|_{L^{\infty}(B_{1}(0))}\le \|f\|_{L^{\infty}(B_{\delta}(x_0))}$,
$$
{\|\bar u\|_{L^{\infty}(B_{1}(0))}^{\bar p_{+}^{1}-\bar p_{-}^{1}}}=
\Big({\frac{1}{\delta}}{\|u\|_{L^{\infty}(B_{\delta}(x_0))}}\Big)^{p_{+}^{\delta}-p_{-}^{\delta}},
$$
and
$$
\Big({\frac{1}{\delta}}\Big)^{p_{+}^{\delta}-p_{-}^{\delta}}\le \Big({\frac{1}{\delta}}\Big)^{2L{\delta}}\le C(L),
$$
we obtain the desired result.
\end{proof}

We will next prove the Lipschitz continuity of nonnegative local minimizers of \eqref{funct-J}. In the case in which $f\equiv 0$ and
$p(x)\ge 2$ this result was proven in \cite{FBMW}. In order to deal with the general case we will employ a different strategy than the one in
\cite{FBMW}.

\smallskip

Before getting the Lipschitz continuity we  prove  the following result

{\begin{theo}\label{pre-lip} Let $p, f, \lambda$ and $u$ be as in Lemma \ref{lemm-min-ig}.
 Let $\Omega'\subset\subset\Omega$. There exist  constants
 $C>0$, $r_0>0$ such that if  $x_0\in \Omega'\cap\partial\{u>0\}$ and $r\le r_0$ then
$$\sup_{B_{r}(x_0)} u\leq C r.$$
The constants depend only on $N, p_{\min}, p_{\max}, L,
||f||_{L^{\infty}(\Omega)}, \lone, \ltwo, ||u||_{
{L^{\infty}}(\Omega)}$ and ${\rm dist}(\Omega',\partial\Omega)$.
\end{theo}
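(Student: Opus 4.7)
The plan is a contradiction/rescaling argument. Suppose the conclusion fails. Then there exist $\Omega'\subset\subset\Omega$ and sequences $x_k\in \Omega'\cap\partial\{u>0\}$, $r_k\to 0^+$, such that
$$
M_k:=\frac{1}{r_k}\sup_{B_{r_k}(x_k)} u\longrightarrow +\infty.
$$
Setting $\mu_k=M_k r_k=\sup_{B_{r_k}(x_k)}u$, I would define the blow-up sequence
$$
u_k(x):=\frac{u(x_k+r_k x)}{\mu_k},\qquad x\in B_1,
$$
so that $\sup_{B_1}u_k=1$ and, thanks to Corollary \ref{loc-holder}, $u_k(0)=0$.

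The next step is to derive the Euler--Lagrange / minimization problem solved by $u_k$. A change of variables in $J$ and division by $(\mu_k/r_k)^{p_k(0)}r_k^{-N}$ show that $u_k$ is a nonnegative local minimizer in $B_1$ of the rescaled energy
$$
J_k(v)=\int_{B_1}\Big(a_k(x)\frac{|\nabla v|^{p_k(x)}}{p_k(x)}+\Lambda_k\,\lambda_k(x)\chi_{\{v>0\}}+F_k\, f_k(x)v\Big)\,dx,
$$
with $p_k(x)=p(x_k+r_k x)$, $\lambda_k(x)=\lambda(x_k+r_k x)$, $f_k(x)=f(x_k+r_k x)$, coefficient $a_k(x)=M_k^{p_k(x)-p_k(0)}$, and scalars $\Lambda_k=M_k^{-p_k(0)}\to 0$, $F_k=r_k M_k^{1-p_k(0)}\to 0$. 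This is precisely the ``more general operator with nonstandard growth'' foreshadowed in the introduction: although the exponent $p_k$ is still of $p(x)$-type, the coefficient $a_k(x)$ has no a priori uniform bound in~$k$.

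The key technical point is to control $a_k$. Using $\|\nabla p\|_{L^\infty}\le L$ gives $|p_k(x)-p_k(0)|\le Lr_k$ on $B_1$, hence
$$
e^{-Lr_k\log M_k}\;\le\; a_k(x)\;\le\; e^{Lr_k\log M_k}.
$$
The plan is to force the quantity $r_k\log M_k$ to stay bounded by a judicious choice of the blow-up parameters: replacing the original sequence by an infimal one (a Calder\'on--Zygmund style selection, picking $r_k$ as the smallest radius at which $\sup_{B_r}u/r$ exceeds a prescribed large threshold) yields $\sup_{B_\rho}u_k\le \rho$ for all $\rho<1$ and, after a further diagonal argument, $r_k\log M_k\le C$.

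With $a_k$ then uniformly bounded and bounded below, $a_k$ Lipschitz, and $p_k$ uniformly Lipschitz with $p_k\to p_\infty$ a constant, Theorem \ref{teo-holder} applied to $u_k$ yields a uniform $C^\gamma_{\mathrm{loc}}(B_1)$ bound. Extracting a subsequence, $u_k\to u_\infty$ locally uniformly in $B_1$, with $u_\infty\ge 0$, $u_\infty(0)=0$, and a point $z_\infty\in \overline{B_1}$ (obtained from $y_k\in B_{r_k}(x_k)$ with $u(y_k)\ge \mu_k/2$) at which $u_\infty(z_\infty)\ge 1/2$. Because $\Lambda_k, F_k\to 0$, passing to the limit in the minimization property gives that $u_\infty$ is a local minimizer of $v\mapsto \int_{B_1}\frac{|\nabla v|^{p_\infty}}{p_\infty}$, hence is $p_\infty$-harmonic throughout $B_1$. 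The strong minimum principle for the $p_\infty$-Laplacian applied to the nonnegative $p_\infty$-harmonic function $u_\infty$, vanishing at $0$ but strictly positive at $z_\infty$, yields the desired contradiction.

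The main obstacle is the second paragraph above: guaranteeing that the rescaled coefficient $a_k$ is uniformly controlled, which requires the careful choice of $(x_k,r_k)$ and a precise analysis of the nonstandard growth of $J_k$. Once that is in place, the compactness-plus-Liouville-type step runs along classical lines, using the Hölder regularity developed in Theorem \ref{teo-holder} in a uniform way.
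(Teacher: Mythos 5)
Your overall architecture (contradiction, blow-up, uniform interior regularity, strong minimum principle) is the right one and matches the paper's strategy, but the specific normalization you chose leaves a genuine gap in the final compactness step. You set $u_k(x)=u(x_k+r_k x)/\mu_k$ with $\sup_{B_1}u_k=1$. The point $y_k$ realizing (half of) this sup may drift to $\partial B_1$; since Theorem \ref{teo-holder} gives only interior H\"older bounds and the convergence $u_k\to u_\infty$ is locally uniform in $B_1$, there is no guarantee that $u_\infty$ is strictly positive anywhere in $B_1$. The strong minimum principle for the $p_\infty$-Laplacian then gives $u_\infty\equiv 0$, but that is \emph{not} a contradiction with your hypotheses. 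Your proposed "Calder\'on--Zygmund selection" does not help here — in fact the bound $\sup_{B_\rho}u_k\le\rho$ for $\rho<1$ that you claim it yields would push the supremum \emph{toward} $\partial B_1$, making the problem worse. The paper closes this gap by a different device: it maximizes the weighted quantity $(1-|x|)\bar u_k(x)$ over the set $\mathcal{O}_k=\{d_k(x)\le(1-|x|)/3\}$, producing a point $x_k$ (and a nearby free boundary point $y_k$) that stays well inside the ball, and then chains the Harnack inequality (Proposition \ref{harnack}) to obtain a second blow-up $w_k$ centered at $y_k$ with $w_k(0)=0$, $\max_{\overline{B_1}}w_k\le 2$ \emph{and} $\max_{\overline{B_{1/2}}}w_k\ge c>0$. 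This last inequality is what produces the contradiction with the minimum principle — it is exactly the ingredient your argument is missing.

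A secondary, smaller point: your "key technical point" — forcing $r_k\log M_k$ to stay bounded by a selection of radii — is actually automatic, so no such selection is needed. Since $x_k\in\partial\{u>0\}$ and $\mu_k=\sup_{B_{r_k}(x_k)}u\le\|u\|_{L^\infty(\Omega)}$, one has $M_k=\mu_k/r_k\le\|u\|_\infty/r_k$, hence $r_k\log M_k\le r_k\log\|u\|_\infty+r_k\log(1/r_k)\to 0$ as $r_k\to 0$. In particular $a_k(x)=M_k^{p_k(x)-p_k(0)}\to 1$ uniformly, with $\|\nabla a_k\|_{L^\infty}\to 0$, so Theorem \ref{teo-holder} applies with uniform constants for free. (The analogous estimate in the paper, for the coefficient $\tilde a_k=c_k^{\tilde p_k(x)-\tilde p_k^-}$, follows in the same way from $r_k\delta_k\log(2M/(r_k\delta_k))\to 0$.) Finally, passing to the limit in the minimality should be done with some care because the exponent $p_k$ varies; the paper compares $w_k$ against the $p_k$-harmonic replacement $v_k$ and proves $\|\nabla w_k-\nabla v_k\|_{L^{p_k(\cdot)}}\to 0$ rather than invoking a direct $\Gamma$-type argument, precisely to sidestep the nonuniform integrability of the gradients.
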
}
\begin{proof}
Let us suppose by contradiction that there exist a sequence of
nonnegative local minimizers $u_k$   corresponding to
functionals $J_k$ given by functions $p_k$, $f_k$ and $\lambda_k$,
with $u_k\in W^{1,p_k(\cdot)}(\Omega)\cap L^{\infty}(\Omega)$,  $p_{\min}\leq p_k(x)\leq p_{\max}$, $\|\nabla
p_k\|_{L^{\infty}}\leq L$, $||f_k||_{L^{\infty}(\Omega)}\le M_0$,
 $\lone\le\lambda_k(x)\le\ltwo$,  $||u_k||_{L^{\infty}(\Omega)}\le M$
and points $\bar x_k\in\Omega'\cap\partial\{u_k>0\}$,  such that
$$
\sup_{B_{r_k/4}(\bar x_k)} u_k\geq k r_k\quad \mbox{ and } \quad
r_k\le \frac{1}{k}.
$$

Without loss of generality we will assume that $\bar x_k=0$.

Let us define in $B_1$, for $k$ large,
$\bar u_k(x)=\frac1{r_k}u_k(r_k x)$, ${\bar p}_k(x)=p_k(r_k
x)$, ${\bar f}_k(x)=r_k f_k(r_k x)$ and ${\bar
\lambda}_k(x)={\lambda}_k(r_k x)$. Then $p_{\min}\leq {\bar
p}_k(x)\leq p_{\max}$, $\|\nabla {\bar
p}_k\|_{L^{\infty}(B_1)}\leq L r_k$,
$\lone\le{\bar\lambda}_k(x)\le\ltwo$ and $||{\bar
f}_k||_{L^{\infty}(B_1)}\le M_0 r_k$. Moreover, $\bar u_k$ is a
nonnegative minimizer in $\bar u_k + W_0^{1,\bar p_k(\cdot)}(B_1)$ of the functional
\begin{equation}\label{funcbarjk}
{\bar J}_k(v)=\int_{B_1}\Big(\frac{|\nabla v|^{{\bar
p}_k(x)}}{{\bar p}_k(x)}+{\bar\lambda}_k(x)\chi_{\{v>0\}}+{\bar
f}_k\,v\Big)\,dx
\end{equation}
with
$$\bar u_k(0)=0\qquad  \mbox{ and } \qquad \di\max_{\overline{B}_{1/4}}\bar u_k(x)>k.$$ Let
$d_k(x)=\di\mbox{dist}(x,\{\bar u_k=0\})$ and
$\mathcal{O}_k=\di\Big\{x\in B_1: d_k(x)\leq
\frac{1-|x|}{3}\Big\}$. Since $\bar u_k(0)=0$ then
$\overline{B}_{1/4}\subset \mathcal{O}_k$, therefore
$$m_k:=\sup_{\mathcal{O}_k}(1-|x|) \bar u_k(x)\geq
 \max_{\overline{B}_{1/4}}(1-|x|) \bar u_k(x)\geq \frac{3}{4} \max_{\overline{B}_{1/4}} \bar u_k(x)
 >\frac{3}{4} k.$$
 For each fix $k$, $\bar u_k$ is bounded, then $(1-|x|)
 \bar u_k(x)\rightarrow 0 \mbox{ when } |x|\rightarrow 1$ which means
 that there exists $x_k\in {\mathcal{O}_k}$ such that $(1-|x_k|)
 \bar u_k(x_k)= \sup_{\mathcal{O}_k}(1-|x|) \bar u_k(x)$, and then
\begin{equation}\label{mas34k}
\bar u_k(x_k)=\frac{m_k}{1-|x_k|}\geq m_k> \frac{3}{4} k
\end{equation}
as $x_k\in \mathcal{O}_k$, and $\delta_k:= d_k(x_k)\leq
\frac{1-|x_k|}{3}$. Let $y_k\in \partial\{\bar u_k>0\}\cap B_1$ such
that $|y_k-x_k|=\delta_k$. Then,
$$\begin{array}{ll}\di (1)\ B_{2\delta_k}(y_k)\subset B_1,\\\\ \di \mbox{ since if
} y\in B_{2\delta_k}(y_k) \Rightarrow |y|< 3\delta_k + |x_k|\leq
1,\\\\
\di (2)\ B_{\frac{\delta_k}{2}}(y_k)\subset \mathcal{O}_k, \\\di
\mbox{ since if } y\in B_{\frac{\delta_k}{2}}(y_k) \Rightarrow
|y|\leq \frac{3}{2}\delta_k + |x_k|\leq 1-\frac{3}{2} \delta_k
\Rightarrow d_k(y)\leq \frac{\delta_k}{2}\leq \frac{1-|y|}{3} \ \
\ \mbox{ and }\\\\\di (3) \mbox{ if } z\in
B_{\frac{\delta_k}{2}}(y_k) \Rightarrow 1-|z|\geq
1-|x_k|-|x_k-z|\geq 1-|x_k|-\frac{3}{2} \delta_k\geq
\frac{1-|x_k|}{2}.
\end{array}$$
By (2) we have
$$\max_{\mathcal{O}_k}(1-|x|) \bar u_k(x)\geq \max_{\overline{B_{\frac{\delta_k}{2}}}(y_k)}(1-|x|) \bar u_k(x)\geq
\max_{\overline{B_{\frac{\delta_k}{2}}}(y_k)}\frac{(1-|x_k|)}{2}
\bar u_k(x),$$ where in the last inequality we are using (3). Then,
\begin{equation}\label{uk1}2 \bar u_k(x_k)\geq
\max_{\overline{B_{\frac{\delta_k}{2}}}(y_k)}
\bar u_k(x).\end{equation} As $B_{\delta_k}(x_k)\subset \{\bar u_k>0\}$ then
$\Delta_{{\bar p}_k(x)}\bar u_k={\bar f}_k$ in $B_{\delta_k}(x_k)$, and by Harnack's
inequality (Proposition \ref{harnack}) we have
\begin{equation}\label{uk1-2}\max_{\overline{B_{\frac{3}{4}\delta_k}}(x_k)}\bar u_k(x)\le C\big[\min_{\overline{B_{\frac{3}{4}\delta_k}}(x_k)} \bar u_k(x)
+\delta_k\big],
\end{equation}
with $C$ a positive constant depending only on $N, p_{\min}, p_{\max}, L, M_0$ and $M$. We point out that, in order to get this uniform constant $C$ in
\eqref{uk1-2}, we have used, while applying Proposition \ref{harnack}, that
$$\gamma_k:=\sup_{B_{\delta_k}(x_k)}\bar p_k -\inf_{B_{\delta_k}(x_k)}\bar p_k\le 2L r_k\delta_k\le 2L r_k,$$
so that
$$||\bar u_k||_{L^{\infty}(B_{\delta_k}(x_k))}^{\gamma_k}\le ({M}/r_k)^{2L r_k}\le C_0(L,M).$$
Recalling \eqref{mas34k}, we get from \eqref{uk1-2}, for $k$ large,
\begin{equation}\label{uk2}\min_{\overline{B_{\frac{3}{4}\delta_k}}(x_k)} \bar u_k(x)\geq c
\bar u_k(x_k),\end{equation}
with $c$ a positive constant depending only on $N, p_{\min}, p_{\max}, L, M_0$ and $M$. As
$\overline{B_{\frac{3}{4}\delta_k}}(x_k)\cap
\overline{B_{\frac{\delta_k}{4}}}(y_k)\neq \emptyset$ we have by
\eqref{uk2}
\begin{equation}\label{uk3}
\max_{\overline{B_{\frac{\delta_k}{4}}}(y_k)} \bar u_k(x)\geq c
\bar u_k(x_k).\end{equation} Let
$w_k(x)=\di\frac{\bar u_k(y_k+\frac{\delta_k}{2} x)}{\bar u_k(x_k)}$. Then,
$w_k(0)=0$ and, by \eqref{uk1} and \eqref{uk3}, we have
\begin{align}\label{desiwk}\max_{\overline{B_1}}w_k\leq 2 &\qquad \max_{\overline{B_{1/2}}}
w_k\geq c>0.\end{align}
Now, recalling that $\bar u_k$ is a
nonnegative minimizer in $\bar u_k + W_0^{1,\bar p_k(\cdot)}(B_1)$ of the functional ${\bar J}_k$ in \eqref{funcbarjk} and that
$B_{\frac{\delta_k}{2}}(y_k)\subset B_1$,
we see that $w_k$ is a nonnegative minimizer of $\hat J_k$ in $w_k + W^{1,{\bar p}_k(y_k+\frac{\delta_k}{2} x)}_0(B_1)$, where
$$
{\hat J}_k(v)=\int_{B_1}\Big( c_k^{{\bar p}_k(y_k+\frac{\delta_k}{2} x)}
\frac{|\nabla v|^{{\bar p}_k(y_k+\frac{\delta_k}{2} x)}}{{\bar p}_k(y_k+\frac{\delta_k}{2} x)}+{\bar\lambda}_k(y_k+\frac{\delta_k}{2} x)\chi_{\{v>0\}}+{\bar f}_k(y_k+\frac{\delta_k}{2} x)\bar u_k(x_k)\,v\Big)\,dx,
$$
and $c_k=\frac{2 \bar u_k(x_k)}{\delta_k}$.

We now notice that $c_k\rightarrow\infty$. So we define ${{\tilde p}_k(x)}={\bar p}_k(y_k+\frac{\delta_k}{2} x)$ and divide the
functional $\hat J_k$ by $c_k^{{\tilde p}_k^{-}}$, with ${{\tilde p}_k^{-}}=\inf_{B_1}{\tilde p}_k$. Then, it follows that  $w_k$ is a nonnegative minimizer of $\tilde J_k$ in $w_k + W^{1,\tilde p_k(\cdot)}_0(B_1)$, where
$$
{\tilde J}_k(v)=\int_{B_1}\Big( \tilde a_k(x)\frac{|\nabla v|^{{\tilde
p}_k(x)}}{{\tilde p}_k(x)}+{\tilde\lambda}_k(x)\chi_{\{v>0\}}+{\tilde f}_k\,v\Big)\,dx,
$$
$\tilde a_k(x)=c_k^{{{\tilde p}_k(x)}-{{\tilde p}_k^{-}}}$ ,  ${\tilde\lambda}_k(x)={\bar\lambda}_k(y_k+\frac{\delta_k}{2} x)c_k^{-{{\tilde p}_k^{-}}}$
and ${\tilde f}_k(x)={\bar f}_k(y_k+\frac{\delta_k}{2} x)\bar u_k(x_k)c_k^{-{{\tilde p}_k^{-}}}$.

We claim that
\begin{equation}\label{fkacero}
\|{\tilde f}_k\|_{L^{\infty}}\leq \tilde M_0\quad \mbox{and}\quad {\tilde f}_k\rightarrow 0  \quad\mbox {uniformly in }B_1,
\end{equation}
\begin{equation}\label{lambdakacero}
{{\tilde\lambda}_k}\rightarrow 0  \quad\mbox {uniformly in }B_1,
\end{equation}
\begin{equation}\label{akauno}
{{\tilde a}_k}\rightarrow 1 \ \mbox{ uniformly,}  \quad 1\le {{\tilde a}_k}\le M_1\quad\mbox{and}\quad\|\nabla \tilde a_k\|_{L^{\infty}}\leq L_1\quad\mbox {in }B_1,
\end{equation}
\begin{equation}\label{pkapcero}
{\tilde p}_k\rightarrow p_0 \ \mbox{ uniformly}\quad\mbox{and}\quad p_{\min}\le p_0\le p_{\max} \quad\mbox {in }B_1,
\end{equation}
up to a subsequence, for some constants $\tilde M_0$, $M_1$, $L_1$ and $p_0$.

In fact, \eqref{fkacero} follows since
$|{\tilde f}_k(x)|=|r_k{f}_k(r_k(y_k+\frac{\delta_k}{2} x))\frac{u_k(r_k x_k)}{r_k}c_k^{-{{\tilde p}_k^{-}}}|\le M_0 M c_k^{-1}\to 0$. On the other hand, $0<{{\tilde\lambda}_k}(x)\le \ltwo c_k^{-1}\to 0$ gives \eqref{lambdakacero}.

In addition, in $B_1$  there holds, for $k$ large, that
$1\le {{\tilde a}_k}(x)\le e^{2 \|\nabla \tilde p_k\|_{L^{\infty}}\log c_k}$ and
$\|{\nabla{\tilde a}_k}\|_{L^{\infty}}\le \|\nabla \tilde p_k\|_{L^{\infty}}\log c_k\|{{\tilde a}_k}\|_{L^{\infty}}$. But
$\|\nabla \tilde p_k\|_{L^{\infty}}\log c_k\le L r_k\frac{\delta_k}{2}\log \big(\frac{2M}{r_k \delta_k}\big)\to 0$, which implies
\eqref{akauno}.

Finally, to see \eqref{pkapcero} we observe that $p_{\min}\leq p_k(x)\leq p_{\max}$ and $\|\nabla p_k\|_{L^{\infty}(\Omega)}\leq L$ and then,
for a subsequence, ${p}_k\rightarrow p$ uniformly on compacts of $\Omega$, so
${\tilde p}_k(x)={p}_k(r_k(y_k+\frac{\delta_k}{2} x))\rightarrow p_0=p(0)$ uniformly in $B_1$.

We now take $v_k$ the solution of
\begin{equation}\label{eqvk}
{\rm div}\big(\tilde a_k(x)|\nabla v_k|^{{\tilde p_k}(x)-2}\nabla v_k\big)={\tilde f}_k \quad\mbox{in }B_{3/4}, \qquad v_k-w_k\in
W_0^{1,\tilde p_k(\cdot)}(B_{3/4}).\end{equation}

{}From Lemma \ref{ppio-max-con-a}, Remark \ref{ppio-max-gral} and the bounds in \eqref{desiwk}, \eqref{fkacero} and \eqref{akauno}, it follows that if $k$ is large enough
\begin{equation}\label{cota-vk}
||v_k||_{L^{\infty}(B_{3/4})}\le \bar C \quad \mbox{ with } \quad\bar
C=\bar C(p_{\min}, \tilde M_0, L_1).
\end{equation}
Here we have used that $\|\nabla \tilde p_k\|_{L^{\infty}}\le L r_k\frac{\delta_k}{2}$ so $\|\nabla \tilde p_k\|_{L^{\infty}} 3/2< p_{\min}-1$
for $k$ large.

Then, applying Theorem 1.1 in \cite{Fan} we obtain  that, for $k$ large,
\begin{equation}\label{cota-vk-grad}
||v_k||_{C^{1,\alpha}(\overline{B_{1/2}})}\le \hat C \quad\mbox{ with }\quad \hat C=\hat C(p_{\min}, p_{\max}, \tilde M_0, L_1, L, M_1, N),
\end{equation}
for some $0<\alpha<1$. Therefore, there is a function
$v_0\in C^{1,\alpha}(\overline{B_{1/2}})$ such that, for a subsequence,
\begin{equation}\label{conv-vk}
v_k\rightarrow v_0    \quad\mbox{and}\quad \nabla v_k\rightarrow \nabla v_0\quad\mbox {uniformly in }\overline{B_{1/2}}.
\end{equation}
Moreover,  \eqref{fkacero}, \eqref{akauno} and \eqref{pkapcero} imply that
\begin{equation}\label{ec-v0}
\Delta_{p_0} v_0=0  \quad\mbox {in } B_{1/2}.
\end{equation}
Let us now show that
\begin{equation}\label{wk-vk-acero}
w_k-v_k\to 0   \quad\mbox {in } L^{p_{\min}}(B_{3/4}).
\end{equation}

{}From the minimality of $w_k$ we have
\begin{equation}\label{uso-min}
\int_{B_{3/4}} \tilde a_k(x)\frac{|\nabla w_k|^{\tilde p_k(x)}}{\tilde p_k(x)}- \tilde a_k(x)\frac{|\nabla v_k|^{\tilde p_k(x)}}{\tilde p_k(x)}+ \int_{B_{3/4}}{\tilde f}_k({w_k}-v_k)
\le C(N)\|\tilde {\lambda}_k\|_{L^{\infty}(B_{3/4})}.
\end{equation}
Then, we can argue as in the proof of Theorem \ref{teo-holder} and get estimate \eqref{standard-hold} for $u=w_k$, $v=v_k$,
$a(x)=\tilde a_k(x)$, $p(x)=\tilde p_k(x)$,  $f={\tilde f}_k$ and $r=3/4$, which together with \eqref{uso-min}, gives
\begin{align}\label{cota-previa-gradiente-1}
&\int_{A_2^k} |\nabla w_k-\nabla v_k|^{\tilde p_k(x)}  \, dx\leq
C \|\tilde {\lambda}_k\|_{L^{\infty}(B_{3/4})},\\\label{cota-previa-gradiente-2}
&\int_{A_1^k}|\nabla w_k-\nabla v_k|^2(|\nabla
w_k|+|\nabla v_k|)^{\tilde p_k(x)-2} \, dx\leq C \|\tilde {\lambda}_k\|_{L^{\infty}(B_{3/4})},
\end{align}
where $A_1^k=B_{3/4}\cap\{\tilde p_k(x)<2\}$, $A_2^k=B_{3/4}\cap\{\tilde p_k(x)\geq 2\}$ and $C=C(p_{\min},p_{\max},N)$.

Applying H\"older's inequality (Theorem \ref{holder}) with exponents $\frac{2}{\tilde p_k(x)}$ and $\frac{2}{2-\tilde p_k(x)}$, we get
\begin{equation}\label{aplic-holder1}
\int_{A_1^k} |\nabla w_k-\nabla v_k|^{\tilde p_k(x)}  \, dx\leq 2 \ \| F_k \|_{L^{2/{\tilde p_k(\cdot)}}({A_1^k})}
\|G_k\|_{L^{{2}/({2-\tilde p_k(\cdot)})}({A_1^k})},
\end{equation}
where
\begin{equation*}
\begin{aligned}
&F_k= |\nabla w_k-\nabla v_k|^{\tilde p_k}(|\nabla
w_k|+|\nabla v_k|)^{({\tilde p_k-2})\tilde p_k/2}\\
&G_k= (|\nabla w_k|+|\nabla v_k|)^{({2-\tilde p_k})\tilde p_k/2}.
\end{aligned}
\end{equation*}
Since
\begin{equation*}
\int_{A_1^k} |F_k|^{2/{\tilde p_k(x)}} \, dx= \int_{A_1^k}|\nabla w_k-\nabla v_k|^2(|\nabla
w_k|+|\nabla v_k|)^{\tilde p_k(x)-2} \, dx,
\end{equation*}
then,  from \eqref{cota-previa-gradiente-2},  \eqref{lambdakacero} and Proposition \ref{equi},  we get, for $k$ large,
\begin{equation}\label{cota-Fk}
\| F_k \|_{L^{2/{\tilde p_k(\cdot)}}({A_1^k})} \le
C \|\tilde {\lambda}_k\|_{L^{\infty}(B_{3/4})}^{p_{\min}/2},
\end{equation}
$C=C(p_{\min},p_{\max},N)$. On the other hand, \eqref{eqvk}  and the bounds in \eqref{fkacero}, \eqref{akauno} and \eqref{cota-vk}  give
\begin{equation*}
\begin{aligned}
\frac{1}{p_{\max}}
\int_{B_{3/4}} |\nabla v_k|^{\tilde p_k(x)} \leq&
\int_{B_{3/4}} \tilde a_k(x)\frac{|\nabla v_k|^{\tilde p_k(x)}}{\tilde p_k(x)} \\
\leq& \int_{B_{3/4}}\tilde a_k(x)\frac{|\nabla w_k|^{\tilde p_k(x)}}{\tilde p_k(x)}+ \int_{B_{3/4}}{\tilde f}_k({w_k}-v_k)\\
\leq& C\big(1+\int_{B_{3/4}}|\nabla w_k|^{\tilde p_k(x)}\big).
\end{aligned}
\end{equation*}
This implies
\begin{equation}\label{cota-Gk1}
\int_{A_1^k} |G_k|^{{2}/({2-\tilde p_k(x)})} \, dx\le C
\int_{B_{3/4}}(|\nabla w_k|^{\tilde p_k(x)}+|\nabla v_k|^{\tilde p_k(x)}) \, dx \le \tilde C\big(1+\int_{B_{3/4}}|\nabla w_k|^{\tilde p_k(x)}\big),
\end{equation}
for some $\tilde C=\tilde C(p_{\min},p_{\max},\tilde M_0, M_1, L_1)\ge 1$.
Now \eqref{cota-Gk1} and Proposition \ref{equi} give
\begin{equation}\label{cota-Gk2}
\| G_k \|_{L^{{2}/({2-\tilde p_k(\cdot)})}({A_1^k})} \le \tilde C\big(1+\int_{B_{3/4}}|\nabla w_k|^{\tilde p_k(x)}\big).
\end{equation}

Let us show that the right hand side in \eqref{cota-Gk2} can be bounded independently of $k$.

In fact, let $\tilde v_k$ be the solution of
\begin{equation}\label{eq-tilde-vk}
{\rm div}\big(\tilde a_k(x)|\nabla \tilde v_k|^{{\tilde p_k}(x)-2}\nabla \tilde v_k\big)={\tilde f}_k \quad\mbox{in }B_{7/8},
\qquad \tilde v_k-w_k\in
W_0^{1,\tilde p_k(\cdot)}(B_{7/8}).\end{equation}
Then, similar arguments to those leading to \eqref{cota-vk} and  \eqref{cota-vk-grad}, give, for $k$ large enough,
\begin{equation}\label{cota-tilde-vk}
||\tilde v_k||_{L^{\infty}(B_{7/8})}\le \bar C \quad \mbox{ with } \quad\bar
C=\bar C(p_{\min}, \tilde M_0, L_1),
\end{equation}
and
\begin{equation}\label{cota-tilde-vk-grad}
||\tilde v_k||_{C^{1,\alpha}(\overline{B_{3/4}})}\le \hat C \quad\mbox{ with }\quad \hat C=\hat C(p_{\min}, p_{\max}, \tilde M_0, L_1, L, M_1, N),
\end{equation}
for some $0<\alpha<1$.

Since $w_k$ is a nonnegative minimizer of $\tilde J_k$ in $B_1$, then we can argue as in the proof of Theorem \ref{teo-holder} and get estimate \eqref{otra} for $u=w_k$, $v=\tilde v_k$,
$a(x)=\tilde a_k(x)$, $p(x)=\tilde p_k(x)$, $\lambda(x)=\tilde\lambda_k(x)$, $f={\tilde f}_k$, $r=7/8$ and $\rho=3/4$. That is,
\begin{equation}\label{otra-wk}\int_{B_{3/4}} |\nabla w_k|^{\tilde p_k(x)}\, dx
 \leq {C} +C\int_{B_{3/4}} |\nabla \tilde v_k|^{\tilde p_k(x)}\, dx,\end{equation}
where $C=C(p_{\min},p_{\max},N, \ltwo)$. Therefore  \eqref{otra-wk} and \eqref{cota-tilde-vk-grad} give, for $k$ large, a uniform bound for the
right hand side in \eqref{cota-Gk2}. That is,
\begin{equation}\label{cota-Gk2-b}
\| G_k \|_{L^{{2}/({2-\tilde p_k(\cdot)})}({A_1^k})} \le \bar C,
\end{equation}
with $\bar C=\bar C(p_{\min}, p_{\max}, \tilde M_0, L_1, L, M_1,N, \ltwo)$.

Now, putting together \eqref{cota-previa-gradiente-1}, \eqref{aplic-holder1}, \eqref{cota-Fk}, \eqref{cota-Gk2-b} and \eqref{lambdakacero}, we obtain
\begin{equation}
\int_{B_{3/4}} |\nabla w_k-\nabla v_k|^{\tilde p_k(x)}  \to 0.
\end{equation}
Thus, using Poincare's inequality (Theorem  \ref{poinc} ) and Theorem \ref{imb}, we get \eqref{wk-vk-acero}.

 In order to conclude the proof, we now observe that,  by Corollary \ref{loc-holder}, there exists $0<\gamma<1$,  $\gamma=\gamma( N,
p_{\min})$, such that
\begin{equation*}\|w_k\|_{C^{\gamma}(\overline{B_{1/2}})}\leq C \ \quad \mbox{ with } \quad
C=C(p_{\min}, p_{\max}, \tilde M_0, L_1, L, M_1, N,\ltwo)\end{equation*}
(recall that $\|w_k\|_{L^\infty(B_1)}\le 2$).

Therefore, there is a function
$w_0\in C^{\gamma}(\overline{B_{1/2}})$ such that, for a subsequence,
\begin{equation}\label{conv-wk}
w_k\rightarrow w_0   \quad\mbox {uniformly in }\overline{B_{1/2}}.
\end{equation}

In addition, recalling \eqref{conv-vk}, \eqref{ec-v0} and \eqref{wk-vk-acero}, we get $v_0=w_0$ in $\overline{B_{1/2}}$ and
$\Delta_{p_0} w_0=0$ in $B_{1/2}$.

Finally, since there holds that  $w_k\ge 0$, $w_k(0)=0$ and \eqref{desiwk}, now \eqref{conv-wk} implies
\begin{equation*}
w_0\ge 0,  \quad w_0(0)=0, \quad \max_{\overline{B_{1/2}}} w_0\geq c>0,
\end{equation*}
 which contradicts the strong minimum principle and concludes the proof.
\end{proof}

We can now prove the Lipschitz continuity of nonnegative local minimizers

\begin{coro}\label{Lip}  Let $p, f, \lambda$ and $u$ be as in Lemma \ref{lemm-min-ig}.
 Then $u$ is locally Lipschitz
continuous in $\Omega$. Moreover, for any
$\Omega'\subset\subset
 \Omega$  the Lipschitz constant of $u$ in $\Omega'$ can be
estimated by a constant $C$ depending
only on $N$,  $p_{\min}$,
$p_{\max}$, $L$,  $\lone$, $\ltwo$, $\|u\|_{L^{\infty}(\Omega)}$,
$\|f\|_{L^{\infty}(\Omega)}$ and ${\rm dist}(\Omega',\partial\Omega)$.
\end{coro}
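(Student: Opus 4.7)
The plan is to combine the free-boundary growth estimate of Theorem \ref{pre-lip} with the interior $C^{1,\alpha}$ regularity for the $p(x)$-Laplacian from \cite{Fan}, via a scaling argument that reduces every point of $\{u>0\}$ to a universal bounded situation. Concretely, I will show that $|\nabla u|$ is bounded a.e.\ in $\Omega'$, and then conclude local Lipschitz continuity from the inclusion $W^{1,p(\cdot)}\cap W^{1,\infty}_{\rm loc}\hookrightarrow C^{0,1}_{\rm loc}$ together with the continuity of $u$ (Corollary \ref{loc-holder}).

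Fix $\Omega'\subset\subset\Omega''\subset\subset\Omega$ and let $r_0$ be the radius given by Theorem \ref{pre-lip} on $\Omega''$. For each $x\in\Omega'\cap\{u>0\}$ set $d(x)=\min\{\mathrm{dist}(x,\partial\{u>0\}),\,r_0/2,\,\mathrm{dist}(\Omega',\partial\Omega'')/2\}$, and define
$$
\rho=d(x),\qquad w(y)=\frac{u(x+\rho y)}{\rho}\quad\text{for } y\in B_1.
$$
Two cases arise: if $\mathrm{dist}(x,\partial\{u>0\})\le r_0/2$, pick $x_0\in\partial\{u>0\}$ with $|x-x_0|=\rho$; then $B_\rho(x)\subset B_{2\rho}(x_0)$ and Theorem \ref{pre-lip} yields $\sup_{B_\rho(x)}u\le 2C\rho$, i.e.\ $w\le 2C$ on $B_1$. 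Otherwise, $\rho$ is bounded below by a constant depending only on $r_0$ and $\mathrm{dist}(\Omega',\partial\Omega)$, and the uniform bound $u\le\|u\|_{L^\infty(\Omega'')}$ of Remark \ref{rema-signo-AC} (applied on $\Omega''$) yields a uniform bound for $w$ on $B_1$.

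A direct chain-rule computation shows that $w$ satisfies $\Delta_{\tilde p(y)}w=\tilde f(y)$ in $B_1$ with $\tilde p(y)=p(x+\rho y)$ and $\tilde f(y)=\rho f(x+\rho y)$. Hence $\tilde p$ is Lipschitz with constant $L\rho\le L$, $p_{\min}\le\tilde p\le p_{\max}$, and $\|\tilde f\|_{L^\infty(B_1)}\le\|f\|_{L^\infty(\Omega)}$. Theorem 1.1 of \cite{Fan} then gives $w\in C^{1,\alpha}(\overline{B_{1/2}})$ with
$$
|\nabla w(0)|\le C_*,
$$
where $C_*$ depends only on $N$, $p_{\min}$, $p_{\max}$, $L$, $\ltwo$, $\|u\|_{L^\infty(\Omega)}$, $\|f\|_{L^\infty(\Omega)}$ and $\mathrm{dist}(\Omega',\partial\Omega)$. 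Undoing the scaling, $|\nabla u(x)|=|\nabla w(0)|\le C_*$, and since $x\in\Omega'\cap\{u>0\}$ was arbitrary, $|\nabla u|\le C_*$ a.e.\ in $\Omega'$.

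Since $u$ is continuous in $\Omega$ by Corollary \ref{loc-holder}, vanishes outside $\{u>0\}$, and satisfies $|\nabla u|\le C_*$ a.e.\ in $\Omega'\cap\{u>0\}$, a standard absolutely-continuous-on-lines argument for $W^{1,\infty}$ functions yields the desired Lipschitz estimate on $\Omega'$ with constant depending only on the quantities listed. The main technical point in this scheme is ensuring that, after rescaling, the Lipschitz constant of $\tilde p$, the sup-norm of $\tilde f$, and the sup-norm of $w$ in $B_1$ remain controlled uniformly in $x$; this is exactly what is secured by the growth bound of Theorem \ref{pre-lip} (for points close to the free boundary) combined with the interior $L^\infty$ bound of Remark \ref{rema-signo-AC} (for points away from it).
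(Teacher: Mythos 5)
Your argument is correct and is essentially the same as the paper's: the paper derives the corollary by citing Corollary \ref{loc-holder}, Lemma \ref{lemm-min-ig}, Theorem \ref{pre-lip} and Proposition 2.1 in \cite{LW5}, and that last proposition is precisely the rescaling-to-\cite{Fan} lemma you have written out explicitly. One small slip in the bookkeeping: in Case 1 you choose $x_0\in\partial\{u>0\}$ with $|x-x_0|=\rho$, but $\rho=d(x)$ is a minimum of three quantities and can be strictly less than $\mathrm{dist}(x,\partial\{u>0\})$ (when $\mathrm{dist}(\Omega',\partial\Omega'')/2$ realizes the minimum), in which case no such $x_0$ exists; the fix is to split on $\mathrm{dist}(x,\partial\{u>0\})\le\min\{r_0/2,\mathrm{dist}(\Omega',\partial\Omega'')/2\}$ versus not, since in the complementary case $B_\rho(x)\subset\{u>0\}$ with $\rho$ bounded below and your Case 2 estimate applies verbatim.
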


\begin{proof}
The result is a consequence of Corollary \ref{loc-holder}, Lemma \ref{lemm-min-ig} and  Theorem \ref{pre-lip} above, and Proposition 2.1 in  \cite{LW5}.
\end{proof}

Next we have

\begin{theo}\label{nodegdist} Let $p, f, \lambda$ and $u$ be as in Lemma \ref{lemm-min-ig}. Assume moreover that
  $\nabla u\in L^{\infty}(\Omega)$. There exist positive constants
$c_0$ and $\rho$  such that, for every $x\in\Omega'$,
\begin{equation*}
 u(x)\ge c_0 {\rm dist}(x,\{u\equiv 0\}), \quad \mbox{ if } \ {\rm dist}(x,\{u\equiv 0\})\le \rho.
  \end{equation*}
The constants depend only on $p_{\min}, p_{\max}, L,
||f||_{L^{\infty}(\Omega)}, \lone, \ltwo, ||\nabla u||_{
{L^{\infty}}(\Omega)}$ and ${\rm dist}(\Omega',\partial\Omega)$.
 \end{theo}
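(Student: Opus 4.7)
The plan is to proceed by contradiction using a blow-up/rescaling argument, together with an Alt--Caffarelli-type cutoff competitor, where the key mechanism is the strong minimum principle for the limiting $p_0$-Laplacian.

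Suppose the conclusion fails. Then there exist sequences of nonnegative local minimizers $u_k$ of functionals $J_k$ (built from $p_k$, $f_k$, $\lambda_k$ satisfying the stated uniform hypotheses, with a uniform bound $\|\nabla u_k\|_{L^\infty(\Omega_k)}\le M$), and points $x_k\in\Omega_k'\cap\{u_k>0\}$ with $d_k:=\mbox{dist}(x_k,\{u_k=0\})\to0$ and $u_k(x_k)/d_k\to0$. Rescale
\[
\bar u_k(x):=\frac{u_k(x_k+d_kx)}{d_k}.
\]
Then $\bar u_k>0$ on $B_1$, $\bar u_k(0)\to0$, the sequence $\{\bar u_k\}$ is uniformly Lipschitz, and $\bar u_k$ is a nonnegative local minimizer of
\[
\bar J_k(v)=\int\Big[\frac{|\nabla v|^{\bar p_k(x)}}{\bar p_k(x)}+\bar\lambda_k(x)\chi_{\{v>0\}}+d_k\bar f_k(x)\,v\Big]\,dx,
\]
with $\bar p_k(x)=p_k(x_k+d_kx)\to p_0$ uniformly, $\lone\le\bar\lambda_k(x)\le\ltwo$, and $\|d_k\bar f_k\|_{L^\infty}\to0$.

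Since $B_1\subset\{\bar u_k>0\}$, Lemma \ref{lemm-min-ig} applied to $\bar u_k$ yields $\Delta_{\bar p_k(x)}\bar u_k=d_k\bar f_k$ in $B_1$. Invoking Theorem 1.1 in \cite{Fan} exactly as in the proof of Theorem \ref{pre-lip}, the family $\{\bar u_k\}$ is uniformly bounded in $C^{1,\alpha}_{\mathrm{loc}}(B_1)$. Up to a subsequence, $\bar u_k\to\bar u_\infty$ in $C^{1,\alpha}_{\mathrm{loc}}(B_1)$, and the limit satisfies $\Delta_{p_0}\bar u_\infty=0$ in $B_1$, $\bar u_\infty\ge0$, $\bar u_\infty(0)=0$. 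The strong minimum principle for the $p_0$-Laplacian then forces $\bar u_\infty\equiv 0$ in $B_1$, so that $\bar u_k\to0$ and $\nabla\bar u_k\to0$ uniformly on $\overline{B_{3/4}}$.

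Now fix a cutoff $\eta\in C_c^\infty(B_{2/3})$ with $\eta\equiv1$ on $\overline{B_{1/2}}$, $0\le\eta\le1$, $|\nabla\eta|\le C_\eta$, and for small $\epsilon>0$ consider the competitor $\tilde u_k:=(\bar u_k-\epsilon\eta)^+$. For $k$ large enough that $\sup_{B_{2/3}}\bar u_k<\epsilon$, we have $\tilde u_k=\bar u_k$ outside $B_{2/3}$ (hence $\tilde u_k$ is admissible for $\bar J_k$) and $\tilde u_k\equiv0$ on $B_{1/2}$. Because $B_{1/2}\subset\{\bar u_k>0\}$ and $\bar\lambda_k\ge\lone$, the measure term decreases by at least $\lone|B_{1/2}|$. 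The uniform smallness $|\nabla\bar u_k|=o(1)$ on $B_{2/3}$ gives $|\nabla\tilde u_k|\le o(1)+\epsilon C_\eta$, so the gradient contribution satisfies $\int_{B_{2/3}}|\nabla\tilde u_k|^{\bar p_k}/\bar p_k\le C\epsilon^{p_{\min}}+o(1)$; the $f$-term contribution is $O(d_k)$. Choosing first $\epsilon$ small and then $k$ large, we obtain
\[
\bar J_k(\tilde u_k)-\bar J_k(\bar u_k)\le-\tfrac{1}{2}\lone\,|B_{1/2}|<0,
\]
contradicting the minimality of $\bar u_k$. The most delicate step is justifying $\bar u_\infty\equiv 0$ in $B_1$, which requires passing to the limit in the $p_k(x)$-Laplace equation with variable exponent $\bar p_k(x)\to p_0$; as in Theorem \ref{pre-lip}, this relies on the $C^{1,\alpha}$ regularity for equations with nonstandard growth to upgrade weak convergence of gradients to locally uniform convergence.
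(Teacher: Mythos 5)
Your proof is correct and follows the same overall strategy as the paper: a blow-up contradiction argument, identification of the blown-up limit as a nonnegative $p_0$-harmonic function with an interior zero, and then a cutoff competitor that saves more in the measure term $\int \lambda \chi_{\{v>0\}}$ than it costs in gradient energy, contradicting minimality.

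There are two small differences worth noting. First, the paper does not pass all the way to $w_0\equiv 0$: it applies Harnack to $w_0$ on $B_{1/2}$ and then transfers to $w_k$ to obtain the quantitative bound $w_k \le \bar c\, w_k(0)+C_0\delta$ on $B_{1/2}$ for $k$ large, never needing the full conclusion that $w_0$ vanishes identically. You instead invoke the strong minimum principle to get $\bar u_\infty\equiv 0$ and then use the uniform $C^{1,\alpha}$ estimates to upgrade to $\bar u_k\to 0$ and $\nabla\bar u_k\to 0$ uniformly; both routes are valid and of comparable effort. Second, the competitors differ: the paper's is the upper envelope $z_k=\min\bigl(w_k,(\bar c\alpha_k+C_0\delta)\psi\bigr)$, whose gradient on the region of modification is $(\bar c\alpha_k+C_0\delta)\nabla\psi$ and is controlled purely by $C^0$-smallness, whereas yours is $(\bar u_k-\epsilon\eta)^+$, whose gradient is $\nabla\bar u_k-\epsilon\nabla\eta$ and requires the uniform $C^1$-smallness you obtained. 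Both competitors kill the positivity set on an inner ball and cost $O(\epsilon^{p_{\min}})+o(1)$ in Dirichlet energy, giving the same contradiction. Your version is a clean variant; it does rely on the gradient convergence (which you correctly justify through Fan's interior $C^{1,\alpha}$ regularity together with the Lipschitz assumption $\nabla u\in L^\infty$), whereas the paper's upper-envelope competitor would work even without the uniform gradient smallness.
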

\begin{proof}  We will prove the statement for $x\in\Omega'$ such that $u(x)>0$ (otherwise there is nothing to prove).
Let us suppose by contradiction that there exist a sequence of
nonnegative local minimizers $u_k\in W^{1,p_k(\cdot)}(\Omega)\cap L^{\infty}(\Omega)$ corresponding to
functionals $J_k$ given by functions $p_k$, $f_k$ and $\lambda_k$,
with $p_{\min}\leq p_k(x)\leq p_{\max}$, $\|\nabla
p_k\|_{L^{\infty}}\leq L$, $||f_k||_{L^{\infty}(\Omega)}\le L_1$,
 $\lone\le\lambda_k(x)\le\ltwo$,  $||\nabla u_k||_{L^{\infty}(\Omega)}\le L_2$
and points $x_k\in\Omega'$, with $u_k(x_k)>0$,  such that
$$
d_k={\rm dist}(x_k,\{u_k\equiv 0\})\to 0 \quad \mbox{ and } \quad
\frac{u_k(x_k)}{d_k}\to 0.
$$
Let us define in $B_1$, for $d_k$ small,
$w_k(x)=\frac1{d_k}u_k(x_k+d_k x)$, ${\bar p}_k(x)=p_k(x_k+d_k
x)$, ${\bar f}_k(x)=d_k f_k(x_k+d_k x)$ and ${\bar
\lambda}_k(x)={\lambda}_k(x_k+d_k x)$. Then $p_{\min}\leq {\bar
p}_k(x)\leq p_{\max}$, $\|\nabla {\bar
p}_k\|_{L^{\infty}(B_1)}\leq L d_k$,
$\lone\le{\bar\lambda}_k(x)\le\ltwo$ and $||{\bar
f}_k||_{L^{\infty}(B_1)}\le L_1 d_k$. Moreover, $w_k$ is a
nonnegative local minimizer of the functional
$$
{\bar J}_k(v)=\int_{B_1}\Big(\frac{|\nabla v|^{{\bar
p}_k(x)}}{{\bar p}_k(x)}+{\bar\lambda}_k(x)\chi_{\{v>0\}}+{\bar
f}_k\,v\Big)\,dx.
$$
Since $w_k>0$ in $B_1$, we have $\Delta_{{\bar p}_k(x)}w_k={\bar
f}_k$ in $B_{1}$ (see \eqref{min-ig}). In addition,
$w_k(0)=\frac{u_k(x_k)}{d_k}\to 0$ and $||\nabla
w_k||_{L^{\infty}(B_1)}\le L_2$. Then, by interior H\"older
gradient estimates it follows that, for a subsequence, $w_k\to
w_0$ and $\nabla w_k\to \nabla w_0$ uniformly on compact subsets
of $B_1$. Moreover, for a subsequence, ${\bar f}_k\to 0$ and
${\bar p}_k\to p_0$ uniformly on compact subsets of $B_1$, with
$p_0$ constant. This implies that $\Delta_{p_0} w_0=0$  in $B_1$.

By Harnack's inequality there exists a constant $\overline c>0$,
depending on $N$ and $p_0$, such that
$$
 {\sup}_{B_{1/2}} w_0\le {\overline c}\, {\inf}_{B_{1/2}} w_0
$$
and therefore, given $\delta>0$, there exists $k_0$ such that for
$k\ge k_0$
$$
 {\sup}_{B_{1/2}} w_k\le {\overline c}\, {\inf}_{B_{1/2}} w_k + C_0\delta,
$$
for a constant $C_0$ depending on $N$ and $p_0$. In particular we
have, for $k$ large,
$$
 w_k(x)\le {\overline c}\, w_k(0)+ C_0 \delta\quad \mbox{in }B_{1/2}.
$$

Let ${\alpha}_k>0$ be such that $u_k(x_k)={\alpha}_k d_k$, this
is,  ${\alpha}_k=w_k(0)$. Let $\psi\in C^\infty(\overline B_1)$
such that $\psi\equiv0$ in $B_{1/4}$, $\psi\equiv1$ in $\overline
B_1\setminus B_{1/2}$, $0\le\psi\le 1$ and let
$$
z_k(x)=\begin{cases}
\mbox{min\,}\Big(w_k(x),(\overline c{\alpha}_k+C_0 \delta)\psi\Big)\quad&\mbox{in }B_{1/2},\\
w_k(x)\quad&\mbox{outside  }B_{1/2}.
\end{cases}
$$
Then, $z_k\in W^{1,{\bar p}_k(\cdot)}(B_{1})$ and $z_k$ coincides
with $w_k$ on $\partial B_1$ so that there holds that ${\bar
J}_k(z_k)\ge{\bar J}_k(w_k)$.

Let ${\mathcal D}_k=B_{1/2}\cap\{w_k>(\overline c{\alpha}_k+C_0
\delta)\psi\}$. Observe that $z_k\le w_k$, so that
$\chi_{\{z_k>0\}}\le \chi_{\{w_k>0\}}$. In addition, $w_k>0$ in
$B_{1/4}$, $z_k=0$ in $B_{1/4}$ and $B_{1/4}\subset{\mathcal
D}_k$. Therefore, if $C_0 \delta\le\frac12$ and $k$ is large
enough
 so that $\overline c{\alpha}_k\le\frac12$, we get
\begin{align*}
&\lone|B_{1/4}|\le\int_{{\mathcal D}_k}{\bar\lambda}_k(x)\big\{\chi_{\{w_k>0\}}-\chi_{\{z_k>0\}}\big\}\,dx\\
&\le\int_{{\mathcal D}_k}\frac{(\overline c{\alpha}_k+C_0
\delta)^{p_{\min}}}{p_{\min}}|\nabla\psi|^{{\bar p}_k} +L_1
d_k\int_{{\mathcal D}_k}\big[(\overline c{\alpha}_k+C_0
\delta)\psi+w_k\big]\,dx\le C (\overline c{\alpha}_k+C_0 \delta),
\end{align*}
with $C=C(\psi,p_{\min}, p_{\max}, L_1)$. So that
\begin{equation*}
\lone|B_{1/4}|\le  C(\overline c{\alpha}_k+C_0 \delta),
\end{equation*}
and, if  $C C_0\delta\le \frac{1}{2}\lone|B_{1/4}|$, it
follows that
$$
\frac{1}{2}\lone|B_{1/4}|\le {\bar C}{\alpha}_k ={\bar
C}\frac{u_k(x_k)}{d_k}\to 0,
$$
which is a contradiction.
\end{proof}

We also have

\begin{lemm}\label{itera1} Let $p$ and $f$ be as in Theorem \ref{existence-minimizers-AC}.
Let
$\Omega'\subset\subset\Omega$ and  $u\in C(\Omega)$, $u\ge 0$,
$\nabla u \in L^{\infty}(\Omega)$ with $\Delta_{p(x)} u  = f$ in
$\{u>0\}$ be such  that there exist positive constants $c_0$ and
$\rho$  such that, for every $x\in\Omega'$, there holds that
$u(x)\ge c_0 {\rm dist}(x,\{u\equiv 0\})$ if ${\rm
dist}(x,\{u\equiv 0\})\le \rho$. Then, there exist positive
constants $\delta_0$ and $\rho_0$
 such that for every $x\in\Omega'\cap\{u>0\}$ with $d(x)={\rm dist}(x,\{u\equiv 0\})\le \rho_0$,
we have
$$
\sup_{B_{d(x)}(x)} u \geq (1+\delta_0) u(x).
$$
The constants depend only on $p_{\min}, p_{\max}, L,
||f||_{L^{\infty}(\Omega)}, ||\nabla u||_{L^{\infty}(\Omega)},
c_0,\rho$ and ${\rm dist}(\Omega',\partial\Omega)$.
\end{lemm}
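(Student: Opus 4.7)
The plan is to argue by contradiction through a blow-up/compactness scheme, in the same spirit as the proofs of Theorem \ref{pre-lip} and Theorem \ref{nodegdist}. Suppose the conclusion fails. Then we obtain a sequence of functions $u_k$ (with corresponding $p_k,f_k$ under the uniform assumptions) and points $x_k\in\Omega'\cap\{u_k>0\}$ with $d_k:=d(x_k)\to 0$, yet $\sup_{B_{d_k}(x_k)}u_k\le (1+1/k)\,u_k(x_k)$. Choose $y_k\in\{u_k\equiv 0\}$ with $|y_k-x_k|=d_k$ and rescale by setting $w_k(y):=u_k(x_k+d_k y)/d_k$ on $B_1$, together with $\bar p_k(y):=p_k(x_k+d_k y)$ and $\bar f_k(y):=d_k f_k(x_k+d_k y)$. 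Then $w_k$ is uniformly Lipschitz with constant $\|\nabla u_k\|_{L^\infty(\Omega)}\le L_2$, $\bar p_k\to p_0$ (some constant in $[p_{\min},p_{\max}]$) and $\bar f_k\to 0$ uniformly on $B_1$, and $w_k$ solves $\Delta_{\bar p_k(x)}w_k=\bar f_k$ in $\{w_k>0\}\cap B_1$.

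The key is to extract enough information on the blow-up limit $w_0$. From the linear non-degeneracy hypothesis applied at points $x_k+d_k y\in\Omega'$ (which holds for $k$ large since $d_k\to 0$ and $d_k(1+|y|)\le\rho$), we obtain $w_k(y)\ge c_0(1-|y|)$ on $B_1$; in particular $w_k(0)\ge c_0$. The contradiction hypothesis rewrites as $\sup_{B_1}w_k\le(1+1/k)w_k(0)$, and the Lipschitz bound gives $w_k(0)\le L_2$. The point $z_k:=(y_k-x_k)/d_k\in\partial B_1$ satisfies $w_k(z_k)=0$. By Arzelà–Ascoli and passing to a subsequence, $w_k\to w_0$ uniformly on $\overline{B_1}$ and $z_k\to z_0\in\partial B_1$, so the limit $w_0$ is Lipschitz on $\overline{B_1}$, satisfies $w_0\ge c_0(1-|y|)>0$ on $B_1$, $w_0(0)\ge c_0$, $\sup_{B_1}w_0\le w_0(0)$, and $w_0(z_0)=0$. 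Since $w_k$ is bounded below by a positive constant on each compact subset of $B_1$ for large $k$, the interior $C^{1,\alpha}$ estimates for the $p(x)$-Laplace equation (Theorem 1.1 in \cite{Fan}) yield $\nabla w_k\to\nabla w_0$ locally uniformly in $B_1$ and, in the limit, $\Delta_{p_0}w_0=0$ in $B_1$.

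To close the argument we invoke the strong maximum principle: $w_0$ is a positive $p_0$-harmonic function on $B_1$ attaining its maximum at the interior point $0$, hence is constant, and by continuity on $\overline{B_1}$ we get $w_0\equiv w_0(0)\ge c_0>0$ on $\overline{B_1}$. This contradicts $w_0(z_0)=0$. The step I expect to be most delicate is ensuring the passage to the limit in the equation so that $w_0$ is genuinely $p_0$-harmonic on the whole open ball $B_1$; this is exactly where the linear non-degeneracy hypothesis pays off, providing a quantitative positive lower bound on $w_k$ on compact subsets of $B_1$ which legitimizes the use of interior regularity for the $\bar p_k(x)$-Laplace equation.
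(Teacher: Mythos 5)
Your proof is correct and follows essentially the same blow-up/compactness route as the paper. The only cosmetic difference is the choice of normalization: you rescale by $d_k$, getting $w_k(y)=u_k(x_k+d_ky)/d_k$ and using the linear nondegeneracy to bound $w_k(0)\ge c_0$ from below, while the paper rescales by $u_k(x_k)$ (so that $w_k(0)=1$ exactly, at the price of introducing the coefficient $a_k(x)=(u_k(x_k)/d_k)^{\bar p_k(x)-1}$ in the equation); your version keeps the limiting equation coefficient-free, and both end with the strong maximum principle on the $p_0$-harmonic limit and the contradiction at $z_0\in\partial B_1$. One small note: the positive lower bound $w_k\ge c_0(1-|y|)$ throughout $B_1$ is not actually needed to invoke Fan's interior $C^{1,\alpha}$ estimates — it suffices that the equation $\Delta_{\bar p_k(x)}w_k=\bar f_k$ holds in $B_1$ (which it does since $B_{d_k}(x_k)\subset\{u_k>0\}$) together with the uniform $L^\infty$ and Lipschitz bounds; the nondegeneracy hypothesis is really only used to ensure $w_k(0)$ (and hence $w_0(0)$) is bounded away from zero, exactly as the paper uses it at the center point $x_k$.
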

\begin{proof}
Suppose by contradiction that there exist  functions $u_k$, $p_k$,
$f_k$, with $1<p_{\min}\le p_k(x)\le p_{\max}<\infty$, $\|\nabla
p_k\|_{L^{\infty}}\leq L$, $||f_k||_{L^{\infty}(\Omega)}\le L_1$,
$u_k\in C(\Omega)$,  $u_k\ge 0$, $||\nabla
u_k||_{L^{\infty}(\Omega)}\leq L_2$, with $\Delta_{p_k(x)} u_k  =
f_k$ in $\{u_k>0\}$ and
 $u_k(x)\ge c_0 {\rm dist}(x,\{u_k\equiv 0\})$ if ${\rm dist}(x,\{u_k\equiv 0\})\le \rho$ and $x\in\Omega'$, and sequences
 $\delta_k\to 0$, $\rho_k\to 0$ and $x_k\in \Omega'\cap\{u_k>0\}$ with
$d_k={\rm dist}(x_k,\{u_k\equiv 0\})\le\rho_k$ such that
$$
\sup _{ B_{d_k}(x_k)} u_k\leq (1+\delta_k) u_k(x_k).$$

Take $w_k(x)=\displaystyle\frac{u_k(x_k+d_k x)}{u_k(x_k)}$. Then,
$w_k(0)=1$ and
$$
\max_{\overline{B_1}} w_k\leq (1+\delta_k), \quad w_k>0\quad
\mbox{ and }\quad{\rm
div}\Big(\Big(\frac{u_k(x_k)}{d_k}\Big)^{{\bar p}_k(x)-1}|\nabla
w_k|^{{\bar p}_k(x)-2}\nabla w_k\Big)={\bar f}_k\quad\mbox{in
}B_1,
$$
where ${\bar p}_k(x)=p_k(x_k+d_k x)$ and ${\bar f}_k(x)=d_k
f_k(x_k+d_k x)$. On the other hand,  we have
$$c_0\le\frac{u_k(x_k)}{d_k}\leq L_2, \qquad \|\nabla w_k\|_{L^{\infty}(B_1)}\le L_2\frac{d_k}{u_k(x_k)}\leq\frac{L_2}{c_0}.$$

Then, using the gradient estimates in  \cite{Fan}, we deduce that,
for a subsequence, $\frac{u_k(x_k)}{d_k}\to a\in[c_0,L_2]$, $w_k
\to \overline{w}$ and ${\bar p}_k\to p_0\in \R$ uniformly in
$\overline{B}_1$ and $\nabla w_k \to \nabla\overline{w}$
uniformly on compact subsets of ${B}_1$.

There holds that $\Delta_{p_0}\overline{w}=0$ in ${B}_1$,
$\overline{w}(0)=1$ and $\overline{w}\le 1$ in ${B}_1$. Therefore
$\overline{w}\equiv 1$ in $\overline B_1$.

Let $y_k\in \partial \{u_k>0\}$ with $|x_k-y_k|=d_k$. Then,
 if
$z_k=\frac{y_k-x_k}{d_k}$, we have
$$w_k(z_k)=\frac{u_k(y_k)}{u_k(x_k)}=0$$ and we may assume that
$z_k\to \bar{z} \in \partial B_1$. Thus,
$1=\overline{w}(\bar{z})=0$. This is a contradiction, and the
lemma is proved.
\end{proof}

As a consequence of the previous results, we obtain

\begin{theo}\label{nodegfinal}
Let $p, f, \lambda$ and $u$ be as in Theorem \ref{nodegdist}.
 Let $\Omega'\subset\subset\Omega$. There exist  constants
 $c>0$, $r_0>0$ such that if  $x_0\in \Omega'\cap\partial\{u>0\}$ and $r\le r_0$ then
$$\sup_{B_{r}(x_0)} u\geq c r.$$
The constants depend only on $p_{\min}, p_{\max}, L,
||f||_{L^{\infty}(\Omega)}, \lone, \ltwo, ||\nabla u||_{
{L^{\infty}}(\Omega)}$ and ${\rm dist}(\Omega',\partial\Omega)$.
\end{theo}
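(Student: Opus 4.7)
The plan is to combine Theorem \ref{nodegdist} (distance nondegeneracy) with the one-step growth estimate of Lemma \ref{itera1} in a standard iteration argument: if $\sup_{B_r(x_0)} u$ were too small, we could start from a positive value of $u$ near $x_0$ and iteratively generate points at which $u$ is multiplied by $1+\delta_0$ at each step, while the whole walk remains inside $B_r(x_0)$; since $u$ is locally bounded, this must fail.

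First, I would fix $\Omega''$ with $\Omega' \subset\subset \Omega'' \subset\subset \Omega$ and take $c_0, \rho$ from Theorem \ref{nodegdist} and $\delta_0, \rho_0$ from Lemma \ref{itera1} applied on $\Omega''$. Then I would argue by contradiction: suppose that for some $x_0 \in \Omega' \cap \partial\{u>0\}$ and some small $r$,
\begin{equation*}
\sup_{B_r(x_0)} u < \sigma r,
\end{equation*}
for a constant $\sigma>0$ to be chosen. Since $x_0 \in \partial\{u>0\}$ and $u\in C(\Omega)$ with $u(x_0)=0$, there is a point $y_0 \in B_{r/4}(x_0) \cap \{u>0\}$.

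The iteration proceeds as follows. Set $d(y)=\mathrm{dist}(y,\{u\equiv 0\})$. As long as $y_k \in B_r(x_0) \cap \Omega''$, the bound $\sup_{B_r(x_0)}u < \sigma r$ together with Theorem \ref{nodegdist} gives $d(y_k) \le u(y_k)/c_0 < \sigma r/c_0$. Choosing $r \le r_1$ small so that $\sigma r/c_0 \le \min(\rho,\rho_0)$ ensures the hypotheses of both Theorem \ref{nodegdist} and Lemma \ref{itera1} are in force. Lemma \ref{itera1} then provides $y_{k+1} \in \overline{B_{d(y_k)}(y_k)}$ with
\begin{equation*}
u(y_{k+1}) \ge (1+\delta_0)\, u(y_k), \qquad |y_{k+1}-y_k| \le d(y_k) \le u(y_k)/c_0.
\end{equation*}
Iterating the first inequality yields $u(y_k) \ge (1+\delta_0)^k u(y_0)$, hence $u(y_j) \le (1+\delta_0)^{-(k-j)} u(y_k)$, and the telescoping estimate
\begin{equation*}
|y_{k+1}-y_0| \le \sum_{j=0}^{k} d(y_j) \le \frac{1}{c_0}\sum_{j=0}^{k} (1+\delta_0)^{-(k-j)} u(y_k) \le \frac{(1+\delta_0)}{c_0\delta_0}\, u(y_k) < \frac{(1+\delta_0)\sigma r}{c_0\delta_0}
\end{equation*}
follows. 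Fixing $\sigma$ so that $\sigma(1+\delta_0)/(c_0\delta_0) < 1/4$ gives $|y_{k+1}-x_0| < r/2$, so all $y_k$ remain in $B_{r/2}(x_0) \subset\subset \Omega''$ and the iteration never stops. This forces $u(y_k)\to\infty$, contradicting $u(y_k) < \sigma r$.

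The step I expect to require most care is choosing the constants in the right order so that both ``stopping conditions'' of the iteration (escaping $B_r(x_0)$, and having $d(y_k)$ exceed $\min(\rho,\rho_0)$) are simultaneously ruled out by the contradiction hypothesis; this is exactly the role of the telescoping sum, which exploits the geometric growth of $u(y_k)$ to make the displacement bound sharper than the naive $|y_{k+1}-x_0| \le 2|y_k-x_0|$. The dependence of the constants in the statement on $N,p_{\min},p_{\max},L,\|f\|_{L^\infty},\lone,\ltwo,\|\nabla u\|_{L^\infty},\mathrm{dist}(\Omega',\partial\Omega)$ is then inherited directly from the corresponding dependencies in Theorem \ref{nodegdist} and Lemma \ref{itera1}.
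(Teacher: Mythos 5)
Your proposal is essentially the same as the paper's argument: the paper's Step 1 also iterates Lemma \ref{itera1} to build a polygonal chain (following Theorem 1.9 of \cite{CS}) and uses the same telescoping geometric sum to confine the chain while $u$ grows by a fixed factor at each step; you just phrase it as a direct contradiction rather than the paper's constructive dichotomy in Steps 1--2. One small fix: when verifying that $d(y_k)\le\min(\rho,\rho_0)$ at each step, do not route through the inequality $d(y_k)\le u(y_k)/c_0<\sigma r/c_0$, since that already presupposes Theorem \ref{nodegdist} (whose hypothesis is $d(y_k)\le\rho$); instead use the direct bound $d(y_k)\le|y_k-x_0|<r/2$ (because $x_0\in\{u\equiv 0\}$ and $y_k\in B_{r/2}(x_0)$) and take $r_0\le 2\min(\rho,\rho_0)$, after which the application of Theorem \ref{nodegdist} and Lemma \ref{itera1} is legitimate.
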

\begin{proof} We will follow the ideas of Theorem 1.9 in \cite{CS}.

\noindent{Step 1.} We will prove that there exist positive
constants
 $\bar c$, $\bar r$ and $\bar \rho$ such that if  $x_0\in \Omega'\cap\{u>0\}$, ${\rm dist}(x_0,\{u\equiv 0\})\le \bar\rho$ and $r\le \bar r$, then
$$\sup_{B_{r}(x_0)} u\geq {\bar c} r.$$

In fact, let $\rho_1={\rm dist}(\Omega',\partial\Omega)$ and
$\tilde\Omega=B_{{\rho_1}/2}(\Omega')$, so
$\Omega'\subset\subset\tilde\Omega\subset\subset\Omega$.

By Theorem \ref{nodegdist} and Lemma \ref{itera1} (applied to
points in $\tilde\Omega$), there exist positive constants $c_0$
and $\rho$  such that, for every $x\in\tilde\Omega$ with ${\rm
dist}(x,\{u\equiv 0\})\le \rho$,
\begin{equation*}
 u(x)\ge c_0 {\rm dist}(x,\{u\equiv 0\}),
\end{equation*}
and positive constants $\delta_0$ and $\rho_0$
 such that for every $x\in\tilde\Omega\cap\{u>0\}$ with $d(x)={\rm dist}(x,\{u\equiv 0\})\le \rho_0$,
we have
$$
\sup_{B_{d(x)}(x)} u \geq (1+\delta_0) u(x).
$$
The constants depend only on $p_{\min}, p_{\max}, L,
||f||_{L^{\infty}(\Omega)}$, $||\nabla u||_{
{L^{\infty}}(\Omega)}$,   ${\rm
dist}(\tilde\Omega,\partial\Omega)=\frac12{\rm
dist}(\Omega',\partial\Omega)$, $\lone$ and $\ltwo$.

Let $\bar r=\min\{\frac12{\rm
dist}(\Omega',\partial\tilde\Omega),\rho, \rho_0\}$,
$\bar\rho=\rho$ and $r\le \bar r$. Let  $x_0\in
\Omega'\cap\{u>0\}$ such that  $d_0={\rm dist}(x_0,\{u\equiv
0\})\le \bar\rho$, then
\begin{equation*}
 u(x_0)\ge c_0 d_0.
\end{equation*}
There are two possibilities:

\item[i)] $d_0\ge \frac{r}{8}$.

In this case  $u(x_0)\ge c_0 \frac{r}{8}$ and the result follows.
\item[ii)] $d_0< \frac{r}{8}$.

In this case, proceeding as in \cite{CS}, we construct a polygonal
that never leaves $B_r(x_0)$, starting at $x_0$  and finishing at
$\tilde x\in B_r(x_0)$, such that $u(\tilde x)\ge \tilde c r$,
with  an explicit $\tilde c>0$ depending on the constants
mentioned above. We refer to \cite{CS} for the details. In the
present situation, the mean value argument employed in \cite{CS}
is replaced by the argument in Lemma \ref{itera1}.

\noindent{Step 2.} Now let $\bar r$ and $\bar \rho$ as above,
$r\le \bar r$ and $x_0\in \Omega'\cap\partial\{u>0\}$. We take
$x_1\in B_{\frac{r}{2}}(x_0)\cap\{u>0\}\cap\Omega'$. Then, ${\rm
dist}(x_1,\{u\equiv 0\})\le |x_1-x_0|\le\bar\rho$ and thus, from
the result in Step 1,
$$
\sup_{B_{r}(x_0)} u\ge\sup_{B_{\frac{r}{2}}(x_1)} u\geq \bar c
\frac{r}{2}.
$$
This completes the proof.
\end{proof}

The following result in the section is

\begin{theo}\label{densprop} Let $p, f, \lambda$ and $u$ be as in Theorem \ref{nodegdist}.
Let $\Omega'\subset\subset\Omega$. There exist constants
$\tilde{c}\in (0,1)$ and ${\tilde r}_0>0$  such that, if
$x_0\in\Omega'\cap\partial\{u>0\}$ with $B_r(x_0)\subset \Omega'$
and $r\le {\tilde r}_0$, there holds
\begin{equation*}
 \frac{|B_r(x_0)\cap\{u>0\}|}{|B_r(x_0)|} \leq 1-\tilde{c}.
  \end{equation*}
  The constants depend only on $p_{\min}, p_{\max}, L, ||f||_{L^{\infty}(\Omega)}, \lone, \ltwo, ||\nabla u||_{L^{\infty}(\Omega)}$
  and ${\rm dist}(\Omega',\partial\Omega)$.
 \end{theo}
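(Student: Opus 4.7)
I adapt the classical Alt--Caffarelli density argument to the inhomogeneous $p(x)$-Laplace setting. Let $v\in W^{1,p(\cdot)}(B_r(x_0))$ solve $\Delta_{p(x)}v=f$ in $B_r(x_0)$ with $v-u\in W_0^{1,p(\cdot)}(B_r(x_0))$. Since Lemma \ref{lemm-min-desig} gives $\Delta_{p(x)}u\ge f$ weakly, testing the difference of the two weak (in)equalities against $(u-v)^+\in W_0^{1,p(\cdot)}(B_r(x_0))$ and invoking the strict monotonicity \eqref{desigualdades} yields $v\ge u\ge 0$ in $B_r(x_0)$. Hence $\chi_{\{v>0\}}-\chi_{\{u>0\}}$ is supported on $B_r(x_0)\cap\{u=0\}$, and the minimality $J(u)\le J(v)$ combined with the equation for $v$ (used to rewrite $\int f(u-v)=-\int|\nabla v|^{p(x)-2}\nabla v\cdot\nabla(u-v)$) and the convexity inequalities \eqref{desigualdades}, exactly as in \eqref{standard-hold}--\eqref{rn3}, produces the Caccioppoli-type estimate
\begin{equation*}
\int_{B_r(x_0)\cap\{p\ge 2\}}|\nabla(v-u)|^{p(x)}+\int_{B_r(x_0)\cap\{p<2\}}|\nabla(v-u)|^2(|\nabla u|+|\nabla v|)^{p(x)-2}\,dx\le C\,|B_r(x_0)\cap\{u=0\}|.
\end{equation*}

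Next, the hypothesis $\nabla u\in L^\infty$ combined with the interior $C^{1,\alpha}$ estimates for $v$ from \cite{Fan} (plus a standard barrier argument up to $\partial B_r(x_0)$ using $v=u$ on the boundary) gives $|\nabla(v-u)|\le M$ uniformly. Inserting this $L^\infty$ bound on $|\nabla u|+|\nabla v|$ into the weighted integral on $\{p<2\}$ upgrades the previous bound to $\int|\nabla(v-u)|^{p_{\max}}\le C|B_r(x_0)\cap\{u=0\}|$. Since $v-u=0$ on $\partial B_r(x_0)$, Poincar\'e then gives $\int(v-u)^{p_{\max}}\le C r^{p_{\max}}|B_r(x_0)\cap\{u=0\}|$. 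A standard ``Lipschitz bump'' argument (if $\sup_{B_r(x_0)}(v-u)=M_0$, then $v-u\ge M_0/2$ on a ball of radius $\sim M_0/M$ centered at the sup point) converts this integral estimate into the pointwise decay
\begin{equation*}
\sup_{B_r(x_0)}(v-u)\le C\,r\,\delta^{1/(N+p_{\max})},\qquad \delta:=\frac{|B_r(x_0)\cap\{u=0\}|}{|B_r(x_0)|}.
\end{equation*}

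To close the argument I need a matching \emph{lower} bound $v(x_0)\ge c_0 r$ with $c_0>0$ independent of $r$. Theorem \ref{nodegfinal} gives $\sup_{B_r(x_0)}u\ge cr$ and, since $v\ge u$, also $\sup_{B_r(x_0)}v\ge cr$. Rescaling $V(y):=v(x_0+ry)/r$ on $B_1$ transforms the equation into $\Delta_{\bar p}V=\bar f$ with $\|\bar f\|_{L^\infty(B_1)}\le r\|f\|_{L^\infty}$, which is negligible for $r$ small. I would then apply Proposition \ref{harnack} on a sub-ball $B_\rho(y_1)\subset B_1$ anchored at a point $y_1$ with $V(y_1)\gtrsim c$ (choosing $\rho$ of order $c$ so that the additive term $+\rho$ is absorbed into $V(y_1)/C$), and iterate on an $O(1)$ chain of such balls linking $y_1$ to $0$, obtaining $V(0)\ge c_0>0$ uniformly. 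Hence $v(x_0)\ge c_0 r$; but $u(x_0)=0$ forces $v(x_0)=(v-u)(x_0)\le Cr\,\delta^{1/(N+p_{\max})}$, so $c_0 r\le Cr\,\delta^{1/(N+p_{\max})}$ and $\delta\ge(c_0/C)^{N+p_{\max}}=:\tilde c>0$. The main technical obstacle is precisely this last step: the additive term in the Harnack inequality a priori competes with the non-degeneracy lower bound $cr$, and the estimate closes only thanks to the scaling observation that makes the effective right-hand side $r\bar f$ arbitrarily small, allowing the iterated Harnack chain to propagate positivity from the sup point of $V$ all the way down to the origin.
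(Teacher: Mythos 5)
Your strategy is genuinely different from the paper's: you propose a direct quantitative estimate, while the paper argues by compactness and contradiction (it rescales a putative sequence of failing balls, extracts a uniform limit $u_0$ with $\Delta_{p_0}u_0=0$, and contradicts the strong maximum principle using nondegeneracy). The first half of your argument is sound: the comparison $v\ge u$, the Caccioppoli bound obtained from minimality plus the equation for $v$ and the inequalities \eqref{desigualdades}, the upgrade via the $L^\infty$ gradient bound and Poincar\'e, and the Lipschitz-bump step together do yield $(v-u)(x_0)\le Cr\,\delta^\theta$ for some $\theta>0$, after restricting attention to $B_{r/2}(x_0)$ where the interior $C^{1,\alpha}$ estimates of \cite{Fan} control $|\nabla v|$ (you cannot control $|\nabla v|$ up to $\partial B_r(x_0)$, since $u|_{\partial B_r(x_0)}$ is only Lipschitz, but working in the half-ball is enough).

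The gap is in the lower bound $v(x_0)\ge c_0 r$. Your Harnack-chain argument does not close with Proposition \ref{harnack} as stated. That proposition gives $\sup_{B_{3\rho/4}}V\le C\bigl[\inf_{B_{3\rho/4}}V+\rho\bigr]$, where the additive term is the geometric quantity $\rho$ itself, \emph{not} a quantity proportional to $\|\bar f\|$. Taking $r\to 0$ makes $\|\bar f\|\to 0$ and $\|\nabla\bar p\|\to 0$, but in the rescaled unit ball $\rho$ remains an $O(1)$ length, so the additive term does not shrink; your phrase ``the estimate closes thanks to the scaling observation that makes the effective right-hand side $r\bar f$ arbitrarily small'' misattributes the origin of the additive term. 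If you track the chain recursion $m_{i+1}\ge m_i/C-\rho$ you see that after two or three steps the lower bound becomes negative unless the nondegeneracy constant $c$ happens to dominate the Harnack constant $C$ — which there is no reason to expect. To repair this you would need a sharper Harnack inequality in which the additive term is controlled by $\|f\|$ and $\|\nabla p\|$ (so that it vanishes in the rescaling limit), which is not what the paper records; or you would need a compactness argument for this single step, which is essentially what the paper's proof of Theorem \ref{densprop} already does globally, and more cleanly, by passing to a $p_0$-harmonic limit $u_0\ge 0$ with $u_0(0)=0$ and $\sup_{B_{1/4}}u_0>0$ and invoking the strong maximum principle — avoiding Harnack chains altogether.
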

\begin{proof} Let us suppose by contradiction
that there exist a sequence of nonnegative local minimizers
$u_k\in  W^{1,p_k(\cdot)}(\Omega)\cap L^{\infty}(\Omega)$   corresponding to functionals $J_k$ given by
functions $p_k$, $f_k$ and $\lambda_k$, with $p_{\min}\leq
p_k(x)\leq p_{\max}$, $\|\nabla p_k\|_{L^{\infty}}\leq L$,
$||f_k||_{L^{\infty}(\Omega)}\le L_1$,
 $\lone\le\lambda_k(x)\le\ltwo$,  $||\nabla u_k||_{L^{\infty}(\Omega)}\le L_2$
 and balls $B_{r_k}(x_k)\subset\Omega'$ with $x_k\in\partial\{u_k>0\}$ and $r_k\to 0$, such
that
$$
\frac{|B_{r_k}(x_k)\cap\{u_k=0\}|}{|B_{r_k}(x_k)|}\rightarrow 0
$$
and
$$\sup_{B_{r_k\sigma}(x_k)} u_k \geq c r_k\sigma, \quad\mbox{ for } 0<\sigma <1, $$
where $c$ is the positive constant given by Theorem
\ref{nodegfinal}.

Let ${\bar u}_k(x)=\frac{u_k(x_k+r_k x)}{r_k}$, ${\bar
p}_k(x)=p_k(x_k+r_k x)$ and ${\bar f}_k(x)=r_k f_k(x_k+r_k x)$.
Then $p_{\min}\leq {\bar p}_k(x)\leq p_{\max}$, $\|\nabla {\bar
p}_k\|_{L^{\infty}(B_1)}\leq L r_k$, $||{\bar
f}_k||_{L^{\infty}(B_1)}\le L_1 r_k$,
 $0\in\partial\{{\bar u}_k>0\}$,
$$
|B_1\cap\{{\bar u}_k=0\}|=\ep_k\rightarrow 0,
$$
\begin{equation}\label{nodeg-uk}
\sup_{B_{\sigma}} {\bar u}_k \geq c \sigma, \quad\mbox{ for }
0<\sigma <1,
\end{equation}
and
\begin{equation*}
\Delta_{{\bar p}_k(x)}{\bar u}_k\ge{\bar f}_k\ \quad \mbox{ in }
B_{1/2}.
\end{equation*}

Let us take $v_k\in W^{1,{\bar p}_k(\cdot)}(B_{1/2})$, such that
\begin{equation}\label{ecvk}
\Delta_{{\bar p}_k(x)} v_k={\bar f}_k\ \quad \mbox{ in }
B_{1/2},\qquad v_k-{\bar u}_k\in W^{1,{\bar
p}_k(\cdot)}_0(B_{1/2}).
\end{equation}
Observe that there holds that $||{\bar
u}_k||_{L^{\infty}(B_{1/2})}\le L_2/2$ implying that
\begin{equation}\label{cota-ecvk}
||v_k||_{L^{\infty}(B_{1/2})}\le \bar C\ \quad \mbox{ with }
\quad\bar C=\bar C(L, p_{\min},  L_1, L_2),
\end{equation}
(this estimate follows from Lemma \ref{ppio-max-con-a} and Remark \ref{ppio-max-gral}, if $k$ is large enough).

Since $v_k\ge {\bar u}_k$ then $0\le\chi_{\{v_k>0\}}-\chi_{\{{\bar
u}_k>0\}}\le \chi_{\{{\bar u}_k=0\}}$ and therefore, using that
${\bar u}_k$ are nonnegative local minimizers, we get
\begin{equation}\label{baruk-min}
\int_{B_{1/2}}\Big(\frac{|\nabla {\bar u}_k|^{{\bar
p}_k(x)}}{{\bar p}_k(x)}-\frac{|\nabla v_k|^{{\bar p}_k(x)}}{{\bar
p}_k(x)}\Big) \le \ltwo |B_1\cap\{{\bar u}_k=0\}| +L_1r_k
\int_{B_{1/2}}|{\bar u}_k - v_k|.
\end{equation}

Applying \eqref{cota-ecvk}, we now obtain
\begin{equation}\label{cotadif}
\int_{B_{1/2}}\Big(\frac{|\nabla {\bar u}_k|^{{\bar
p}_k(x)}}{{\bar p}_k(x)}-\frac{|\nabla v_k|^{{\bar p}_k(x)}}{{\bar
p}_k(x)}\Big) \le C(\ep_k +L_1r_k).
\end{equation}

We claim that
\begin{equation}\label{wk}
\int_{B_{1/2}}|\nabla {\bar u}_k-\nabla v_k|^{{\bar
p}_k(x)}\,dx\to 0.
\end{equation}

In fact, let $u^s(x)=s {\bar u}_k(x)+(1-s) v_k(x)$. By using
\eqref{ecvk} and the inequalities in \eqref{desigualdades}, we get
\begin{equation*}\label{standard}\begin{aligned}
&\int_{B_{1/2}} \frac{|\nabla {\bar u}_k|^{{\bar p}_k(x)}}{{\bar p}_k(x)}- \frac{|\nabla v_k|^{{\bar p}_k(x)}}{{{\bar p}_k(x)}}+ \int_{B_{1/2}}{\bar f}_k({\bar u}_k-v_k)=\\
&\quad\int_0^1 \frac{ds}{s}\int_{B_{1/2}}\Big(|\nabla u^s|^{{{\bar
p}_k(x)}-2}\nabla u^s-
|\nabla v_k|^{{{\bar p}_k(x)}-2}\nabla v_k\Big) \cdot \nabla(u^s-v_k)\ge\\
&\quad\quad C\Big(\int_{B_{1/2}\cap\{{{\bar p}_k}\geq 2\} }
|\nabla {\bar u}_k-\nabla v_k|^{{\bar p}_k(x)}
+\int_{B_{1/2}\cap\{{{\bar p}_k}<2\} } |\nabla {\bar u}_k-\nabla
v_k|^2\Big(|\nabla {\bar u}_k|+|\nabla v_k|\Big)^{{{\bar
p}_k(x)}-2}\Big).
\end{aligned}
\end{equation*}
Now \eqref{cotadif} implies
$$
\begin{aligned}
& \int_{\{{\bar p}_k\geq 2\}\cap B_{1/2}} |\nabla {\bar
u}_k-\nabla v_k|^{{\bar p}_k(x)}\, dx\leq {\tilde C}(\ep_k
+L_1r_k)\quad \mbox{ and }
\\
& \int_{\{{\bar p}_k< 2\}\cap B_{1/2}}\big(|\nabla {\bar
u}_k|+|\nabla v_k|\big)^{{\bar p}_k(x)-2} |\nabla {\bar
u}_k-\nabla v_k|^2\, dx\leq {\tilde C}(\ep_k +L_1r_k).
\end{aligned}
$$
{}From these inequalities we obtain, reasoning as in the proof of
Theorem 5.1 in \cite{FBMW},
\begin{equation*}
\int_{B_{1/2}}|\nabla {\bar u}_k-\nabla v_k|^{{\bar
p}_k(x)}\,dx\le C\max\{\ep_k +L_1r_k,(\ep_k
+L_1r_k)^{{p_{\min}}/2}\}
\end{equation*}
and thus, \eqref{wk} follows.

 On the other hand, by interior H\"older gradient estimates, there holds that, for a subsequence,
$v_k\to v_0$ and $\nabla v_k\to\nabla v_0$ uniformly on compact
subsets of $B_{1/2}$. Since $\|\nabla {\bar
p}_k\|_{L^{\infty}(B_1)}\leq L r_k$, there exists a constant $p_0$
such that (for a subsequence) ${\bar p}_k\to p_0$ uniformly in
$B_{1/2}$.

Finally, since $ \|\nabla {\bar u}_k\|_{L^{\infty}(B_{1/2})} \leq
L_2$ we have, for a subsequence, ${\bar u}_k\to u_0$ uniformly in
$B_{1/2}$.

Let $w_k={\bar u}_k-v_k$. Then, $w_k\to u_0-v_0$ uniformly on
compact subsets of $B_{1/2}$. By \eqref{wk} we have that $\|\nabla
w_k\|_{L^{{\bar p}_k(\cdot)}(B_{1/2})}\to 0$. Since $w_k\in
W_0^{1,{\bar p}_k(\cdot)}(B_{1/2})$, by Poincare's inequality
(Theorem \ref{poinc}) we get that $\|w_k\|_{L^{{\bar
p}_k(\cdot)}(B_{1/2})}\to ||u_0-v_0||_{L^{p_0}(B_{1/2})}=0$.
 Thus, $u_0=v_0$.

Now, using  that $v_k\to u_0$ locally in $C^1(B_{1/2})$ and ${\bar
f}_k\to 0$ uniformly in $B_{1/2}$, we deduce that $\Delta_{p_0}
u_0=\Delta_{p_0} v_0=0$ in $B_{1/2}$.

As ${\bar u}_k\to u_0$ uniformly in $B_{1/2}$ we get, by
\eqref{nodeg-uk}, that $\sup_{B_{1/4}} u_0\geq \frac{c}{4}$.
 But $u_0(0)=\lim {\bar u}_k(0)=0$ and ${u}_0\ge 0$. By the  strong maximum principle  we arrive at a contradiction and the result follows.
\end{proof}

\medskip

We devote the last part of the section to discuss the fulfillment of properties (3) and (4) in the definition of weak solution for
nonnegative local minimizers of \eqref{funct-J}.

We need

\begin{defi}
\label{mild-minim} Let $p, f$ and $\lambda$ be as in Definition \ref{def-loc-min} and let $u\in W^{1,p(\cdot)+\delta_0}(\Omega)$, for some $\delta_0>0$. For an open set $D\subset \Omega$ let
\begin{equation*}
 J_D^{p,\lambda,f}(v)=J_D(v)=\int_D\Big(\frac{|\nabla
 v|^{p(x)}}{p(x)}+\lambda(x)\chi_{\{v>0\}}+fv\Big)\ dx.
\end{equation*}
We say that
$u$ is a mild minimizer of $J$
in $\Omega$ if for every $B_r(x_0)\subset\subset\Omega$ and $v\in W^{1,p(\cdot)+\delta}(B_r(x_0))$ with
$v-u\in W_0^{1,p(\cdot)+\delta}(B_r(x_0))$, for some $0<\delta<\delta_0$,
\begin{equation*}
 J_{B_r(x_0)}(u)\le J_{B_r(x_0)}(v).
\end{equation*}
\end{defi}

We have the following results for mild minimizers

\begin{prop}
\label{blow-up-mild-minim} Let $p, f$ and $\lambda$ be as in Theorem \ref{existence-minimizers-AC}.
Assume moreover that $\lambda\in C(\Omega)$.  Let $u$ be a nonnegative Lipschitz mild minimizer of $J$
in $\Omega$. Let $x_k\in\Omega\cap\partial\{u>0\}$, $x_k\to x_0\in\Omega$, $\rho_k\to 0$ and ${u}_k(x)=\frac{u(x_k+\rho_k x)}{\rho_k}$.
 Assume that $u_k\to u_0$ uniformly on compact sets of $\R^N$. Then $u_0$ is a nonnegative Lipschitz mild minimizer of $J$ in $\R^N$,
with $p(x)\equiv p(x_0)$, $\lambda(x)\equiv\lambda(x_0)$ and $f\equiv 0$.
\end{prop}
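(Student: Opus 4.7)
The plan is to prove the proposition via a Gamma-convergence-type argument in three stages: rescale, establish convergence of energies (and hence strong convergence of gradients), and then use a translated competitor.

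\textbf{Stage 1 (Rescaling and elementary properties).} A change of variables $y = x_k + \rho_k x$ shows that, on any ball $B_R \subset \mathbb R^N$ with $B_{\rho_k R}(x_k) \subset \Omega$, the function $u_k$ is a mild minimizer of
\[
\tilde J_k(v;B_R) = \int_{B_R}\Big[\frac{|\nabla v|^{p_k(x)}}{p_k(x)} + \lambda_k(x)\chi_{\{v>0\}} + f_k(x)v\Big]\,dx,
\]
where $p_k(x) = p(x_k+\rho_k x)$, $\lambda_k(x) = \lambda(x_k+\rho_k x)$, $f_k(x) = \rho_k f(x_k+\rho_k x)$. Since $p$ is Lipschitz and $\lambda\in C(\Omega)$, $p_k \to p(x_0)$ and $\lambda_k \to \lambda(x_0)$ uniformly on compact subsets of $\mathbb R^N$; boundedness of $f$ together with $\rho_k \to 0$ gives $\|f_k\|_{L^\infty} \to 0$. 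Since $u$ is locally Lipschitz with a uniform constant by Corollary \ref{Lip}, so is $u_k$; uniform convergence therefore gives that $u_0$ is nonnegative and Lipschitz with the same constant, in particular $u_0\in W^{1,q}_{\rm loc}(\mathbb R^N)$ for every $q$.

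\textbf{Stage 2 (Convergence of the energies).} I would first establish the liminf inequality $\liminf_k \tilde J_k(u_k;B_R) \ge J_0(u_0;B_R)$, where $J_0$ denotes the functional with $p\equiv p(x_0)$, $\lambda\equiv\lambda(x_0)$ and $f\equiv 0$. The $f_k$-term is trivially negligible. For the $\lambda$-term, uniform convergence of $u_k\to u_0$ gives $\chi_{\{u_0>0\}}\le \liminf \chi_{\{u_k>0\}}$, and Fatou plus $\lambda_k\to\lambda(x_0)$ uniformly yield the bound. For the gradient term, $p_k\to p(x_0)$ uniformly and the uniform Lipschitz bound on $u_k$ reduce the issue to showing $\liminf \int |\nabla u_k|^{p(x_0)}/p(x_0) \ge \int|\nabla u_0|^{p(x_0)}/p(x_0)$, which follows from weak-$*$ convergence $\nabla u_k\rightharpoonup\nabla u_0$ in $L^\infty$ (hence weak in $L^{p(x_0)}$) and convexity. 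For the matching upper bound I would test minimality of $u_k$ against $v_k^\tau = u_0 + (u_k-u_0)\eta_\tau$, where $\eta_\tau\in C_c^\infty(B_R)$ equals $0$ on $B_{R-\tau}$ and $1$ near $\partial B_R$, with $|\nabla \eta_\tau|\le 2/\tau$. Then $v_k^\tau - u_k = (u_0-u_k)(1-\eta_\tau)$ is Lipschitz and vanishes on $\partial B_R$, so it is admissible in the mild-minimizer sense. On $B_{R-\tau}$ one has $v_k^\tau = u_0$ and the corresponding part of $\tilde J_k(v_k^\tau)$ converges to $J_0(u_0;B_{R-\tau})$ by dominated convergence; on the corridor $B_R\setminus B_{R-\tau}$, the uniform convergence $u_k\to u_0$ makes the term $(u_k-u_0)/\tau$ vanish for $k\to\infty$ at fixed $\tau$, so $|\nabla v_k^\tau|$ stays uniformly bounded and the corridor energy is $\le C|B_R\setminus B_{R-\tau}|$. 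Letting $k\to\infty$ and then $\tau\to 0$ gives $\limsup \tilde J_k(u_k;B_R)\le J_0(u_0;B_R)$. Combined with the liminf, I get equality and, from the separate liminf bounds for each term, convergence of each piece individually. In particular $\int|\nabla u_k|^{p_k(x)}/p_k(x)\to \int|\nabla u_0|^{p(x_0)}/p(x_0)$; together with weak convergence and uniform convexity of $L^{p(x_0)}$, this yields the strong convergence $\nabla u_k\to\nabla u_0$ in $L^{p(x_0)}(B_R)$, and hence also in $L^{p_k(\cdot)}(B_R)$ thanks to the uniform $L^\infty$ bound and uniform convergence $p_k\to p(x_0)$.

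\textbf{Stage 3 (Minimality of $u_0$).} Given an admissible competitor $w$ for $u_0$, with $\phi = w-u_0\in W_0^{1,p(x_0)+\delta}(B_R)$ and $0<\delta<\delta_0$, I take as competitor for $u_k$ the translated function $w_k = u_k+\phi$; since $\phi$ has zero trace in $W^{1,p(x_0)+\delta}$ and $p_k\to p(x_0)$ uniformly, $\phi\in W_0^{1,p_k(\cdot)+\delta/2}(B_R)$ for $k$ large, so $w_k$ is admissible. Mild minimality gives $\tilde J_k(u_k;B_R)\le \tilde J_k(w_k;B_R)$; I pass to the limit using the strong convergence $\nabla u_k\to\nabla u_0$ from Stage 2, which implies $\int|\nabla u_k+\nabla\phi|^{p_k(x)}/p_k(x)\to \int|\nabla w|^{p(x_0)}/p(x_0)$. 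The $f_k$-term is trivial, and the $\lambda$-term is treated by noting that $\{u_k+\phi>0\}\subset\{w> -\|u_k-u_0\|_\infty\}$, so its $\limsup$ is controlled by $\lambda(x_0)|\{w\ge 0\}|$. Combined with Stage 2 this gives $J_0(u_0;B_R)\le J_0(w;B_R)$.

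The main technical obstacle I foresee is that $|\{w=0\}|$ need not vanish, so the naive $\limsup$ of the characteristic-function term controls $|\{w\ge 0\}|$ rather than $|\{w>0\}|$. I would handle this by an approximation argument: since $|\nabla w|=0$ a.e.\ on $\{w=0\}$, a coarea/slicing perturbation $w^{(t)} = w + t\psi$ with $\psi\in C_c^\infty(B_R)$ of unit sign on $\{w=0\}$ gives, for almost every small $t$, $|\{w^{(t)}=0\}|=0$ and $J_0(w^{(t)})\to J_0(w)$ as $t\to 0$, which allows reducing to competitors with trivial level set and completes the proof.
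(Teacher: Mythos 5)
Your argument is correct in outline and reaches the same conclusion, but it takes a genuinely different route from the paper's, and one stage of it needs more care than your sketch supplies.

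The paper proves minimality directly in one shot: given a competitor $v$ with $v-u_0\in W_0^{1,p(x_0)+\delta}(B_r)$, it builds a single test function
\[
v_{h,k}=\begin{cases} v &\text{in }B_r,\\ u_0+\tfrac{|x|-r}{h}(u_k-u_0)&\text{in }B_{r+h}\setminus B_r,\end{cases}
\]
which equals $u_k$ on $\partial B_{r+h}$, tests mild minimality of $u_k$ on $B_{r+h}$, bounds the annulus energy by $C h r^{N-1}$, and passes $k\to\infty$ then $h\to0$. The crucial structural advantage is that the characteristic function $\chi_{\{v_{h,k}>0\}}$ restricted to $B_r$ is literally $\chi_{\{v>0\}}$: nothing in the passage to the limit involves comparing $\chi_{\{v_k\cdot\}}$ with $\chi_{\{v\cdot\}}$, only the coefficient $\lambda_k\to\lambda(x_0)$ changes. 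Thus the level-set problem you flag never appears, and there is no need for strong gradient convergence.

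Your route instead first establishes $\tilde J_k(u_k;B_R)\to J_0(u_0;B_R)$ (by the same gluing trick applied with $v=u_0$), extracts strong $L^{p_0}$ convergence of gradients from uniform convexity, and only then tests with the translated competitor $w_k=u_k+\phi$. This is where the extra difficulty arises: from $\{u_k+\phi>0\}\subset\{w>-\|u_k-u_0\|_\infty\}$ you get $\lambda(x_0)|\{w\ge0\}|$, not $\lambda(x_0)|\{w>0\}|$, in the limit. Your proposed fix by perturbing $w$ to $w^{(t)}=w+t\psi$ is in the right spirit, but as stated it has a gap: for $w^{(t)}$ to remain admissible, $\psi$ must have compact support in $B_R$, hence vanish near $\partial B_R$, so the slicing argument only gives $|\{w^{(t)}=0\}\cap\{\psi\neq0\}|=0$ for a.e.\ $t$ and leaves $\{w=0\}$ uncontrolled in a collar of $\partial B_R$. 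This is repairable --- one needs $\psi\le0$ on $\{w=0\}$ (the sign matters, so ``of unit sign'' is not enough), one obtains $J_0(u_0;B_R)\le J_0(w;B_R)+\lambda(x_0)|B_R\cap\{\psi=0\}|$ after $t\to0$, and one then lets the support of $\psi$ exhaust $B_R$ --- but this double limit is a genuine additional step your sketch does not spell out. There is also a mild point worth stating explicitly in Stage 3: since $\phi\in W_0^{1,p(x_0)+\delta}$ but $p_k(\cdot)$ is variable, you need $p_k\le p(x_0)+\delta/2$ on $B_R$ for large $k$ before $\int|\nabla u_k+\nabla\phi|^{p_k(x)}$ has a $k$-independent dominating function.

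In short: your strategy works and additionally yields strong convergence of $\nabla u_k$, but it pays for that with the level-set technicality and a more delicate approximation, both of which the paper's single gluing construction sidesteps. If you adopt the paper's competitor, Stage 2 and the perturbation of Stage 3 become unnecessary.
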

\begin{proof} Let
$B_r=B_r(\bar x_0)$ be any ball in $\R^N$ and  assume for simplicity that $\bar x_0=0$. Denote
${p}_k(x)=p(x_k+\rho_k x)$, $p_0=p(x_0)$, ${\lambda}_k(x)=\lambda(x_k+\rho_k x)$, $\lambda_0=\lambda(x_0)$,
${f}_k(x)=\rho_k f(x_k+\rho_k x)$ and
\begin{align*}
J_{r,k}(v)=\int_{B_r} \Big(\frac{|\nabla v|^{p_k(x)}}{p_k(x)}&+\lambda_k(x)\chi_{\{v>0\}}+f_k v\Big)\, dx,\\
J_{r,0}(v)=\int_{B_r}\Big(\frac{|\nabla  v|^{p_0}}{p_0}&+\lambda_0\chi_{\{v>0\}}\Big)\,dx.
\end{align*}
Let $v\in W^{1,p_0+\delta}(B_r)$ with $v-u_0\in W_0^{1,p_0+\delta}(B_r)$ for some $\delta>0$.
We want to show that
\begin{equation}\label{mild-minimiza}
J_{r,0}(u_0)\le J_{r,0}(v).
\end{equation}
For $h>0$ small, we define
$$
v_{h,k}=
\begin{cases}
v & \mbox{in }B_r,\\
u_0+\frac{|x|-r}{h}(u_k-u_0) & \mbox{in }B_{r+h} \setminus B_r.
\end{cases}
$$

Then, since $p_k\le p_0+\delta/2$ in ${B_{r+h}}$ for $k$ large, it follows that $v_{h,k}\in W^{1,p_k(\cdot)+\delta/2}(B_{r+h})$, $v_{h,k}-u_k\in W_0^{1,p_k(\cdot)+\delta/2}(B_{r+h})$, for $k$ large, and there
holds
\begin{align*}
&J_{r+h,k}(v_{h,k})= \int_{B_{r+h}} \Big(\frac{|\nabla v_{h,k}|^{p_k(x)}}{p_k(x)}+\lambda_k(x)\chi_{\{v_{h,k}>0\}}+f_k v_{h,k}\Big) =
\\
&\quad J_{r,0}(v)+\int_{B_{r+h}\setminus B_r} \Big(\frac{|\nabla v_{h,k}|^{p_k(x)}}{p_k(x)}+\lambda_k(x)\chi_{\{v_{h,k}>0\}}+f_k v_{h,k}\Big)+\\
&\quad\int_{B_r}\Big( \frac{|\nabla v|^{p_k(x)}}{p_k(x)}-\frac{|\nabla  v|^{p_0}}{p_0}+(\lambda_k(x)-\lambda_0)\chi_{\{v>0\}}+f_k v\Big)
\le \ J_{r,0}(v) + C_0hr^{N-1} + \\
&\quad\quad  C_1\int_{B_{r+h}\setminus B_r} \frac{|u_k-u_0|^{p_k(x)}}{h^{p_k(x)}}+
\int_{B_r}\Big( \frac{|\nabla v|^{p_k(x)}}{p_k(x)}-\frac{|\nabla  v|^{p_0}}{p_0}+(\lambda_k(x)-\lambda_0)\chi_{\{v>0\}}+f_k v\Big).
\end{align*}

Therefore,
\begin{equation}\label{mild-limsupvhk}
\limsup_{k\to\infty}J_{r+h,k}(v_{h,k})\le J_{r,0}(v)+ C_0hr^{N-1}.
\end{equation}

On the other hand,
$$
\lambda_0\chi_{\{u_0>0\}}\le\liminf_{k\to\infty} \lambda_k(x)\chi_{\{u_{k}>0\}},
$$
which implies
\begin{equation}\label{mild-primterm}
\int_{B_r} \lambda_0\chi_{\{u_0>0\}}\,dx \le
\liminf_{k\to\infty} \int_{B_r}\lambda_k(x)\chi_{\{u_{k}>0\}}\, dx.
\end{equation}

In addition, since
$\nabla u_k \rightharpoonup \nabla u_0$ weakly  in
$L^{p_0}({B_r})$,
arguing in a similar way as in Theorem \ref{existence-minimizers-AC}, we get
\begin{equation}\label{mild-segterm}
 \int_{B_r} \frac{|\nabla u_0|^{p_0}}{p_0}\, dx \le \liminf_{k\to\infty}
\int_{B_r}\frac{|\nabla u_k|^{p_0}}{p_0}\, dx=\liminf_{k\to\infty}
\int_{B_r}\frac{|\nabla u_k|^{p_k(x)}}{p_k(x)}\, dx.
\end{equation}

Now, using \eqref{mild-primterm} and \eqref{mild-segterm}, and the fact that
$u_k$ are  nonnegative Lipschitz mild minimizers of $J$ with $p(x)=p_k(x)$, $\lambda(x)=\lambda_k(x)$ and $f(x)=f_k(x)$  we obtain
$$
J_{r,0}(u_0)\le \liminf_{k\to\infty} J_{r,k}(u_k)\le \liminf_{k\to\infty} J_{r+h,k}(u_k)+ C_2hr^{N-1}
\le\liminf_{k\to\infty} J_{r+h,k}(v_{h,k})+ C_2hr^{N-1},
$$
which in combination with \eqref{mild-limsupvhk} gives
$$
J_{r,0}(u_0)\le J_{r,0}(v)+ C_3hr^{N-1}.
$$
Therefore, letting $h\to 0$ we obtain \eqref{mild-minimiza}.
\end{proof}

We will need

\begin{prop}
\label{identific-alpha} Let $1<p_0$ and $\lambda_0, \alpha$ be positive  constants. Let $u$  be a  Lipschitz mild minimizer of $J$
in $\R^N$,
with $p(x)\equiv p_0$, $\lambda(x)\equiv\lambda_0$ and $f\equiv 0$. Assume that $u=\alpha x_1^+$ in $B_{r_0}$, for some $r_0>0$.  Then, $\alpha=\Big(\frac{p_0}{p_0-1}\,\lambda_0\Big)^{1/p_0}$.
\end{prop}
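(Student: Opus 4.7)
The plan is to exploit the mild minimality of $u$ against a one-parameter family of perturbations that shift the flat free boundary $\{x_1=0\}$, and read off $\alpha$ from the resulting Euler--Lagrange identity at first order.

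Fix $0<r<r_0$ and $\phi\in C_0^\infty(B_r)$. For $\epsilon\in\R$ with $|\epsilon|$ small, define
\begin{equation*}
v_\epsilon(x)=\alpha\bigl(x_1+\epsilon\phi(x)\bigr)^+\quad\text{in }B_r,\qquad v_\epsilon=u\quad\text{outside }B_r.
\end{equation*}
Since $\phi$ is compactly supported in $B_r$ and $u=\alpha x_1^+$ on $B_{r_0}\supset B_r$, $v_\epsilon$ is Lipschitz on $\R^N$ and $v_\epsilon-u$ is Lipschitz with compact support in $B_r$; hence $v_\epsilon-u\in W_0^{1,p_0+\delta}(B_r)$ for every $\delta>0$, so the mild minimality of $u$ (with $p(x)\equiv p_0$) yields $J_{B_r}(v_\epsilon)\ge J_{B_r}(u)$.

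The next step is to expand $J_{B_r}(v_\epsilon)-J_{B_r}(u)$ to order $\epsilon$. On $\{v_\epsilon>0\}$ we have $\nabla v_\epsilon=\alpha(e_1+\epsilon\nabla\phi)$, so a Taylor expansion gives $|e_1+\epsilon\nabla\phi|^{p_0}=1+p_0\epsilon\,\partial_{x_1}\phi+O(\epsilon^2)$ uniformly on $B_r$. Since the symmetric difference $\{v_\epsilon>0\}\triangle\{u>0\}$ has measure $O(\epsilon)$ and $\partial_{x_1}\phi$ is bounded, one obtains
\begin{equation*}
\int_{\{v_\epsilon>0\}\cap B_r}\frac{|\nabla v_\epsilon|^{p_0}}{p_0}\,dx=\frac{\alpha^{p_0}}{p_0}\bigl|\{v_\epsilon>0\}\cap B_r\bigr|+\alpha^{p_0}\epsilon\int_{\{u>0\}\cap B_r}\partial_{x_1}\phi\,dx+O(\epsilon^2).
\end{equation*}
Integration by parts gives $\int_{\{x_1>0\}\cap B_r}\partial_{x_1}\phi\,dx=-\int_{\{x_1=0\}\cap B_r}\phi\,d\mathcal{H}^{N-1}$. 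Meanwhile, a Fubini-type computation in the thin layer near $\{x_1=0\}$ shows
\begin{equation*}
\bigl|\{v_\epsilon>0\}\cap B_r\bigr|-\bigl|\{u>0\}\cap B_r\bigr|=\epsilon\int_{\{x_1=0\}\cap B_r}\phi\,d\mathcal{H}^{N-1}+O(\epsilon^2).
\end{equation*}
Combining these and writing $I(\phi):=\int_{\{x_1=0\}\cap B_r}\phi\,d\mathcal{H}^{N-1}$ gives
\begin{equation*}
J_{B_r}(v_\epsilon)-J_{B_r}(u)=\epsilon\,I(\phi)\left(\lambda_0-\frac{p_0-1}{p_0}\alpha^{p_0}\right)+O(\epsilon^2).
\end{equation*}

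Finally, since the left-hand side is $\ge 0$ for both signs of $\epsilon$ and for arbitrary $\phi\in C_0^\infty(B_r)$ (replace $\phi$ by $-\phi$ if necessary), the coefficient of $\epsilon$ must vanish, which yields $\lambda_0=\tfrac{p_0-1}{p_0}\alpha^{p_0}$, i.e., $\alpha=\bigl(\tfrac{p_0}{p_0-1}\lambda_0\bigr)^{1/p_0}$. The only technical obstacle is the careful justification of the $O(\epsilon^2)$ remainders in both the volume and the gradient expansions; this is standard once one observes that $\{v_\epsilon>0\}\triangle\{u>0\}$ is a strip of thickness $|\epsilon|\|\phi\|_\infty$ around $\{x_1=0\}$ and that $\phi$ is smooth.
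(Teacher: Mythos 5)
Your proposal is correct and takes essentially the same route as the paper: both are first-variation arguments that perturb the flat free boundary of $\alpha x_1^+$ in the normal direction and read off the identity $\frac{p_0-1}{p_0}\alpha^{p_0}=\lambda_0$ from the vanishing of the first-order term. The paper builds the competitor by a domain variation $u_\ep=u\circ\tau_\ep^{-1}$ with $\tau_\ep(x)=x+\ep\phi(|x|)e_1$ and then cites Lemma 7.3 of \cite{MW1} for the general identity $G'(\alpha)\alpha-G(\alpha)=\lambda$, whereas you construct the competitor directly as $\alpha(x_1+\ep\phi)^+$ and carry out the expansion by hand; to first order these coincide, so your version is simply a self-contained unwrapping of the same computation.
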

\begin{proof} Let $\ep>0$ small,  let $\tau_{\ep}(x)=x+\ep \phi\displaystyle({|x|}) e_1$ with  $\phi\in C_0^{\infty}(-r_0,r_0)$,
and let $u_{\ep}(x)=u({\tau_{\ep}}^{-1}(x))$.

Then, $u_{\ep}\in W^{1,p_0+\delta}(B_{r_0})$ with $u_{\ep}-u\in W_0^{1,p_0+\delta}(B_{r_0})$, for some $\delta>0$, which implies that
$$
0\leq J_{{r_0},0}(u_{\ep})-J_{{r_0},0}(u),
$$
for
\begin{align*}
J_{r_0,0}(v)=\int_{B_{r_0}}\Big(\frac{|\nabla  v|^{p_0}}{p_0}&+\lambda_0\chi_{\{v>0\}}\Big)\,dx.
\end{align*}

We now proceed as in Lemma 7.3 in \cite{MW1}. In fact, there it is proved an analogous result with $J_{r_0,0}$ replaced by
$$
\mathcal{J}(v)=\int_{B_{r_0}} \Big(G(|\nabla v|) + \lambda \chi_{\{v>0\}}\Big)\,dx,
$$
for a general $G$ and a positive constant $\lambda$, and it is shown that
\begin{equation}\label{general-MW1}
G'(\alpha)\alpha-G(\alpha)=\lambda.
\end{equation}
Since in our case we have $\mathcal{J}$ with $G(t)=\frac{t^{p_0}}{p_0}$ and $\lambda=\lambda_0$, \cite{MW1} applies and thus \eqref{general-MW1} yields
\begin{equation*}
{\alpha}^{p_0}-\frac{\alpha^{p_0}}{p_0}=\lambda_0,
\end{equation*}
which gives the desired result.
\end{proof}

Next we prove

\begin{theo}
\label{limsup-grad} Let $p, f, \lambda$ and $u$ be as in Lemma \ref{lemm-min-ig}. Assume moreover that $\lambda\in C(\Omega)$.
Let $x_0\in\Omega\cap\partial\{u>0\}$. Then,
$$
\limsup_{\stackrel{x\to x_0}{u(x)>0}}\,|\nabla u(x)|=
\lambda^*(x_0),
$$
where $\lambda^*(x)=\Big(\frac{p(x)}{p(x)-1}\,\lambda(x)\Big)^{1/p(x)}$.
\end{theo}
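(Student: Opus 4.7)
The plan is to split the equality into the two inequalities $\limsup\le\lambda^*(x_0)$ and $\limsup\ge\lambda^*(x_0)$ and handle each by a blow-up argument. By Corollary~\ref{Lip} the function $u$ is locally Lipschitz in $\Omega$, so on every $\Omega'\subset\subset\Omega$ it lies in $W^{1,\infty}(\Omega')\subset W^{1,p(\cdot)+\delta_0}(\Omega')$ for every $\delta_0>0$; extending any admissible test function $v\in W^{1,p(\cdot)+\delta}(B_r(x_0))$ to equal $u$ outside $B_r(x_0)$ keeps it in $W^{1,p(\cdot)}$, so local minimality of $u$ gives that $u$ is a mild minimizer in the sense of Definition~\ref{mild-minim}. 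This makes Proposition~\ref{blow-up-mild-minim} available for any blow-up centered at a point of $\Omega\cap\partial\{u>0\}$.

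For the upper bound I argue by contradiction. Suppose $x_k\to x_0$ with $u(x_k)>0$ and $|\nabla u(x_k)|\to L>\lambda^*(x_0)$. Since $u(x_0)=0$, the distance $d_k=\mathrm{dist}(x_k,\{u=0\})$ tends to $0$; pick $y_k\in\{u=0\}$ with $|x_k-y_k|=d_k$, so $y_k\to x_0$ and $y_k\in\Omega\cap\partial\{u>0\}$. Set $u_k(x)=u(y_k+d_kx)/d_k$ and $e_k=(x_k-y_k)/d_k\in S^{N-1}$. The equi-Lipschitz bound from Corollary~\ref{Lip} and Arzel\`a--Ascoli yield, along a subsequence, $u_k\to u_0$ locally uniformly and $e_k\to e_0$; since $B_1(e_k)\subset\{u_k>0\}$, also $B_1(e_0)\subset\{u_0>0\}$. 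Proposition~\ref{blow-up-mild-minim} gives that $u_0$ is a global Lipschitz mild minimizer on $\R^N$ with constant parameters $p_0:=p(x_0)$, $\lambda_0:=\lambda(x_0)$, $f\equiv 0$, and $0\in\partial\{u_0>0\}$. Interior $C^{1,\alpha}$ estimates for the $p_0$-Laplacian (Theorem~1.1 in \cite{Fan}) on $\{u_k>0\}$ then propagate $\nabla u_k(e_k)\to \nabla u_0(e_0)$, so $|\nabla u_0(e_0)|=L$.

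The main obstacle is the classification of $u_0$: one needs $u_0(x)=\lambda^*(x_0)\langle x,\nu\rangle^+$ for some unit $\nu$. I would invoke the free boundary regularity theory from \cite{LW5} for weak solutions of the constant-coefficient problem $P(0,p_0,\lambda_0^*)$ combined with the Lipschitz, nondegeneracy, and density estimates carried by $u_0$ (Corollary~\ref{Lip} and Theorems~\ref{nodegfinal}, \ref{densprop}); this forces the blow-up of $u_0$ at $0$ to be a half-plane solution, while Proposition~\ref{identific-alpha} identifies its slope as $\lambda^*(x_0)$. A uniqueness/homogeneity argument for global one-homogeneous Lipschitz mild minimizers then pins down $u_0(x)=\lambda^*(x_0)\langle x,\nu\rangle^+$ on all of $\R^N$, so $|\nabla u_0|\equiv \lambda^*(x_0)$ on $\{u_0>0\}$, contradicting $L>\lambda^*(x_0)$.

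For $\limsup\ge\lambda^*(x_0)$, I pick $y_k\in\Omega\cap\partial\{u>0\}$ with $y_k\to x_0$ and any $\rho_k\downarrow 0$, set $\tilde u_k(x)=u(y_k+\rho_kx)/\rho_k$, and repeat the compactness argument to extract a global Lipschitz mild minimizer $\tilde u_0$ with the same constant parameters and $0\in\partial\{\tilde u_0>0\}$; nontriviality of $\tilde u_0$ follows from the uniform nondegeneracy of $u$ at free boundary points (Theorem~\ref{nodegfinal}). The same classification yields $\tilde u_0(x)=\lambda^*(x_0)\langle x,\nu\rangle^+$. Choosing any $z^*$ with $\langle z^*,\nu\rangle>0$, local $C^{1,\alpha}$ convergence of $\tilde u_k\to\tilde u_0$ inside $\{\tilde u_0>0\}$ produces points $z_k=y_k+\rho_kz^*\to x_0$ with $u(z_k)>0$ and $|\nabla u(z_k)|\to|\nabla \tilde u_0(z^*)|=\lambda^*(x_0)$, which gives the desired bound. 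Throughout, the delicate step is the classification of the blow-up as a half-plane solution, which relies on the free boundary regularity from \cite{LW5} and Proposition~\ref{identific-alpha}.
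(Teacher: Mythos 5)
Your strategy of blowing up at nearest free-boundary points, using the mild-minimizer framework (Definition~\ref{mild-minim} and Proposition~\ref{blow-up-mild-minim}), and identifying the slope of a half-plane via Proposition~\ref{identific-alpha} is exactly the right toolkit and matches the paper's. However, your organization into two separate inequalities introduces gaps that the paper's single-pass argument avoids.

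The paper sets $\alpha:=\limsup_{x\to x_0,\,u(x)>0}|\nabla u(x)|$, takes $z_k$ realizing this limsup, blows up $u_{d_k}(x)=u(y_k+d_k x)/d_k$ at the nearest free-boundary points $y_k$, and then establishes the \emph{quantitative} bound $|\nabla u_0|\le\alpha$ on all of $\R^N$ (from the definition of the limsup together with interior gradient estimates on shrinking balls around $x_0$). It then performs a \emph{second} blow-up $u_{00}$ of $u_0$ at the origin and invokes the structure argument from Theorem~5.1 of~\cite{LW4} (together with Lemma~\ref{lemm-min-ig}, Theorem~\ref{densprop}, and the bound $|\nabla u_0|\le\alpha$) to obtain $u_{00}(x)=\alpha x_1^+$ with slope \emph{exactly} $\alpha$. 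Finally Propositions~\ref{blow-up-mild-minim} and~\ref{identific-alpha} applied to $u_{00}$ give $\alpha=\lambda^*(x_0)$, proving the full equality in one stroke.

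Your argument omits the bound $|\nabla u_0|\le L$ on the blow-up limit, which is the engine that makes the slope of the second blow-up equal to the limsup. More seriously, for the upper bound you claim that a ``uniqueness/homogeneity argument for global one-homogeneous Lipschitz mild minimizers then pins down $u_0(x)=\lambda^*(x_0)\langle x,\nu\rangle^+$ on all of $\R^N$.'' This is not established: the paper never classifies the first blow-up $u_0$ globally as a half-plane --- only the second blow-up $u_{00}$ is shown to be one --- and no Liouville-type theorem promoting the blow-up of $u_0$ at one point to a global identity is available here. Without that step your contradiction ``$|\nabla u_0(e_0)|=L>\lambda^*(x_0)$ but $|\nabla u_0|\equiv\lambda^*(x_0)$'' does not close. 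Similarly, for the lower bound you blow up at arbitrary free-boundary points $y_k\to x_0$ and assert ``The same classification yields $\tilde u_0=\lambda^*(x_0)\langle x,\nu\rangle^+$,'' but there is no reason the \emph{first} blow-up along an arbitrary sequence $\rho_k\downarrow 0$ should be a half-plane; you would again need a second blow-up together with the structure theorem from~\cite{LW4}, which is different from (and not replaceable by) the $C^{1,\alpha}$ free-boundary regularity of~\cite{LW5}. Also note that Theorems~\ref{limsup-bola} and~\ref{asymptotic-devel}, which could independently supply the lower bound, are proved in the paper \emph{after} this theorem and use its conclusion, so you cannot appeal to them without circularity.
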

\begin{proof}
Let
$$
\alpha:=\limsup_{\stackrel{x\to x_0}{u(x)>0}} |\nabla u(x)|.
$$
Since $u\in Lip_{\rm{loc}}(\Omega)$, $0\le\alpha<\infty$.
By the definition of $\alpha$ there exists a sequence
$z_k\rightarrow x_0$ such that
$$
u(z_k)>0,\quad \quad |\nabla u(z_k)|\rightarrow \alpha.
$$
Let $y_k$ be the nearest point from $z_k$ to $\Omega \cap
\partial\{u>0\}$ and let $d_k = |z_k-y_k|$.

Consider the blow up sequence $u_{d_k}$ with respect to
$B_{d_k}(y_k)$.  That is, $u_{d_k}(x)=\frac 1{d_k}u(y_k+d_k x)$.
Since $u$ is locally Lipschitz, and $u_{d_k}(0)=0$ for every $k$, there
exists $u_0$, with $u_0(0)=0$, such  that (for a subsequence)
$u_{d_k}\to u_0$ uniformly on compact sets of $\R^N$.
Moreover, using Lemma \ref{lemm-min-ig} and interior H\"older estimates we deduce that
$\nabla u_{d_k}\to \nabla u_0$ uniformly on compact subsets of $\{u_0>0\}$.

We claim that $|\nabla u_0|\leq \alpha$ in $\R^N$. In fact,
let $R>1$ and $\delta>0$. Then, there exists $\tau_0>0$ such that
$|\nabla u(x)|\leq \alpha+\delta$ for any $x\in B_{\tau_0
R}(x_0)$. For $|z_k-x_0|<\tau_0 R/2$ and $d_k<\tau_0/2$ we have
$B_{d_k R}(z_k)\subset B_{\tau_0 R}(x_0)$ and therefore, $|\nabla
u_{d_k}(x)|\leq \alpha +\delta$ in $B_{R-1}$ for $k$ large. Passing to
the limit, we obtain $|\nabla u_0|\leq \alpha+\delta$ in $B_{R-1}$,
and since $\delta$ and $R$ were arbitrary, the claim holds.

Now, if $\alpha=0$, since $u_0(0)=0$, it follows that $u_0\equiv 0$. This contradicts Theorem \ref{nodegfinal} and then, $\alpha>0$.

Next, define for $\gamma>0$, $(u_0)_{\gamma}(x)=\frac{1}{\gamma}
u_0(\gamma x)$. There exist a sequence $\gamma_n\to 0$ and
$u_{00}\in Lip(\R^N)$ such that $(u_0)_{\gamma_n}\to u_{00}$
uniformly on compact sets of $\R^N$.

Using Lemma \ref{lemm-min-ig} and Theorem \ref{densprop} and proceeding as in the proof of Theorem 5.1 in \cite{LW4} we
obtain that $u_{00}(x)=\alpha x_1^+$.

Now, since $u$ is a nonnegative local  minimizer of functional $J$ in $\Omega$, then $u$ is locally Lipschitz and  it is a nonnegative  mild minimizer of $J$ in $\Omega$. Thus, applying Proposition \ref{blow-up-mild-minim} to $u$ and to the blow up sequence $u_{d_k}$, we get that
$u_0$ is a nonnegative Lipschitz mild minimizer of $J$ in $\R^N$,
with $p(x)\equiv p(x_0)$, $\lambda(x)\equiv\lambda(x_0)$ and $f\equiv 0$.

Then, applying again Proposition \ref{blow-up-mild-minim}, now to $u_0$ and to the blow up sequence $(u_0)_{\gamma_n}$,
we also get that
$u_{00}(x)=\alpha x_1^+$ is a nonnegative Lipschitz mild minimizer of $J$ in $\R^N$,
with $p(x)\equiv p(x_0)$, $\lambda(x)\equiv\lambda(x_0)$ and $f\equiv 0$.

Thus, using Proposition \ref{identific-alpha}, we get that
$\alpha=\lambda^*(x_0)$.
\end{proof}

Our next result is

\begin{theo}\label{limsup-bola} Let $p, f, \lambda$ and $u$ be as in Theorem \ref{limsup-grad}.
 Let $x_0\in  \Omega\cap\partial\{u>0\}$. Assume there is  a ball $B$ contained in $ \{u=0\} $ touching  $ x_0$, then
\begin{equation}\label{limsup224}
\limsup_{\stackrel{x\to x_0}{u(x)>0}} \frac{u(x)}{\mbox{dist}(x,B)}=  \lambda^*(x_0),\end{equation} where
$\lambda^*(x)=\Big(\frac{p(x)}{p(x)-1}\,\lambda(x)\Big)^{1/p(x)}$.
\end{theo}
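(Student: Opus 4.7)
The proof splits into the upper bound $\limsup\le\lambda^*(x_0)$ and the matching lower bound.

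\emph{Upper bound.} Given $x$ close to $x_0$ with $u(x)>0$, let $y\in\partial B$ be nearest to $x$, set $\nu:=(x-y)/|x-y|$, and
\[
s^*:=\sup\{s\in[0,|x-y|]:u(y+s\nu)=0\}<|x-y|.
\]
By continuity $u(y+s^*\nu)=0$ and $u>0$ on $\{y+s\nu:s\in(s^*,|x-y|]\}$, so by the local Lipschitz regularity of $u$ (Corollary \ref{Lip}) and the fundamental theorem of calculus along this last positivity sub-segment,
\[
u(x)=\int_{s^*}^{|x-y|}\nabla u(y+s\nu)\cdot\nu\,ds\le\bigl(|x-y|-s^*\bigr)\sup_{s\in(s^*,|x-y|]}|\nabla u(y+s\nu)|.
\]
Dividing by $\mathrm{dist}(x,B)=|x-y|$ and letting $x\to x_0$ (whence $y\to x_0$ and every point of the segment also tends to $x_0$), Theorem \ref{limsup-grad} delivers $\limsup u(x)/\mathrm{dist}(x,B)\le\lambda^*(x_0)$.

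\emph{Lower bound---blow-up at $x_0$.} After a rotation, let $e_1$ be the outward unit normal to $B$ at $x_0$, so that $\mathrm{dist}(x_0+\sigma e_1,B)=\sigma$ for all small $\sigma>0$. Set $u_\rho(x):=u(x_0+\rho x)/\rho$. Corollary \ref{Lip} provides uniform Lipschitz bounds on compacts, so along some $\rho_k\to 0$ we have $u_{\rho_k}\to u_*$ uniformly on compact subsets of $\R^N$. By Proposition \ref{blow-up-mild-minim}, $u_*$ is a nonnegative Lipschitz mild minimizer of $J$ on $\R^N$ with constant exponent $p(x_0)$, constant coefficient $\lambda(x_0)$, and vanishing source; since $B\subset\{u=0\}$ and the rescaled balls $(B-x_0)/\rho_k$ exhaust the half-space $\{x_1\le 0\}$, $u_*\equiv 0$ there, while the nondegeneracy of Theorem \ref{nodegfinal} forces $0\in\partial\{u_*>0\}$.

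\emph{Further blow-up, diagonal extraction and main obstacle.} Applying the double-blow-up argument from the proof of Theorem \ref{limsup-grad} to $u_*$ at $0$, a second blow-up $(u_*)_{\gamma_n}(x):=u_*(\gamma_n x)/\gamma_n$ along some $\gamma_n\to 0$ converges to a Lipschitz mild minimizer $u_{**}$ on $\R^N$ (Proposition \ref{blow-up-mild-minim}); by Theorems \ref{nodegfinal} and \ref{densprop} together with the argument from Theorem 5.1 in \cite{LW4} invoked in Theorem \ref{limsup-grad}, $u_{**}$ has the form $\beta x_1^+$, and Proposition \ref{identific-alpha} pins down $\beta=\lambda^*(x_0)$. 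In particular $u_*(\gamma_n e_1)/\gamma_n\to\lambda^*(x_0)$. A diagonal extraction from $u_{\rho_k}\to u_*$ then produces indices $k_n$ with $|u_{\rho_{k_n}}(\gamma_n e_1)-u_*(\gamma_n e_1)|<\gamma_n/n$; setting $\sigma_n:=\rho_{k_n}\gamma_n\to 0$ and $\tilde x_n:=x_0+\sigma_n e_1$,
\[
\frac{u(\tilde x_n)}{\mathrm{dist}(\tilde x_n,B)}=\frac{u(\tilde x_n)}{\sigma_n}=\frac{u_{\rho_{k_n}}(\gamma_n e_1)}{\gamma_n}\ \longrightarrow\ \lambda^*(x_0),
\]
which yields the lower bound. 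The main technical obstacle is the identification of $u_{**}$ as exactly $\lambda^*(x_0)\,x_1^+$, which demands transferring the full blow-up machinery of Theorem \ref{limsup-grad} to the mild minimizer $u_*$ (whose zero set already contains a half-space) and using Proposition \ref{identific-alpha} to fix the coefficient; the remainder is a routine combination of the fundamental theorem of calculus, Lipschitz control, and the diagonal extraction.
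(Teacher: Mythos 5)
Your proof takes a genuinely different route from the paper's. The paper proves \eqref{limsup224} in one pass: pick $y_k\to x_0$ realizing $\ell=\limsup u/\mathrm{dist}(\cdot,B)$, blow up with scale $d_k=\mathrm{dist}(y_k,B)$ centered at the nearest points $x_k\in\partial B$, classify the limit $u_0=\ell\langle x,e\rangle^+$ (using the equation, Theorem \ref{nodegfinal}, and Theorem 5.2 in \cite{LW4}), and then apply Propositions \ref{blow-up-mild-minim} and \ref{identific-alpha} to read off $\ell=\lambda^*(x_0)$. You instead split into an upper bound and a lower bound, and blow up at $x_0$ itself rather than at $\partial B$. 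Your upper bound (fundamental theorem of calculus along the last positivity sub-segment, fed through Theorem \ref{limsup-grad}) is elementary, correct, and does not appear in the paper; it is a pleasant shortcut. Your lower-bound blow-up at $x_0$ vanishing on a half-space is structurally the same as the paper's Theorem \ref{asymptotic-devel}, and the diagonal extraction producing $\tilde x_n=x_0+\sigma_n e_1$ with $u(\tilde x_n)/\sigma_n\to\lambda^*(x_0)$ is clean.

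There is, however, a genuine gap exactly where you flagged the "main technical obstacle." To get $u_{**}=\beta x_1^+$ you invoke "the argument from Theorem 5.1 in \cite{LW4} invoked in Theorem \ref{limsup-grad}." That argument is tuned to a different setup: there, $\alpha$ is defined as the gradient limsup, and the first blow-up $u_0$ is taken at the nearest free-boundary points $y_k$ to a sequence $z_k$ with $|\nabla u(z_k)|\to\alpha$, so $u_0$ satisfies $|\nabla u_0|\le\alpha$ with equality attained at an interior point $e=\lim(z_k-y_k)/d_k$. That attainment of the supremum is what drives the classification. In your setting you only have the one-sided bound $|\nabla u_*|\le\lambda^*(x_0)$ (inherited from Theorem \ref{limsup-grad}); there is no reason the supremum is attained by $u_*$, so the transfer is not automatic. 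The right tool here is Lemma \ref{development11}: $u_*$ is Lipschitz, nonnegative, $p_0$-harmonic on $\{u_*>0\}$, and vanishes on $\{x_1\le 0\}$, so $u_*(x)=\alpha x_1^++o(|x|)$ with $\alpha\ge 0$; nondegeneracy (passed through the blow-up as in Theorem \ref{asymptotic-devel}) gives $\alpha>0$; a second blow-up then gives $u_{**}=\alpha x_1^+$ exactly, and Propositions \ref{blow-up-mild-minim} and \ref{identific-alpha} give $\alpha=\lambda^*(x_0)$. With this replacement the development $u_*(\gamma e_1)/\gamma\to\alpha$ is immediate and your diagonal extraction closes the argument; as written, the classification step does not follow from the cited machinery. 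Also note that Theorems \ref{nodegfinal} and \ref{densprop} are stated for nonnegative local minimizers of \eqref{funct-J}, not for the mild minimizer $u_*$; the nondegeneracy of $u_*$ must be obtained by rescaling the estimate for $u$ (as the paper does in Theorem \ref{asymptotic-devel}), not by applying those theorems to $u_*$ directly.
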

\begin{proof}
Let $\ell$ be the finite limit on the left hand side of \eqref{limsup224} and let $y_k \to x_0$
with $u(y_k)>0$ be such that
$$\frac{u(y_k)}{d_k}\to \ell, \quad d_k=\mbox{dist}(y_k,B).$$
Consider the blow up sequence $u_k$ with respect to
$B_{d_k}(x_k)$, where $x_k\in\partial B$ are points with
$|x_k-y_k|=d_k$, that is, $u_k(x)=\frac{u(x_k+d_k x)}{d_k}$.
Choose a subsequence with blow up limit
$u_0$, such that there exists
$$e:=\lim_{k\to\infty} \frac{y_k-x_k}{d_k}.$$

Using Lemma \ref{lemm-min-ig} and Theorem \ref{nodegfinal} and proceeding as in the proof of Theorem 5.2 in \cite{LW4} we have that $u_0(x)=\ell\langle x, e\rangle^+$.
Thus, applying Propositions \ref{blow-up-mild-minim} and \ref{identific-alpha}, we get that
$\ell=\lambda^*(x_0)$.
\end{proof}

The last result in this section is

\begin{theo}
\label{asymptotic-devel} Let $p, f, \lambda$ and $u$ be as in Theorem \ref{limsup-grad}.
 Let $x_0\in \Omega\cap\partial\{u>0\}$ be such that $\fb$ has at
$x_0$ an  inward unit normal $\nu$ in the measure theoretic sense. Then,
$$
u(x)=\lambda^*(x_0)\langle x-x_0, \nu\rangle^+  + o(|x-x_0|),
$$
where $\lambda^*(x)=\Big(\frac{p(x)}{p(x)-1}\,\lambda(x)\Big)^{1/p(x)}$.
\end{theo}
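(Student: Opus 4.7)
Without loss of generality take $x_0=0$ and $\nu=e_N$. The scheme is to classify every subsequential blow-up limit of the family $u_r(y):=u(ry)/r$, $r\to 0^+$, as the half-plane solution $\lambda^*(0)\,y_N^+$; the uniform blow-up compactness then promotes this to the claimed asymptotic development.

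First I would extract a blow-up limit. By Corollary~\ref{Lip} the family $\{u_r\}$ is equi-Lipschitz on compact sets of $\R^N$ and $u_r(0)=0$, so for any sequence $r_k\to 0^+$ Arzel\`a--Ascoli provides a further subsequence with $u_{r_k}\to u_0$ uniformly on compacts, for some nonnegative Lipschitz $u_0$ with $u_0(0)=0$. Since every nonnegative local minimizer is in particular a mild minimizer (cf.\ Definition~\ref{mild-minim}), Proposition~\ref{blow-up-mild-minim} yields that $u_0$ is a nonnegative Lipschitz mild minimizer of $J$ on $\R^N$ with $p\equiv p(0)$, $\lambda\equiv\lambda(0)$ and $f\equiv 0$. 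Rewriting the measure-theoretic inward-normal condition in the rescaled variables gives $\chi_{\{u_{r_k}>0\}}\to\chi_{\{y_N>0\}}$ in $L^1_{\rm loc}(\R^N)$; together with uniform convergence $u_{r_k}\to u_0$ this forces $u_0\equiv 0$ on $\{y_N\le 0\}$, because otherwise $u_{r_k}$ would stay bounded away from $0$ on a ball lying inside $\{y_N<0\}$. Theorem~\ref{nodegfinal} applied to $u$ gives $\sup_{B_1}u_0\ge c>0$, so $u_0\not\equiv 0$.

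The central step is to identify $u_0(y)\equiv\lambda^*(0)\,y_N^+$. I would proceed by a further blow-up of $u_0$ at the origin: $w_\gamma(y):=u_0(\gamma y)/\gamma$ with $\gamma\to 0^+$. Each $w_\gamma$ is again a nonnegative Lipschitz mild minimizer with the same constant coefficients, identically zero on $\{y_N\le 0\}$. Exactly as in the proof of Theorem~\ref{limsup-grad} --- combining Lemma~\ref{lemm-min-ig}, the density estimate of Theorem~\ref{densprop}, and the argument of Theorem~5.1 in \cite{LW4} --- any uniform subsequential limit of $\{w_\gamma\}$ has the form $\alpha\,y_1^+$ up to a rotation; the mandatory vanishing on $\{y_N\le 0\}$ pins the rotation so that the limit is $\alpha\,y_N^+$, and Proposition~\ref{identific-alpha} identifies $\alpha=\lambda^*(0)$. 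Uniqueness of the subsequential limit then upgrades this to $w_\gamma\to\lambda^*(0)\,y_N^+$ uniformly on compacts as $\gamma\to 0^+$.

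The final piece is to bootstrap this infinitesimal identification at the origin to the global equality $u_0\equiv\lambda^*(0)\,y_N^+$ on all of $\R^N$. For this I would use the scale invariance of the mild minimality: each $y\mapsto u_0(\gamma y)/\gamma$ is again a nonnegative Lipschitz mild minimizer on $\R^N$ with the same constant coefficients and the same half-space vanishing, so the iterated-blow-up identification at every scale combined with a half-space Liouville-type uniqueness for global mild minimizers vanishing on $\{y_N\le 0\}$ forces $u_0$ to coincide with the linear half-plane solution. Once this is available, every subsequential blow-up of $u$ at $0$ equals $\lambda^*(0)\,y_N^+$, hence $u_r\to\lambda^*(0)\,y_N^+$ uniformly on compacts, which unwinds exactly to the stated expansion $u(x)=\lambda^*(x_0)\langle x-x_0,\nu\rangle^+ + o(|x-x_0|)$. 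The technical heart is precisely this last step: the degenerate/singular nature of the $p(x_0)$-Laplacian prevents a direct appeal to the linear half-plane argument from \cite{AC}, and one must handle the non-standard growth carefully through the iterated blow-up together with the scale invariance of the mild minimizer class.
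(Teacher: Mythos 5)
The overall scaffolding of your argument (blow up, use the measure-theoretic normal to pin the support of the limit to $\{y_N>0\}$, use mild-minimizer machinery and Proposition~\ref{identific-alpha} to extract the slope $\lambda^*(0)$, then upgrade to the full-sequence blow-up limit) matches the paper's proof in spirit. But your last step --- bootstrapping the identification of the \emph{secondary} blow-up limit $\lambda^*(0)\,y_N^+$ to the \emph{global} equality $u_0\equiv\lambda^*(0)\,y_N^+$ --- is a genuine gap, and it is precisely where the paper does the real work. You invoke ``scale invariance of mild minimality'' plus ``a half-space Liouville-type uniqueness for global mild minimizers vanishing on $\{y_N\le 0\}$'', but neither of these is established anywhere, and knowing that $u_0(\gamma y)/\gamma\to\lambda^*(0)\,y_N^+$ for every sequence $\gamma\to 0$ only controls $u_0$ infinitesimally at the origin; it does not, by itself, say anything about $u_0$ at unit scale. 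A rescaling-invariance argument would require showing $u_0$ is homogeneous of degree one, which you have not done.

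The paper closes this gap with three specific ingredients that your proof is missing. First, it does not go straight to a second blow-up of $u_0$ (which it calls $U$); it first invokes Lemma~\ref{development11} --- the half-plane asymptotic development for a Lipschitz $p_0$-harmonic function vanishing on $\{x_1\le 0\}$ --- to get $U(x)=\alpha x_1^+ + o(|x|)$ for some $\alpha\ge 0$ directly, and \emph{then} uses the second blow-up combined with Propositions~\ref{blow-up-mild-minim} and \ref{identific-alpha} only to identify $\alpha=\lambda^*(x_0)$. Second, it uses Theorem~\ref{limsup-grad} (together with interior gradient convergence $\nabla u_{\lambda_j}\to\nabla U$ on $\{U>0\}$) to obtain the global one-sided bound $|\nabla U|\le\lambda^*(x_0)$, and hence $U\le\lambda^*(x_0)\,x_1$ on $\{x_1>0\}$; your proof never produces an upper bound on $U$ away from the origin. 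Third, with the development at the origin and the global upper bound in hand, the paper appeals to the comparison argument of Theorem~5.3 in \cite{LW4} to force $U\equiv\lambda^*(x_0)\,x_1^+$. Absent the gradient bound and Lemma~\ref{development11}, your ``Liouville-type'' step has no mechanism behind it; you would need to state and prove exactly the rigidity theorem you are invoking, and that proof would end up recapitulating the paper's three-ingredient argument.

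There is also a smaller but related weakness earlier on: in step 5 you claim that ``any uniform subsequential limit of $\{w_\gamma\}$ has the form $\alpha\,y_1^+$ up to a rotation'' by the same argument as Theorem~\ref{limsup-grad}/Theorem~5.1 of \cite{LW4}, but that argument is tailored to a blow-up sequence taken along points where the gradient approaches its limsup; applying it to an arbitrary secondary blow-up of $u_0$ at the origin requires more care than a bare citation, and the slope $\alpha$ could a priori vary with the subsequence. The paper sidesteps this by first obtaining the asymptotic development via Lemma~\ref{development11}, which fixes $\alpha$ once and for all, and only then feeding the limit $\alpha x_1^+$ into Proposition~\ref{identific-alpha}.
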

\begin{proof}
Take
$u_{\lambda}(x)=\frac{1}{\lambda} u(x_0+\lambda x).$ Let $\rho>0$ such
that $B_{\rho}(x_0)\subset\subset\Omega$. Since $u_{\lambda}\in
Lip(B_{\rho/\lambda})$ uniformly in $\lambda$, $u_{\lambda}(0)=0$,
there exist $\lambda_j\to 0$ and $U$ such that
$u_{\lambda_j}\to U$ uniformly on compact sets of $\R^N$.
Since $|\nabla u(x)|\le L_0$ in $B_{r_0}(x_0)$ for some positive $L_0$ and $r_0$ then,
for any $M>0$, $|\nabla u_{\lambda_j}(x)|\le L_0$ in $B_M(0)$ for $j$ large. Therefore, $|\nabla U(x)|\le L_0$ in $\R^N$ and
$U\in Lip(\R^ N)$.

Without loss of generality we assume that $x_0=0$, and $\nu=e_1$.  {}From
Lemma \ref{lemm-min-ig}, $\Delta_{p(\lambda x)} u_{\lambda}=\lambda f(\lambda x)$
in $\{u_{\lambda}>0\}$. Using the fact that $e_1$ is the
 inward normal in the measure theoretic sense, we have, for
fixed $k$,
$$|\{u_{\lambda}>0\}\cap\{x_1<0\}\cap B_k|\to 0 \quad \mbox{ as } \lambda \to 0.$$
Hence, $U=0$ in $\{x_1<0\}$. Moreover, $U$ is nonnegative in
$\{x_1>0\}$, $\Delta_{p_0} U=0$ in $\{U>0\}$ with $p_0=p(x_0)$ and $U$ vanishes in $\{x_1\leq
0\}$. Then, by Lemma \ref{development11} we have that there
exists $\alpha\geq 0$ such that
$$U(x)=\alpha x_1^++o(|x|).$$

Define
$U_{\lambda}(x)=\frac{1}{\lambda} U(\lambda x)$, then
$U_{\lambda}\to \alpha x_1^+$ uniformly on compact sets of
$\R^N$.

Now, by Theorem \ref{nodegfinal} and Remark \ref{equiv-nondeg}, we have, for some $c>0$ and $0<r<r_0$,
$$\frac{1}{r^N} \int_{B_r} u_{\lambda_j} \, dx \geq cr$$ and then
$$\frac{1}{r^N} \int_{B_r} U_{\lambda_j} \, dx \geq cr.$$

Therefore $\alpha>0$. Now, since $u$ is a nonnegative local  minimizer of functional $J$ in $\Omega$, then $u$ is locally Lipschitz and  it is a nonnegative  mild minimizer of $J$ in $\Omega$. Thus, by Proposition \ref{blow-up-mild-minim}, $U$ is a nonnegative Lipschitz mild minimizer of $J$ in $\R^ N$ with $p(x)\equiv p(x_0)$, $\lambda(x)\equiv\lambda(x_0)$ and $f\equiv 0$. Then, applying Proposition
\ref{blow-up-mild-minim} to $U$ we get that $U_0=\alpha x_1^+$ is also a nonnegative Lipschitz mild minimizer of $J$ in $\R^ N$ with $p(x)\equiv p(x_0)$, $\lambda(x)\equiv\lambda(x_0)$ and $f\equiv 0$.

Now, by Proposition \ref{identific-alpha}, $\alpha=\lambda^*(x_0).$

We have shown that
$$U(x)=\begin{cases} \lambda^*(x_0) x_1+o(|x|) &\quad x_1>0\\
0 &\quad x_1\leq 0.
       \end{cases}
$$

Then, using that $\Delta_{p(\lambda x)} u_{\lambda}=\lambda
f(\lambda x)$ in $\{u_{\lambda}>0\}$, by interior H\"older
gradient estimates we have $\nabla u_{\lambda_j}\rightarrow \nabla
U$ uniformly on compact subsets of $\{U>0\}$. Then, by Theorem
\ref{limsup-grad}, $|\nabla U|\leq \lambda^*(x_0)$ in $\R^N$.
As $U=0$ on $\{x_1=0\}$ we have, $U\leq \lambda^*(x_0)x_1$ in
$\{x_1>0\}$.

Now, proceeding as in the proof of Theorem 5.3 in  \cite{LW4}, we conclude that  $U\equiv \lambda^*(x_0)x_1^+$ and the result follows.
\end{proof}

\end{section}

\begin{section}{Energy minimizers of energy functional \eqref{funct-Jep}}
\label{sect-energy-minim-persing}

\smallskip
In this section  we prove existence of minimizers of the energy functional \eqref{funct-Jep} and, in the spirit of the previous section, we  develop an exhaustive analysis of the essential  properties  of functions $\uep$
which are  nonnegative local  minimizers of that energy. As a consequence we obtain results for
solutions $\uep$ to the singular perturbation problem $\pep(\fep,
p_{\ep})$ which  are nonnegative local energy minimizers and moreover, we  get
results for their limit functions $u$.

We start by pointing out that the same considerations in Definition \ref{def-loc-min} and Remarks \ref{rem-local} and \ref{rem-global} for functional \eqref{funct-J} apply to functional \eqref{funct-Jep} in the present section.

We first obtain

\begin{theo}\label{minimizers} Let $\Omega\subset\R^N$ be a bounded domain and let $\phi_\ep\in W^{1,\psubep(\cdot)}(\Omega)$
be such that $\|\phi_\ep\|_{1,\psubep(\cdot)}\le {\mathcal A}_1$,
with $1<p_{\min}\le p_{\ep}(x)\le p_{\max}<\infty$ and  $\|\nabla p_{\ep}\|_{L^{\infty}}\leq
L$.  Let $f^\ep\in L^\infty(\Omega)$ such that
$\|f^\ep\|_{L^\infty(\Omega)}\le {\mathcal A}_2$. There exists
$\uep\in W^{1,\psubep(\cdot)}(\Omega)$ that minimizes the energy
\begin{equation}\label{energy}
J_\ep(v)=\di \int_\Omega \Big(\frac{|\nabla
v|^{\psubep(x)}}{\psubep(x)}+B_\ep(v)+f^\ep v\Big)\, dx
\end{equation}
among functions $v\in W^{1,\psubep(\cdot)}(\Omega)$ such that $v-\phi_\ep \in W_0^{1,\psubep(\cdot)}(\Omega)$.
Here $B_\ep(s)=\int_0^s\beta_\ep(\tau)\,d\tau$.

Then, the function $\uep$ satisfies
\begin{equation}\label{ecuac-sin-signo}
 \Delta_{\psubep(x)}\uep=\beta_\ep(\uep)+f^\ep \quad\mbox{in}\quad\Omega
\end{equation}
and for every $\Omega'\subset\subset\Omega$ there exists
$C=C(\Omega', {\mathcal A}_1, {\mathcal A}_2, p_{\min}, p_{\max}, L)$ such that
\begin{equation}\label{cotsup}
\sup_{\Omega'} \uep\le C.
\end{equation}
\end{theo}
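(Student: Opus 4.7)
The plan is to mirror the direct-method proof of Theorem \ref{existence-minimizers-AC}, exploiting two simplifications relative to that case: the term $\int_\Omega B_\ep(v)\,dx$ replacing $\int_\Omega\lambda(x)\chi_{\{v>0\}}\,dx$ is continuous (not merely lower semicontinuous) along sequences converging a.e., because the primitive $B_\ep$ is continuous, nonnegative, and uniformly bounded by $M=\int\beta$; and the differentiability of $B_\ep$ yields the Euler--Lagrange identity \eqref{ecuac-sin-signo} as a genuine equality rather than a one-sided inequality.

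To build a minimizer, set $\K=\{v\in W^{1,\psubep(\cdot)}(\Omega)\colon v-\phi_\ep\in W_0^{1,\psubep(\cdot)}(\Omega)\}$. Since $\beta_\ep\ge 0$, one has $B_\ep\ge 0$ and therefore $J_\ep(v)\ge\frac{1}{p_{\max}}\int_\Omega|\nabla v|^{\psubep(x)}\,dx+\int_\Omega \fep v\,dx$, and the linear term is absorbed into the gradient contribution exactly as in \eqref{cotaJ-AC} by H\"older's inequality (Theorem \ref{holder}), Poincar\'e's inequality (Theorem \ref{poinc}), and the norm--modular relation of Proposition \ref{equi}. This bounds $J_\ep$ below on $\K$ and shows that any minimizing sequence $\{u_n\}\subset\K$ with $J_\ep(u_n)\le J_\ep(\phi_\ep)$ is bounded in $W^{1,\psubep(\cdot)}(\Omega)$ in terms only of $\mathcal{A}_1,\mathcal{A}_2,p_{\min},p_{\max},L$. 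By reflexivity (Theorem \ref{ref}), the embedding into $W^{1,p_{\min}}(\Omega)$ (Theorem \ref{imb}) and the compactness of $W^{1,p_{\min}}\hookrightarrow L^{p_{\min}}$, one extracts a subsequence with $u_n\rightharpoonup u$ weakly in $W^{1,\psubep(\cdot)}(\Omega)$, $u_n\to u$ a.e.\ and in $L^{p_{\min}}(\Omega)$, and $u\in\K$ by the weak closedness of $\K$.

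To pass to the limit, the gradient term is lower semicontinuous via the convexity inequality \eqref{G-AC} (with $p$ replaced by $\psubep$) paired with weak $L^{\psubep'(\cdot)}$-convergence of $|\nabla u_n|^{\psubep(x)-2}\nabla u_n$; the linear term converges by strong $L^{p_{\min}}$-convergence together with $\fep\in L^\infty(\Omega)$; and $\int B_\ep(u_n)\to\int B_\ep(u)$ by dominated convergence, since $0\le B_\ep\le M$ and $B_\ep$ is continuous. Hence $J_\ep(u)\le\liminf J_\ep(u_n)$, yielding a minimizer. The Euler--Lagrange equation \eqref{ecuac-sin-signo} follows by perturbing $\uep\mapsto \uep+t\xi$ with $\xi\in C_0^\infty(\Omega)$ for both signs of $t$; the increment $(B_\ep(\uep+t\xi)-B_\ep(\uep))/t$ converges pointwise to $\beta_\ep(\uep)\xi$ and is dominated by $\|\beta_\ep\|_{L^\infty}|\xi|$, so the two one-sided variations coincide and produce the weak formulation of the equation.

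Finally, for the local sup bound, the non-negativity $\beta_\ep(\uep)\ge 0$ promotes $\uep$ to a weak subsolution $\Delta_{\psubep(x)}\uep\ge -\|\fep\|_{L^\infty(\Omega)}$; combined with the modular estimate $\int_\Omega|\uep|^{\psubep(x)}\,dx\le C$ (Proposition \ref{equi}) that comes from the uniform $W^{1,\psubep(\cdot)}$-bound already obtained, Proposition~2.1 of \cite{Wo} delivers \eqref{cotsup} with constants of the stated form, precisely as at the end of Theorem \ref{existence-minimizers-AC}. No step poses a substantial new obstacle; the delicate point, as in Theorem \ref{existence-minimizers-AC}, is tracking the constants through the norm--modular conversions in the variable-exponent setting so that the final bound depends only on $\Omega'$, $\mathcal{A}_1$, $\mathcal{A}_2$, $p_{\min}$, $p_{\max}$, and $L$.
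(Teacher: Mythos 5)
Your proposal is correct and follows essentially the same route as the paper's own proof: direct method with the same norm--modular estimates (Proposition \ref{equi}, Theorems \ref{holder}, \ref{poinc}, \ref{ref}, \ref{imb}), lower semicontinuity of the gradient term via the convexity inequality \eqref{G-AC}, convergence of $\int B_\ep(u_n)$ along the a.e.\ convergent subsequence, two-sided variations for the Euler--Lagrange equation, and the combination $\Delta_{\psubep(x)}\uep\ge\fep\ge-\mathcal{A}_2$ with the modular bound and Proposition~2.1 of \cite{Wo} for \eqref{cotsup}. The only cosmetic difference is that you make explicit the dominated-convergence justifications that the paper leaves implicit.
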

\begin{proof} Let us prove first that a minimizer exists. In fact, let
$$
\mathcal{K}^{\ep}=\Big\{v\in W^{1,\psubep(\cdot)}(\Omega)\colon v - \phi_\ep\in W_0^{1,\psubep(\cdot)}(\Omega)\Big\}.
$$
In order to prove that $J_\ep$ is bounded from below in $\mathcal{K}^{\ep}$,  we observe that if $v\in\mathcal{K}^{\ep}$, then
$$
J_\ep(v)\ge\frac{1}{p_{\max}}\int_\Omega |\nabla v|^{\psubep(x)}\,+ \int_\Omega f^\ep v\, dx,
$$
and we have, by Theorem \ref{holder} and Theorem \ref{poinc},
\begin{align*}
\int_\Omega |f^\ep v|\, dx &\le 2\|\fep\|_{{\psubep}'(\cdot)}\|v\|_{\psubep(\cdot)}\le 2\|\fep\|_{{\psubep}'(\cdot)}(\|v-\phi_\ep\|_{\psubep(\cdot)}
+\|\phi_\ep\|_{\psubep(\cdot)})\\
&\le C_0\|\nabla v-\nabla\phi_\ep\|_{\psubep(\cdot)}+C_1\le  C_0\|\nabla
v\|_{\psubep(\cdot)}+C_2.
\end{align*}
If  $\Big(\int_{\Omega} |\nabla v|^{\psubep(x)}\, dx\Big)^{1/{p_{\min}}}\ge \Big(\int_{\Omega} |\nabla v|^{\psubep(x)}\, dx\Big)^{1/{p_{\max}}}$ we get,
by Proposition \ref{equi},
$$
\int_\Omega |f^\ep v|\, dx \le  C_0\Big(\int_{\Omega} |\nabla
v|^{\psubep(x)}\, dx\Big)^{1/{p_{\min}}}+C_2\le C_3 + \frac{1}{2\,
p_{\max}}\int_\Omega |\nabla v|^{\psubep(x)}\, dx.
$$
If, on the other hand, $\Big(\int_{\Omega} |\nabla v|^{\psubep(x)}\,
dx\Big)^{1/{p_{\min}}}< \Big(\int_{\Omega} |\nabla v|^{\psubep(x)}\,
dx\Big)^{1/{p_{\max}}}$, we get in an analogous way
$$
\int_\Omega |f^\ep v|\, dx \le  C_0\Big(\int_{\Omega} |\nabla
v|^{\psubep(x)}\, dx\Big)^{1/{p_{\max}}}+C_2\le  C_4 + \frac{1}{2\,
p_{\max}}\int_\Omega |\nabla v|^{\psubep(x)}\, dx.
$$
Taking $C_5=\max\{C_3, C_4\}$, we get
\begin{equation}\label{cotaJ}
J_\ep(v)\ge -C_5 + \frac{1}{2\, p_{\max}}\int_\Omega |\nabla
v|^{\psubep(x)}\, dx\ge -C_5,
\end{equation}
which shows that $J_\ep$ is bounded from below in $\mathcal{K}^{\ep}$.

At this point we want to remark that the constants $C_0,...,C_5$ above can be taken depending only on
${\mathcal A}_1, {\mathcal A}_2, p_{\min}, p_{\max}$ and $L$.

We now take  a minimizing sequence $\{u_n\}\subset\K^{\ep}$. Without loss of generality we can assume that $J_\ep(u_n)\le J_\ep(\phi_\ep)$, so
by  \eqref{cotaJ},$\int_{\Omega}|\nabla u_n|^{\psubep(x)}\le C_6$. By Proposition \ref{equi}, $\|\nabla
u_n-\nabla \phi_\ep\|_{\psubep(\cdot)}\leq C_7$ and, as  $u_n - \phi_\ep\in W_0^{1,\psubep(\cdot)}(\Omega)$, by Theorem \ref{poinc} we
 have $\|u_n-\phi_\ep\|_{\psubep(\cdot)}\leq C_8$. Therefore, by Theorem
\ref{ref}
 there exist a
subsequence (that we still call $u_n$) and a function $\uep\in
W^{1,\psubep(\cdot)}(\Omega)$ such that
\begin{equation}
\label{cotanorma} ||\uep||_{W^{1,\psubep(\cdot)}(\Omega)}\le {\bar C},\quad \mbox{ with } {\bar C}={\bar C}({\mathcal A}_1, {\mathcal A}_2, p_{\min}, p_{\max},L),
\end{equation}
$$
u_n \rightharpoonup  \uep \quad \mbox{weakly in } W^{1,\psubep(\cdot)}(\Omega),
$$
and, by Theorem \ref{imb},
\begin{align*}
&  u_n \rightharpoonup  \uep \quad \mbox{weakly in }
W^{1,p_{\min}}(\Omega).
\end{align*}
Now, by the compactness of the immersion
$W^{1,{p_{\min}}}(\Omega)\hookrightarrow L^{p_{\min}}(\Omega)$ we
have that, for a subsequence that we still denote by $u_n$,
\begin{align*}
u_n &\to \uep \quad \mbox{in }L^{p_{\min}}(\Omega),\\
u_n &\to \uep \quad \mbox{a.e. } \Omega.
\end{align*}

As $\K^{\ep}$ is convex and closed, it is weakly closed, so $\uep\in \K^{\ep}$.

It follows that
\begin{align*}
\lim_{n\to\infty} \int_{\Omega}B_{\ep}(u_n)\, dx &=  \int_{\Omega}B_{\ep}(\uep)\, dx,\\
\lim_{n\to\infty} \int_{\Omega}\fep u_n\, dx &=  \int_{\Omega} \fep\uep\, dx,\\
 \int_{\Omega} \frac{|\nabla \uep|^{\psubep(x)}}{\psubep(x)}\, dx &\le \liminf_{n\to\infty}
\int_{\Omega}\frac{|\nabla u_n|^{\psubep(x)}}{\psubep(x)}\, dx.
\end{align*}
In order to prove the last inequality we proceed as in \eqref{G-AC} in Theorem \ref{existence-minimizers-AC}.

Hence
$$
J_\ep(\uep)\le \liminf_{n\to\infty}J_\ep(u_n) =
\inf_{v\in\K^{\ep}} J_\ep(v).
$$
Therefore, $\uep$ is a minimizer of $J_\ep$ in $\K^{\ep}$.

Let us now prove that there holds \eqref{ecuac-sin-signo}.
Let $t>0$ and $\xi\in C^{\infty}_0(\Omega)$. Using the minimality
of $\uep$  we have
\begin{align*}0 &\leq \frac{1}{t} (J_{\ep}(\uep-t
\xi)-J_{\ep}(\uep))= \frac{1}{t} \int_{\Omega}\Big(\frac{|\nabla
\uep-t \nabla \xi|^{\psubep(x)}}{\psubep(x)}-\frac{|\nabla \uep|^{\psubep(x)}}{\psubep(x)}\Big)\, dx \, +\\
&\frac{1}{t}\int_{\Omega}\Big(B_{\ep}(\uep-t\xi)-B_{\ep}(\uep)\Big)\, dx
+\frac{1}{t}\int_{\Omega}\Big(\fep(\uep-t\xi)-\fep\uep\Big)\, dx\\
& \leq -\int_{\Omega}
 |\nabla \uep-t \nabla \xi|^{\psubep(x)-2}(\nabla \uep-t \nabla
\xi)\cdot \nabla \xi\, dx
+\frac{1}{t}\int_{\Omega}\Big(B_{\ep}(\uep-t\xi)-B_{\ep}(\uep)\Big)\, dx
-\int_{\Omega}\fep\xi\, dx
\end{align*}
and if
we take $t\rightarrow 0$, we obtain
\begin{equation}\label{unsigno}
0\leq -\int_{\Omega}
|\nabla \uep|^{\psubep(x)-2}\nabla \uep\cdot\nabla \xi\, dx-\int_{\Omega}\beta_{\ep}(\uep)\xi\, dx-\int_{\Omega}\fep\xi\, dx.
\end{equation}
If we now take $t<0$, and proceed in a similar way, we obtain the opposite sign in \eqref{unsigno} and \eqref{ecuac-sin-signo} follows.

Finally, in order to prove \eqref{cotsup}, we observe that, from Proposition \ref{equi} and estimate \eqref{cotanorma},
we have that $\int_{\Omega}|\uep|^{\psubep(x)}\, dx \le {\bar C_1}({\mathcal A}_1, {\mathcal A}_2, p_{\min}, p_{\max},L)$. Thus, the desired estimate
follows from the application of Proposition 2.1 in \cite{Wo}, since $\Delta_{\psubep(x)}\uep\ge f^\ep \ge -{\mathcal A}_2$ in $\Omega$.
\end{proof}

\begin{rema}\label{rema-signo}
We are interested in studying the behavior of a family $\uep$ of nonnegative local minimizers of the energy $J_{\ep}$ defined in \eqref{energy}.

If $u_{\ep}$ are as in Theorem \ref{minimizers} then $u_{\ep}$ satisfy \eqref{ecuac-sin-signo} and it follows from Proposition 2.1 in \cite{Wo}
that $u_{\ep}\in L^\infty_{\rm loc}(\Omega).$ Moreover, by Theorem 1.1 in \cite{Fan} $u_{\ep}\in C^1(\Omega)$ and $\nabla u_{\ep}$ are locally
H\"older continuous in $\Omega$.

If we have, for instance, that $\phi_\ep\ge 0$ in $\Omega$ and $\fep\le 0$ in $\Omega$,
then we have $\uep\ge 0$ in $\Omega$. In fact, the result follows by observing that, for every $\ep>0$,
${\xi}^{\ep}=\min (u_{\ep},0)\in W_0^{1,\psubep(\cdot)}(\Omega)$. Then,  we get \eqref{unsigno} for the test function ${\xi}^{\ep}$
and, using that $\beta_{\ep}(\uep){\xi}^{\ep}=0$ and $\fep\le 0$, we obtain $\int_{\Omega} |\nabla {\xi}^{\ep}|^{\psubep(x)}\,dx=0$, which implies
$\uep\ge 0$ in $\Omega$.
\end{rema}

\begin{rema}\label{rema-apply}
Let $\uep$ be a family of nonnegative local minimizers of the energy
 functional $J_{\ep}$  defined in \eqref{energy} which are uniformly bounded, with $\fep$ and $p_\ep$ uniformly bounded
 (like for instance the one constructed in Theorem \ref{minimizers}
and Remark \ref{rema-signo}). Then, as in Theorem \ref{minimizers} we deduce  that   $\uep$ are solutions to
$P_\ep(\fep,p_\ep)$ and thus,  all the results in our work \cite{LW4} apply
to this family. In particular, there hold the local uniform gradient
estimates of Theorem 2.1 in \cite{LW4} and  the results on passage to
the limit in Lemma 3.1 in \cite{LW4}.
\end{rema}

We also have

\begin{theo}\label{limit-minim}
Assume that $1<p_{\min}\le \psubepj(x)\le p_{\max}<\infty$ and that
$\|\nabla \psubepj\|_{L^{\infty}}\leq L$.
 Let $\uepj\in W^{1,\psubepj(\cdot)}(\Omega)$ be nonnegative local minimizers of
\begin{equation}\label{ener-epj}
J_{\ep_j}(v)=\di \int_\Omega \Big(\frac{|\nabla
v|^{\psubepj(x)}}{\psubepj(x)}+B_{\ep_j}(v)+\fepj v\Big)\, dx,
\end{equation}
with $\|u^{\ep_j}\|_{L^{\infty}(\Omega)}\leq L_1$
 and $\|\fepj\|_{L^\infty(\Omega)}\le L_2$, such that $u^{\ep_j}\rightarrow u$ uniformly on compact subsets of
$\Omega$, $\fepj\rightharpoonup f$ $*-$weakly in
$L^\infty(\Omega)$, $p_{\epj}\to p$ uniformly on compact subsets
of $\Omega$  and $\ep_j\to 0$. Then, $u$ is locally Lipschitz. Let
$B_r=B_r(x_0)\subset\subset\Omega$ and denote
\begin{align}
 J(v)=\int_\Omega\Big(\frac{|\nabla
 v|^{p(x)}}{p(x)}+M\chi_{\{v>0\}}+fv\Big)\,dx,\label{energy-funct}\\
 J_{r,0}(v)=\int_{B_r}\Big(\frac{|\nabla  v|^{p(x)}}{p(x)}+M\chi_{\{v>0\}}+fv\Big)\,dx,\end{align}
where $M=\int \beta(s)\, ds$.
\item{i)} If $v\in W^{1,p(\cdot)+\delta}(B_r)$ for some $\delta>0$ and $v-u\in W_0^{1,p(\cdot)}(B_r)$, then $J_{r,0}(u)\le J_{r,0}(v)$.
\item{ii)} If there holds that $p_{\epj}\le p$ in $\Omega$ and $u\in W^{1,p(\cdot)}(\Omega)$, then $u$ is a nonnegative local minimizer of functional \eqref{energy-funct}.
\end{theo}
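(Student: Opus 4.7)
The plan is to work in three stages: first establish the Lipschitz continuity of $u$ from the results of \cite{LW4} recalled in Remark \ref{rema-apply}, then prove (i) by a comparison argument on a slightly enlarged ball, and finally note that (ii) follows by a minor variant of the same argument. For the first stage, since the $u^{\ep_j}$ are nonnegative local minimizers with uniform $L^\infty$, $\fepj$ and $p_{\ep_j}$ bounds, they solve $P_{\ep_j}(\fepj, p_{\ep_j})$, and Theorem 2.1 in \cite{LW4} produces a uniform local Lipschitz bound on $\{u^{\ep_j}\}$ that passes to the limit $u$. In particular, up to subsequence, $\nabla u^{\ep_j} \rightharpoonup \nabla u$ weakly-$*$ in $L^{\infty}_{\rm loc}(\Omega)$, which will be crucial below.

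For (i), I fix $B_r = B_r(x_0) \subset\subset \Omega$ and $v \in W^{1,p(\cdot)+\delta}(B_r)$ with $v-u \in W_0^{1,p(\cdot)}(B_r)$, and build for small $h>0$ the interpolating function
\begin{equation*}
v_{h,j}(x) = \begin{cases} v(x), & x \in B_r, \\ u(x) + \dfrac{|x|-r}{h}\bigl(u^{\ep_j}(x) - u(x)\bigr), & x \in B_{r+h}\setminus B_r, \end{cases}
\end{equation*}
on $B_{r+h} \subset\subset \Omega$. The traces match across $\partial B_r$ (since $v = u$ there) and $v_{h,j} = u^{\ep_j}$ on $\partial B_{r+h}$, so $v_{h,j} - u^{\ep_j} \in W_0^{1,\psubepj(\cdot)}(B_{r+h})$ for $j$ large (using $\psubepj \le p + \delta/2$ on $B_{r+h}$ eventually), and local minimality of $u^{\ep_j}$ yields $J_{\ep_j, r+h}(u^{\ep_j}) \le J_{\ep_j, r+h}(v_{h,j})$. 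The thin-shell contribution on $B_{r+h}\setminus B_r$ is controlled by $C h r^{N-1}$ using the uniform Lipschitz bound for $u, u^{\ep_j}$ together with $\|u^{\ep_j}-u\|_{L^\infty} \to 0$.

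The heart of the proof is then passing to the limit in $j$ in the inequality above. On the right-hand side over $B_r$, I would use dominated convergence since $\psubepj \to p$ uniformly and $v \in W^{1,p(\cdot)+\delta}$; on the left-hand side I need three separate lower-semicontinuity statements. The gradient term is handled via convexity of $t \mapsto t^{\psubepj(x)}/\psubepj(x)$ combined with the weak-$*$ convergence $\nabla u^{\ep_j} \rightharpoonup \nabla u$ and uniform convergence $\psubepj \to p$, yielding $\int_{B_r} |\nabla u|^{p(x)}/p(x) \le \liminf \int_{B_r} |\nabla u^{\ep_j}|^{\psubepj(x)}/\psubepj(x)$. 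The main obstacle --- the singular-perturbation term --- is handled by Fatou: for $x \in \{u>0\}$ one has $u^{\ep_j}(x) > \ep_j$ eventually, so $B_{\ep_j}(u^{\ep_j}(x)) = M$, whereas on $\{u=0\}$ the integrand is nonnegative, giving $\int_{B_r} M\chi_{\{u>0\}} \le \liminf \int_{B_r} B_{\ep_j}(u^{\ep_j})$. The $f$-term passes by weak-$*$ convergence of $\fepj$ against uniformly convergent $u^{\ep_j}$. Combining these and then letting $h \to 0$ gives $J_{r,0}(u) \le J_{r,0}(v)$, establishing (i). For (ii), the very same construction works on any $B_r \supset \Omega'$ where $v - u$ has compact support; the only new point is the convergence $\int_{B_r} |\nabla v|^{\psubepj(x)}/\psubepj(x) \to \int_{B_r} |\nabla v|^{p(x)}/p(x)$ when merely $v \in W^{1,p(\cdot)}$, which now follows from the pointwise bound $|\nabla v|^{\psubepj(x)} \le 1 + |\nabla v|^{p(x)}$ furnished by the assumption $\psubepj \le p$, together with dominated convergence.
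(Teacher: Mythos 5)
Your proposal is correct and follows essentially the same approach as the paper's own proof: the same interpolating construction $v_{h,j}$ on a thin shell, the same three lower-semicontinuity arguments (Fatou for $B_{\ep_j}(u^{\ep_j})$, convexity plus weak convergence for the gradient term, weak-$*$ convergence against uniformly convergent $u^{\ep_j}$ for the $f$-term), and the same final $h\to 0$ step. The only superficial difference is that you establish (i) first and deduce (ii) as a variant, while the paper does (ii) first and treats (i) as the variant; the admissibility of $v_{h,j}$ as a competitor hinges in your case (i) on $\psubepj \le p + \delta$ for large $j$ from the uniform convergence, and in case (ii) on the hypothesis $\psubepj \le p$, exactly as you observe.
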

\begin{proof} We first observe that the estimates of Theorem 2.1 in \cite{LW4} apply, as well as the results in Lemma 3.1 in \cite{LW4}. In particular,
 $\uepj$ are locally uniformly Lipschitz and therefore $u$ is locally Lipschitz in
$\Omega$.

We will follow the ideas in Theorem 1.16 in \cite{CS}. In fact, let $B_r=B_r(x_0)\subset\subset\Omega$, for simplicity assume $x_0=0$, and denote
\begin{align*}
J_{r,j}(v)= \int_{B_r} \Big(\frac{|\nabla v|^{\psubepj(x)}}{\psubepj(x)}+B_{\ep_j}(v)+\fepj v\Big)\, dx,\\
J_{r,0}(v)=\int_{B_r}\Big(\frac{|\nabla  v|^{p(x)}}{p(x)}+M\chi_{\{v>0\}}+fv\Big)\,dx.
\end{align*}

Let us first assume that ii) holds.

Given $v\in W^{1,p(\cdot)}(B_r)$ such that $v-u\in W_0^{1,p(\cdot)}(B_r)$, we want to show that
\begin{equation}\label{minimiza}
J_{r,0}(u)\le J_{r,0}(v).
\end{equation}
For $h>0$ small, we define
$$
v_{h,j}=
\begin{cases}
v & \mbox{in }B_r,\\
u+\frac{|x|-r}{h}(\uepj-u) & \mbox{in }B_{r+h} \setminus B_r.
\end{cases}
$$
Then, since $p_{\epj}\le p$, it follows that $v_{h,j}\in W^{1,p_{\epj}(\cdot)}(B_{r+h})$, $v_{h,j}-\uepj\in W_0^{1,p_{\epj}(\cdot)}(B_{r+h})$ and there
holds
\begin{align*}
&J_{r+h,j}(v_{h,j})= \int_{B_{r+h}} \Big(\frac{|\nabla v_{h,j}|^{\psubepj(x)}}{\psubepj(x)}+B_{\ep_j}(v_{h,j})+\fepj v_{h,j}\Big)\le J_{r,0}(v)\\
&\qquad+\int_{B_{r+h}\setminus B_r} \Big(\frac{|\nabla v_{h,j}|^{\psubepj(x)}}{\psubepj(x)}+B_{\ep_j}(v_{h,j})+\fepj v_{h,j}\Big)\
+\int_{B_r}\Big( \frac{|\nabla v|^{\psubepj(x)}}{\psubepj(x)}-\frac{|\nabla  v|^{p(x)}}{p(x)}+(\fepj-f)v\Big) \\
&\quad\le J_{r,0}(v)+ C_0hr^{N-1} + C_1\int_{B_{r+h}\setminus B_r} \frac{|\uepj-u|^{\psubepj(x)}}{h^{\psubepj(x)}}+
\int_{B_r}\Big( \frac{|\nabla v|^{\psubepj(x)}}{\psubepj(x)}-\frac{|\nabla  v|^{p(x)}}{p(x)}+(\fepj-f)v\Big).
\end{align*}
Therefore,
\begin{equation}\label{limsupvhj}
\limsup_{j\to\infty}J_{r+h,j}(v_{h,j})\le J_{r,0}(v)+ C_0hr^{N-1}.
\end{equation}
On the other hand,
$$
M\chi_{\{u>0\}}\le\liminf_{j\to\infty} B_{\epj}(\uepj),
$$
which implies
\begin{equation}\label{primterm}
\int_{B_r} M\chi_{\{u>0\}}\,dx \le
\liminf_{j\to\infty} \int_{B_r}B_{\epj}(\uepj)\, dx.
\end{equation}
In addition, since
$\nabla \uepj \rightharpoonup \nabla u$ weakly  in
$L^{p(\cdot)}({B_r})$,
arguing in a similar way as in Theorem \ref{minimizers}, we get
\begin{equation}\label{segterm}
 \int_{B_r} \frac{|\nabla u|^{p(x)}}{p(x)}\, dx \le \liminf_{j\to\infty}
\int_{B_r}\frac{|\nabla \uepj|^{p(x)}}{p(x)}\, dx=\liminf_{j\to\infty}
\int_{B_r}\frac{|\nabla \uepj|^{\psubepj(x)}}{\psubepj(x)}\, dx.
\end{equation}
Now, using \eqref{primterm} and \eqref{segterm}, and the fact that
$\uepj$ are nonnegative local minimizers of $J_{\epj}$,  we obtain
$$
J_{r,0}(u)\le \liminf_{j\to\infty} J_{r,j}(\uepj)\le \liminf_{j\to\infty} J_{r+h,j}(\uepj)+ C_2hr^{N-1}
\le\liminf_{j\to\infty} J_{r+h,j}(v_{h,j})+ C_2hr^{N-1},
$$
which in combination with \eqref{limsupvhj} gives
$$
J_{r,0}(u)\le J_{r,0}(v)+ C_3hr^{N-1}.
$$
Therefore, letting $h\to 0$ we obtain \eqref{minimiza}.

Finally, if there holds i) we can proceed exactly as above to
prove that \eqref{minimiza} holds, using that in this case we also
have $v_{h,j}\in W^{1,p_{\epj}(\cdot)}(B_{r+h})$,
$v_{h,j}-\uepj\in W_0^{1,p_{\epj}(\cdot)}(B_{r+h})$  for large
$j$.
\end{proof}

\begin{rema}\label{nodeg-dist-min}
 Let $\uep$ be a family of nonnegative local minimizers of
$J_\ep(v)=\int_\Omega \big( \frac{|\nabla v|^{\psubep(x)}}{\psubep(x)}+B_\ep(v)+f^\ep v\big)\, dx$,
with $1<p_{\min}\le p_{\ep}(x)\le p_{\max}<\infty$, $\|\nabla p_{\ep}\|_{L^{\infty}}\leq L$, $\|\uep\|_{L^{\infty}(\Omega)}\leq L_1$ and
$\|f^\ep\|_{L^\infty(\Omega)}\le L_2$.
Then, with a minor modification of the proof of Theorem \ref{nodegdist}, we can prove that, given $\Omega'\subset\subset\Omega$,
there  exist positive constants $c_0$ and $\rho$  such that, for every $x_0\in\Omega'$,
\begin{equation*}
 \uep>\ep \quad \mbox{ in } B_{d_0}(x_0) \mbox{ with } \ 0<d_0\le \rho, \quad \mbox{ implies } \uep(x_0)\ge c_0 d_0,
  \end{equation*}
and, in particular,
\begin{equation*}
 \uep(x_0)\ge c_0 {\rm dist}(x_0,\{\uep\le \ep\}), \quad \mbox{ if } \ {\rm dist}(x_0,\{\uep\le \ep\})\le \rho,
  \end{equation*}
with $c_0$ and $\rho$ depending only on $p_{\min}, p_{\max}, L, L_1, L_2, M=\int\beta(s)ds$ and ${\rm dist}(\Omega',\partial\Omega)$.

As a consequence it follows that, if $u=\lim\uepj$ as $\epj\to 0$ then, for every $x_0\in\Omega'$,
\begin{equation*}
 u(x_0)\ge c_0 {\rm dist}(x_0,\{u\equiv 0\}), \quad \mbox{ if } \ {\rm dist}(x_0,\{u\equiv 0\})\le \rho.
  \end{equation*}
\end{rema}

\medskip

As in the case of minimizers of the energy \eqref{funct-J}, for
minimizers of the singular perturbation problem we have

\begin{theo}\label{nodegfinal-sing} Let $p_{\epj}$, $\fepj$, $\uepj$,  $\ep_j$, $p$,  $f$ and $u$   be as
in Theorem \ref{limit-minim}.
Let $\Omega'\subset\subset\Omega$. There exist  constants  $c>0$, $r_0>0$ such that if  $x_0\in \Omega'\cap\partial\{u>0\}$ and $r\le r_0$ then
$$\sup_{B_{r}(x_0)} u\geq c r.$$
The constants depend only  on N, $p_{\min}, p_{\max}, L, L_1, L_2, M, ||\beta||_{L^{\infty}}$ and ${\rm dist}(\Omega',\partial\Omega)$.
\end{theo}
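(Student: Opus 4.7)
The plan is to mimic the two-step structure of the proof of Theorem \ref{nodegfinal}, but now applied \emph{directly} to the limit function $u$ rather than to minimizers of the Alt--Caffarelli-type functional. The reason this works is that $u$ inherits enough structure from the $u^{\varepsilon_j}$ to fall under the hypotheses of Lemma \ref{itera1}: by Theorem \ref{limit-minim} $u$ is locally Lipschitz in $\Omega$, so $u\in C(\Omega)$ and $\nabla u\in L^{\infty}_{\mathrm{loc}}(\Omega)$; by Remark \ref{rema-apply} together with the passage-to-the-limit results of Lemma 3.1 in \cite{LW4}, we have $\Delta_{p(x)}u=f$ in $\{u>0\}$; and by Remark \ref{nodeg-dist-min} the distance nondegeneracy $u(x)\ge c_0\,\mathrm{dist}(x,\{u\equiv 0\})$ holds on $\Omega'$ whenever that distance is at most some $\rho>0$, with $c_0$ and $\rho$ depending only on the allowed quantities.

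With these three ingredients in hand I would invoke Lemma \ref{itera1} for $u$ itself, yielding constants $\delta_0>0$ and $\rho_0>0$ such that
\[
\sup_{B_{d(x)}(x)}u\ge(1+\delta_0)\,u(x)\qquad
\text{for every } x\in\Omega'\cap\{u>0\} \text{ with } d(x)=\mathrm{dist}(x,\{u\equiv 0\})\le\rho_0,
\]
where all constants depend only on $N,p_{\min},p_{\max},L,L_1,L_2,M,\|\beta\|_{L^{\infty}}$ and $\mathrm{dist}(\Omega',\partial\Omega)$. The proof then follows the two-step pattern of Theorem \ref{nodegfinal} verbatim. Step 1: for $x_0\in\Omega'\cap\{u>0\}$ with $d(x_0)\le\bar\rho$ and $r\le\bar r$, split into the easy case $d(x_0)\ge r/8$ (where distance nondegeneracy immediately gives $u(x_0)\ge c_0 r/8$) and the case $d(x_0)<r/8$ (where one builds a polygonal path inside $B_r(x_0)$ starting from $x_0$ on which the values of $u$ are pushed up by iterating the inequality above, exactly as in Theorem~1.9 of \cite{CS}, with the mean-value argument of \cite{CS} replaced by Lemma~\ref{itera1}). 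Step 2: for a free boundary point $x_0\in\Omega'\cap\partial\{u>0\}$, pick any $x_1\in B_{r/2}(x_0)\cap\{u>0\}\cap\Omega'$; then $\mathrm{dist}(x_1,\{u\equiv 0\})\le|x_1-x_0|\le\bar\rho$, and applying Step 1 on $B_{r/2}(x_1)$ yields
\[
\sup_{B_r(x_0)}u\;\ge\;\sup_{B_{r/2}(x_1)}u\;\ge\;\bar c\,\frac{r}{2}.
\]

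The only genuine obstacle is bookkeeping the constants: we must verify that the Lipschitz norm of $u$, and hence the constants in Remark \ref{nodeg-dist-min} and Lemma \ref{itera1}, depend only on the quantities listed in the statement. This is not new work, however: the uniform local Lipschitz estimate for the $u^{\varepsilon_j}$ (Theorem 2.1 in \cite{LW4}) has exactly the right dependence, and passes to the limit $u$; from there Remark \ref{nodeg-dist-min} and Lemma \ref{itera1} already carry the required dependencies. Consequently the proof closes without any further rescaling or blow-up analysis beyond what is invoked through those two results.
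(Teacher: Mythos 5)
Your proof is correct and follows essentially the same route as the paper, which simply states that the proof of Theorem~\ref{nodegfinal} carries over with Theorem~\ref{nodegdist} replaced by Remark~\ref{nodeg-dist-min}. You have merely spelled out in more detail the ingredients (local Lipschitz bound, $\Delta_{p(x)}u=f$ in $\{u>0\}$, distance nondegeneracy from Remark~\ref{nodeg-dist-min}, then Lemma~\ref{itera1} and the two-step argument of Theorem~\ref{nodegfinal}) that the paper invokes implicitly.
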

\begin{proof}
The proof follows as that of Theorem \ref{nodegfinal}, replacing Theorem \ref{nodegdist} by Remark \ref{nodeg-dist-min}.
\end{proof}

In an analogous way as we obtained for minimizers of functional
\eqref{funct-J}, for minimizers of the singular perturbation
problem we have

\begin{theo}\label{densprop-sing} Let $p_{\epj}$, $\fepj$, $\uepj$,  $\ep_j$, $p$,  $f$ and $u$   be as
in Theorem \ref{limit-minim}.
 Let $\Omega'\subset\subset\Omega$. There exist constants $\tilde{c}\in (0,1)$ and ${\tilde r}_0>0$  such that, if $x_0\in\Omega'\cap\partial\{u>0\}$ with
$B_r(x_0)\subset \Omega'$ and $r\le {\tilde r}_0$, there holds
\begin{equation*}
 \frac{|B_r(x_0)\cap\{u>0\}|}{|B_r(x_0)|} \leq 1-\tilde{c}.
  \end{equation*}
The constants depend only on N, $p_{\min}, p_{\max}, L, L_1, L_2, M, ||\beta||_{L^{\infty}}$ and ${\rm dist}(\Omega',\partial\Omega)$.
 \end{theo}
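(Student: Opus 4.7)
The strategy is to mirror the argument of Theorem \ref{densprop}, exploiting the mild minimality property of the limit function $u$ provided by Theorem \ref{limit-minim} (i) together with the fact that $\Delta_{p(x)} u \ge f$ in $\Omega$ in the sense of distributions (this follows by passing to the limit in the equation $\Delta_{\psubepj(x)}\uepj=\beta_{\epj}(\uepj)+\fepj$, using $\beta_{\epj}(\uepj)\ge 0$ together with the local uniform gradient estimates of Theorem 2.1 in \cite{LW4} and the convergence in Lemma 3.1 of \cite{LW4}).

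I would argue by contradiction. Assume there exist sequences $x_k\in\Omega'\cap\partial\{u>0\}$ and $r_k\to 0$ with $B_{r_k}(x_k)\subset\Omega'$ such that $|B_{r_k}(x_k)\cap\{u=0\}|/|B_{r_k}(x_k)|\to 0$. Passing to a subsequence, $x_k\to\hat x$. Rescale by setting $\bar u_k(y)=u(x_k+r_k y)/r_k$, $\bar p_k(y)=p(x_k+r_k y)$ and $\bar f_k(y)=r_k f(x_k+r_k y)$. Then $\bar u_k\ge 0$ is uniformly Lipschitz in $B_1$ (since $u$ is locally Lipschitz by Theorem \ref{limit-minim}), $0\in\partial\{\bar u_k>0\}$, $\varepsilon_k:=|B_1\cap\{\bar u_k=0\}|\to 0$, and by Theorem \ref{nodegfinal-sing},
\begin{equation*}
\sup_{B_\sigma}\bar u_k\ge c\sigma\qquad\text{for }0<\sigma<1.
\end{equation*}
Moreover $\Delta_{\bar p_k(y)}\bar u_k\ge\bar f_k$ in $B_1$, $\bar p_k\to p_0:=p(\hat x)$ and $\bar f_k\to 0$ uniformly on compacts of $B_1$.

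Next, let $v_k\in W^{1,\bar p_k(\cdot)}(B_{1/2})$ solve $\Delta_{\bar p_k(y)}v_k=\bar f_k$ in $B_{1/2}$ with $v_k-\bar u_k\in W_0^{1,\bar p_k(\cdot)}(B_{1/2})$. By comparison $v_k\ge\bar u_k\ge 0$, by Lemma \ref{ppio-max-con-a} and Remark \ref{ppio-max-gral} $v_k$ is uniformly bounded, and by the $C^{1,\alpha}$ estimates of \cite{Fan} $v_k$ is bounded in $C^{1,\alpha}(\overline{B_{1/4}})$ uniformly in $k$; in particular $v_k\in W^{1,\bar p_k(\cdot)+\delta}(B_{1/2})$ for any $\delta>0$, so $v_k$ is an admissible competitor in the sense of Theorem \ref{limit-minim} (i). Rescaling that mild minimality to $B_{1/2}$, we get $\tilde J_{1/2,k}(\bar u_k)\le\tilde J_{1/2,k}(v_k)$ where $\tilde J_{1/2,k}(w)=\int_{B_{1/2}}\big(\frac{|\nabla w|^{\bar p_k(y)}}{\bar p_k(y)}+M\chi_{\{w>0\}}+\bar f_k w\big)\,dy$. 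Since $0\le\chi_{\{v_k>0\}}-\chi_{\{\bar u_k>0\}}\le\chi_{\{\bar u_k=0\}}$, this yields
\begin{equation*}
\int_{B_{1/2}}\Big(\frac{|\nabla\bar u_k|^{\bar p_k(y)}}{\bar p_k(y)}-\frac{|\nabla v_k|^{\bar p_k(y)}}{\bar p_k(y)}\Big)\,dy\le M\varepsilon_k+\|\bar f_k\|_{\infty}\int_{B_{1/2}}|v_k-\bar u_k|\,dy\longrightarrow 0.
\end{equation*}

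Combining this estimate with the monotonicity inequalities \eqref{desigualdades} and arguing exactly as in the derivation of \eqref{wk} in Theorem \ref{densprop}, I obtain $\int_{B_{1/2}}|\nabla\bar u_k-\nabla v_k|^{\bar p_k(y)}\,dy\to 0$. Extracting further subsequences, $v_k\to v_0$ in $C^1$ on compacts of $B_{1/2}$ with $\Delta_{p_0}v_0=0$, and $\bar u_k\to u_0$ uniformly on compacts. Poincar\'e's inequality (Theorem \ref{poinc}) applied to $v_k-\bar u_k\in W_0^{1,\bar p_k(\cdot)}(B_{1/2})$ together with the above gradient convergence forces $v_0=u_0$ in $B_{1/2}$, hence $\Delta_{p_0}u_0=0$ in $B_{1/2}$. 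Nondegeneracy gives $\sup_{B_{1/4}}u_0\ge c/4>0$, while $u_0(0)=\lim\bar u_k(0)=0$ and $u_0\ge 0$, contradicting the strong maximum principle. The main technical obstacle is to correctly rescale and invoke the mild minimality of $u$ from Theorem \ref{limit-minim} (i), since the available minimality is weaker than for the $\uepj$ themselves and requires the competitor to belong to the slightly better space $W^{1,p(\cdot)+\delta}$; the $C^{1,\alpha}$ regularity of $v_k$ resolves this and makes the remainder of the comparison argument run parallel to the proof of Theorem \ref{densprop}.
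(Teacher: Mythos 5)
Your proposal follows the same route as the paper: the paper's own proof is the one-line remark that one repeats the contradiction argument of Theorem \ref{densprop}, obtaining the analogue of \eqref{baruk-min} through the mild minimality of Theorem \ref{limit-minim}(i) applied to the comparison function $v_k$. Your overall setup (rescaling $u$, invoking Theorem \ref{nodegfinal-sing}, constructing $v_k$, using comparison to get $v_k\ge\bar u_k$, passing to the limit, and concluding via the strong maximum principle) is correct and is exactly what the paper intends.

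There is, however, one flawed justification. You write that $v_k$ is bounded in $C^{1,\alpha}(\overline{B_{1/4}})$ by \cite{Fan}, and then assert that ``in particular $v_k\in W^{1,\bar p_k(\cdot)+\delta}(B_{1/2})$ for any $\delta>0$.'' This does not follow: the interior $C^{1,\alpha}$ estimate controls $v_k$ on $\overline{B_{1/4}}$ only, and gives no information about integrability of $\nabla v_k$ near $\partial B_{1/2}$, where the boundary datum $\bar u_k$ is merely Lipschitz. To verify that $v_k$ is an admissible competitor for Theorem \ref{limit-minim}(i) you need a higher-integrability (Meyers-type) estimate for the boundary value problem on all of $B_{1/2}$; this is precisely what the paper invokes, citing Lemma 4.1 in \cite{Fan}, yielding $v_k\in W^{1,\bar p_k(\cdot)+\delta_k}(B_{1/2})$ for \emph{some} (not any) small $\delta_k>0$. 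With this substitution your argument is complete.
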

\begin{proof}
The proof follows as that of Theorem \ref{densprop}. In this case we obtain estimate \eqref{baruk-min} by using part i) in Theorem \ref{limit-minim},
 since $v_k\in W^{1,{\bar p}_k(\cdot)+{\delta}_k}(B_{1/2})$, for some ${\delta}_k>0$ (see, for instance, Lemma 4.1 in \cite{Fan}).
\end{proof}

\end{section}

\begin{section}{Regularity of the Free Boundary}
\label{sect-regul-fb}

In this section, we first consider nonnegative local minimizers to
the energy functional \eqref{funct-J} and we obtain results on the
regularity of the free boundary  for these functions, which are a
consequence of the results in Section \ref{sect-energy-minim-ac}
and the results  in our work \cite{LW5}.

In addition,  we consider any family $\uep$ of nonnegative local
minimizers to the energy functional \eqref{funct-Jep} which are
uniformly bounded, with $\fep$ and $p_{\ep}$ uniformly bounded
(like, for instance, the one constructed in Theorem
\ref{minimizers} and Remark \ref{rema-signo}). Then (recall Remark
\ref{rema-apply}), all the results in our previous paper
\cite{LW4}  apply to such a family. Hence, as a consequence of the
results in Section \ref{sect-energy-minim-persing} and in our work
\cite{LW5}, we obtain results on the regularity of the free
boundary for limit functions of this family.

\medskip

First, for nonnegative local minimizers to the energy functional
\eqref{funct-J}, we get

\begin{theo}\label{AC=weak}  Assume that
 $1<p_{\min}\le p(x)\le p_{\max}<\infty$ with
$\|\nabla p\|_{L^{\infty}}\leq L$, $f\in L^{\infty}(\Omega)$ and $0<\lone\le\lambda(x)\le\ltwo<\infty$ with $\lambda\in C(\Omega)$.
Let $u\in W^{1,p(\cdot)}(\Omega)\cap L^{\infty}(\Omega)$   be a nonnegative local minimizer of
 \eqref{funct-J} in a domain $\Omega\subset \Bbb R^{N}$.

Then,  $u$ is a weak solution to the free boundary problem:
$u\ge0$ and
\begin{equation}
\tag{$P(f,p,{\lambda}^*)$}
\begin{cases}
\Delta_{p(x)}u = f & \mbox{in }\{u>0\}\\
u=0,\ |\nabla u| = \lambda^*(x) & \mbox{on }\partial\{u>0\}
\end{cases}
\end{equation}
with $\lambda^*(x)=\Big(\frac{p(x)}{p(x)-1}\,\lambda(x)\Big)^{1/p(x)}$.
\end{theo}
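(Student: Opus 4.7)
My plan is to verify that $u$ satisfies each of the four numbered conditions in Definition \ref{weak2}, since every one of them has been essentially established by a dedicated result in Section \ref{sect-energy-minim-ac}. The proof therefore amounts to an assembly of the previously proven statements, with the substantive work having been carried out in preparing them.

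First, for condition (1), nonnegativity and the membership $u\in W^{1,p(\cdot)}(\Omega)\cap L^{\infty}(\Omega)$ are built into the hypothesis, Corollary \ref{loc-holder} yields continuity of $u$ in $\Omega$, and Lemma \ref{lemm-min-ig} gives the equation $\Delta_{p(x)}u=f$ on the open set $\{u>0\}$. For condition (2), Corollary \ref{Lip} furnishes a local Lipschitz bound, so in particular $\nabla u\in L^{\infty}_{\rm loc}(\Omega)$; combined with Theorem \ref{pre-lip} this supplies the upper estimate $\sup_{B_r(x_0)}u\le C_{\max}\,r$ uniformly for $x_0\in\partial\{u>0\}\cap D$ and $r\le r_0(D)$, while the matching nondegeneracy $\sup_{B_r(x_0)}u\ge c_{\min}\,r$ is precisely Theorem \ref{nodegfinal}, whose hypothesis $\nabla u\in L^{\infty}$ is again secured by Corollary \ref{Lip}.

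Condition (3) is the asymptotic development at points of the reduced free boundary, which is exactly Theorem \ref{asymptotic-devel}. Condition (4) splits into its two clauses: the gradient limsup identity at an arbitrary free boundary point $x_0$ is Theorem \ref{limsup-grad}, and the lower bound at points where a ball contained in $\{u=0\}$ touches the free boundary is Theorem \ref{limsup-bola}. In each case the hypotheses of the cited result match those of the present theorem, with the auxiliary Lipschitz regularity furnished by Corollary \ref{Lip} and the continuity of $\lambda$ invoked as assumed.

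The conceptual hard step has already been absorbed upstream: it is the Lipschitz continuity of Theorem \ref{pre-lip}, whose rescaling argument simultaneously handles the inhomogeneity $f\not\equiv 0$ and the singular range $1<p(x)<2$. Once Lipschitz regularity is in hand, the identification of the blow-up slope with $\lambda^*(x_0)=\bigl(\tfrac{p(x_0)}{p(x_0)-1}\,\lambda(x_0)\bigr)^{1/p(x_0)}$ proceeds through the mild-minimizer passage-to-the-limit of Proposition \ref{blow-up-mild-minim} together with the Euler--Lagrange computation of Proposition \ref{identific-alpha}, both of which already underlie Theorems \ref{limsup-grad}, \ref{limsup-bola} and \ref{asymptotic-devel}. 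The present theorem thus reduces to a short bookkeeping of which earlier result discharges which clause in Definition \ref{weak2}.
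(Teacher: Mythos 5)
Your proposal is correct and follows exactly the same route as the paper, which simply lists Lemma~\ref{lemm-min-ig}, Corollary~\ref{Lip} and Theorems~\ref{pre-lip}, \ref{nodegfinal}, \ref{limsup-grad}, \ref{limsup-bola} and \ref{asymptotic-devel} as the ingredients; you have merely spelled out which of these discharges which clause of Definition~\ref{weak2}. The only cosmetic difference is that you additionally invoke Corollary~\ref{loc-holder} for continuity, which is subsumed in Corollary~\ref{Lip} anyway.
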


\begin{proof}
The result follows by applying Lemma  \ref{lemm-min-ig}, Corollary \ref{Lip} and
Theorems \ref{pre-lip}, \, \ref{nodegfinal}, \,  \ref{limsup-grad}, \, \ref{limsup-bola} and \ref{asymptotic-devel}.
\end{proof}

Now, we can apply the results in \cite{LW5} and deduce

\begin{theo}\label{regAC}
Let $p$, $f$, $\lambda$ and $u$ be as
in Theorem \ref{AC=weak}. Assume moreover that $f\in W^{1,q}(\Omega)$, $p\in W^{2,q}(\Omega)$
 with $q>\max\{1,N/2\}$ and $\lambda$ is H\"older continuous in $\Omega$.

Then, there is a subset $\mathcal{R}$  of the free boundary
$\Omega\cap\partial\{u>0\}$
$(\mathcal{R}=\partial_{\rm{red}}\{u>0\})$ which is locally a
$C^{1,\alpha}$ surface, for some $0<\alpha<1$, and  the free
boundary condition is satisfied in the classical sense in a
neighborhood of $\mathcal{R}$. Moreover, $\mathcal{R}$ is open and
dense in $\Omega\cap\partial\{u>0\}$ and the remainder of the free
boundary has $(N-1)-$dimensional Hausdorff measure zero.

If moreover $\nabla p$ and $f$ are  H\"older
continuous in $\Omega$, then the
equation is satisfied in the classical sense in a neighborhood of $\mathcal{R}$.
\end{theo}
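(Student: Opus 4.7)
The plan is to reduce the theorem to the free boundary regularity theory developed in the authors' previous work \cite{LW5} by way of Theorem \ref{AC=weak}. First, I would invoke Theorem \ref{AC=weak} to conclude that every nonnegative local minimizer $u$ of $J$ in $\Omega$ is a weak solution of the free boundary problem $P(f,p,\lambda^*)$ in the sense of Definition \ref{weak2}, with $\lambda^*(x)=\bigl(\tfrac{p(x)}{p(x)-1}\lambda(x)\bigr)^{1/p(x)}$. This transfers the problem from a minimization setting into the weak solution framework in which \cite{LW5} operates.

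Next, I would check that the strengthened hypotheses on the data translate into the hypotheses required for the regularity theory of \cite{LW5}. Since $\lone\le\lambda\le\ltwo$ is H\"older continuous in $\Omega$, $p\in W^{2,q}(\Omega)$ with $q>\max\{1,N/2\}$ (hence in particular $p\in C^{0,\sigma}(\Omega)$ for some $\sigma>0$ by Sobolev embedding) and $1<p_{\min}\le p(x)\le p_{\max}$, the map $(s,t)\mapsto \bigl(\tfrac{s}{s-1}\,t\bigr)^{1/s}$ being smooth on $(1,\infty)\times[\lone,\ltwo]$ implies that $\lambda^*$ is H\"older continuous in $\Omega$ with $0<\lone^*\le \lambda^*(x)\le\ltwo^*<\infty$. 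Together with $f\in W^{1,q}(\Omega)$ and $p\in W^{2,q}(\Omega)$, these are exactly the standing hypotheses of the free boundary regularity theorems in \cite{LW5}.

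Then I would apply the main structural theorem of \cite{LW5} to the weak solution $u$ obtained in the first step. That result yields precisely the conclusion sought: setting $\mathcal{R}=\partial_{\rm red}\{u>0\}$, the reduced free boundary is locally a $C^{1,\alpha}$ surface for some $0<\alpha<1$, the free boundary condition $|\nabla u|=\lambda^*$ holds in the classical sense in a neighborhood of $\mathcal{R}$, $\mathcal{R}$ is open in $\Omega\cap\partial\{u>0\}$, and the singular set $\Omega\cap\partial\{u>0\}\setminus\mathcal{R}$ has zero $\mathcal{H}^{N-1}$-measure. The density of $\mathcal{R}$ in $\Omega\cap\partial\{u>0\}$ is a consequence of the nondegeneracy (Theorem \ref{nodegfinal}) and positive density (Theorem \ref{densprop}) established in Section \ref{sect-energy-minim-ac}, which force every free boundary point to be approachable by reduced-boundary points. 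Finally, under the extra assumption that $\nabla p$ and $f$ are H\"older continuous, a standard bootstrap using the $C^{1,\alpha}$ flatness of $\mathcal{R}$ and Schauder-type estimates for the $p(x)$-Laplacian (as carried out in \cite{LW5}) upgrades the PDE $\Delta_{p(x)}u=f$ in $\{u>0\}$ to being attained classically up to $\mathcal{R}$.

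The only delicate point at this stage is the compatibility of the regularity assumptions between Theorem \ref{AC=weak} and \cite{LW5}; the substantive work—blow-up analysis, flatness implies $C^{1,\alpha}$, Hausdorff measure estimate on the singular set—is entirely contained in \cite{LW5} and in the uniform estimates of Section \ref{sect-energy-minim-ac}, so the present proof is essentially a verification that the hypotheses match.
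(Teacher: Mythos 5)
Your proposal follows essentially the same route as the paper: invoke Theorem \ref{AC=weak} to place $u$ in the weak-solution framework, check that the hypotheses on $p$, $f$, $\lambda$ (hence on $\lambda^*$) match those of \cite{LW5}, and then read off the free boundary regularity from the theorems there. One small imprecision: you attribute the $\mathcal{H}^{N-1}$-nullity of the singular set wholesale to "the main structural theorem of \cite{LW5}," whereas the paper derives it by combining Theorem 2.1 and Lemma 2.3 of \cite{LW5} with the positive density estimate of Theorem \ref{densprop} (a property of minimizers proved here, not of general weak solutions) and Federer's Theorem 4.5.6(3); you do cite Theorem \ref{densprop}, but in service of topological density rather than the measure estimate. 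This is a matter of bookkeeping rather than a gap in the argument.
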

\begin{proof}
We first observe that, by Theorem  \ref{AC=weak},  Theorem 4.4 in
\cite{LW5} applies at every
$x_0\in\Omega\cap\partial_{\rm{red}}\{u>0\}$.

Finally we observe that, since $u$ is a weak solution to
$P(f,p,{\lambda}^*)$, Theorem 2.1 in \cite{LW5} and Lemma 2.3 in
\cite{LW5} apply to $u$. Therefore, recalling Theorem
\ref{densprop} we deduce, from Theorem 4.5.6(3) in \cite{F}, that
${\mathcal H}^{N-1}(\fb\setminus\partial_{\rm{red}}\{u>0\})=0$.
\end{proof}

We also obtain higher regularity from the application of Corollary
4.1 in \cite{LW5}

\begin{coro}\label{high-AC} Let $p$, $f$, $\lambda$ and $u$ be as in Theorem \ref{regAC}.
Assume moreover that $p\in C^2(\Omega)$, $f\in C^1(\Omega)$ and
$\lambda\in C^2(\Omega)$ then $\partial_{\rm{red}}\{u>0\}\in
C^{2,\mu}$ for every  $0<\mu<1$.

If $p\in C^{m+1,\mu}(\Omega)$, $f\in C^{m,\mu}(\Omega)$ and
$\lambda\in C^{m+1,\mu}(\Omega)$ for some $0<\mu<1$ and $m\ge1$,
then $\partial_{\rm{red}}\{u>0\}\in C^{m+2,\mu}$.

Finally, if $p$, $f$ and $\lambda$ are analytic, then
$\partial_{\rm{red}}\{u>0\}$ is analytic.
\end{coro}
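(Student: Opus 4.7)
The plan is to deduce Corollary \ref{high-AC} directly from Corollary 4.1 in \cite{LW5}, which provides the bootstrap higher regularity of the reduced free boundary for weak solutions to $P(f,p,\lambda^*)$. The preparatory work has essentially already been done earlier in the paper: Theorem \ref{AC=weak} ensures that the nonnegative local minimizer $u$ is a weak solution of $P(f,p,\lambda^*)$, while Theorem \ref{regAC} provides both the initial $C^{1,\alpha}$ regularity of $\mathcal{R} = \partial_{\rm red}\{u>0\}$ and the fact that the $p(x)$-Laplace equation and the free boundary condition $|\nabla u|=\lambda^*$ are satisfied in the classical sense in a neighborhood of $\mathcal{R}$. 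These are exactly the running hypotheses needed to enter the machinery of \cite{LW5}.

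The one point that must be checked before invoking Corollary 4.1 of \cite{LW5} is that the free boundary datum $\lambda^*$ inherits the relevant regularity from $p$ and $\lambda$. Writing
\[
\lambda^*(x) = h\bigl(p(x),\lambda(x)\bigr), \qquad h(p,\lambda) = \Bigl(\frac{p}{p-1}\,\lambda\Bigr)^{1/p},
\]
and noting that $h$ is real-analytic in $(p,\lambda)$ on the range $p\ge p_{\min}>1$, $\lambda\ge \lambda_{\min}>0$, a routine chain-rule argument yields $\lambda^*\in C^2(\Omega)$ when $p,\lambda\in C^2(\Omega)$; $\lambda^*\in C^{m+1,\mu}(\Omega)$ when $p,\lambda\in C^{m+1,\mu}(\Omega)$; and $\lambda^*$ analytic when $p$ and $\lambda$ are analytic. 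A similar (and well known) observation shows that the nonlinearity $\xi\mapsto |\xi|^{p(x)-2}\xi$ defining $\Delta_{p(x)}$ inherits the same regularity from $p$.

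With the appropriate regularity of $p$, $f$ and $\lambda^*$ in hand, the three assertions of the corollary follow respectively from the three cases of Corollary 4.1 in \cite{LW5}, applied at each point of $\mathcal{R}$. No further analysis is required beyond that already developed in Section \ref{sect-energy-minim-ac} and in \cite{LW5}; the only (very mild) obstacle is bookkeeping in the analytic case, where one must verify that the composition and fractional power of analytic positive functions remains analytic, which is standard.
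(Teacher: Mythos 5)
Your proposal is correct and follows essentially the same route as the paper, which simply cites Corollary 4.1 of \cite{LW5} (applied to the weak solution produced by Theorem \ref{AC=weak} and to the reduced boundary identified in Theorem \ref{regAC}) without further elaboration. Your added observation that $\lambda^*(x)=\bigl(\tfrac{p(x)}{p(x)-1}\lambda(x)\bigr)^{1/p(x)}$ inherits $C^2$, $C^{m+1,\mu}$, or analytic regularity from $p$ and $\lambda$ via the chain rule (since the defining function is real-analytic on $p\ge p_{\min}>1$, $\lambda\ge\lambda_{\min}>0$) is the one nontrivial bookkeeping step the paper leaves implicit, and you handle it correctly.
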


Next, for minimizers of the energy functional \eqref{funct-Jep} we
obtain, as a consequence of the results in Section
\ref{sect-energy-minim-persing} and the results in \cite{LW4}

\begin{theo}\label{lim=weak} Assume that $1<p_{\min}\le
p_{\epj}(x)\le p_{\max}<\infty$ and $\|\nabla
p_{\epj}\|_{L^{\infty}}\leq L$.
  Let $u^{\ep_j}\in W^{1,\psubepj(\cdot)}(\Omega)$ be a family of nonnegative local minimizers of \eqref{ener-epj}
in a domain $\Omega\subset \Bbb R^{N}$ such that
$u^{\ep_j}\rightarrow u$ uniformly on compact subsets of $\Omega$,
$\fepj\rightharpoonup f$ $*-$weakly in $L^\infty(\Omega)$,
$p_{\epj}\to p$ uniformly on compact subsets of $\Omega$ and
$\ep_j\to 0$.

Then,  $u$ is a weak solution to the free boundary problem:
$u\ge0$ and
\begin{equation}
\tag{$P(f,p,{\lambda}^*)$}
\begin{cases}
\Delta_{p(x)}u = f & \mbox{in }\{u>0\}\\
u=0,\ |\nabla u| = \lambda^*(x) & \mbox{on }\partial\{u>0\}
\end{cases}
\end{equation}
with $\lambda^*(x)=\Big(\frac{p(x)}{p(x)-1}\,M\Big)^{1/p(x)}$ and
$M=\int \beta(s)\, ds$.
\end{theo}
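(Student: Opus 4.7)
The plan is to verify the four defining conditions of a weak solution to \ref{fbp-px} in Definition~\ref{weak2}, mirroring the proof of Theorem~\ref{AC=weak} but replacing its minimizer-based tools from Section~\ref{sect-energy-minim-ac} with their singular-perturbation counterparts from Section~\ref{sect-energy-minim-persing}, together with the convergence results of \cite{LW4} that apply to the family $\uepj$ via Remark~\ref{rema-apply}.

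For condition (1), I would invoke Theorem~\ref{limit-minim} to obtain that $u$ is locally Lipschitz in $\Omega$, so $u\in W^{1,p(\cdot)}_{\rm loc}(\Omega)$, and $u$ is continuous and nonnegative by uniform convergence. To obtain $\Delta_{p(x)}u = f$ in $\{u>0\}$, I would fix $x_0$ with $u(x_0)>0$ and use uniform convergence to conclude that $\uepj(x)>\epj$ on a small ball around $x_0$ for large $j$, whence $\beta_{\epj}(\uepj)\equiv 0$ there. Passing to the limit in $\Delta_{p_{\epj}(x)}\uepj=\beta_{\epj}(\uepj)+\fepj$ via the results on passage to the limit in Lemma~3.1 of \cite{LW4} (available by Remark~\ref{rema-apply}) then yields the PDE. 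Condition~(2) is immediate: the upper bound comes from the local Lipschitz bound, and the lower bound is precisely Theorem~\ref{nodegfinal-sing}.

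The heart of the argument is conditions (3)--(5), describing the asymptotic behavior of $u$ near the free boundary. The key observation is that part (i) of Theorem~\ref{limit-minim} says that $u$ is a \emph{mild minimizer} of the limit functional \eqref{energy-funct} in the sense of Definition~\ref{mild-minim}, with $\lambda(x)\equiv M$. Consequently, Propositions~\ref{blow-up-mild-minim} and~\ref{identific-alpha} apply directly: any blow-up $u_0$ of $u$ at a free-boundary point $x_0$ is again a nonnegative Lipschitz mild minimizer, now with the constant exponent $p(x_0)$ and volume coefficient $M$; and any half-plane blow-up $\alpha x_1^+$ of $u_0$ has slope $\alpha=(p(x_0)M/(p(x_0)-1))^{1/p(x_0)}=\lambda^*(x_0)$. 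With these two propositions in hand, the proofs of Theorems~\ref{limsup-grad}, \ref{limsup-bola} and~\ref{asymptotic-devel} go through verbatim for $u$, provided that Lemma~\ref{lemm-min-ig}, Theorem~\ref{nodegfinal} and Theorem~\ref{densprop} are replaced by the equation just established, Theorem~\ref{nodegfinal-sing} and Theorem~\ref{densprop-sing}, respectively. This delivers the gradient limsup identity, the ball condition, and the asymptotic development with the correct coefficient $\lambda^*(x_0)$.

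The main obstacle is precisely securing the mild-minimality of the limit $u$. A naive strong local-minimizer property for $u$ is not available, since the admissible Sobolev class $W^{1,p_{\epj}(\cdot)}$ varies with $j$ and does not behave well under weak convergence when $p_{\epj}\to p$. The refined gluing construction in Theorem~\ref{limit-minim}(i), based on test functions in the slightly better class $W^{1,p(\cdot)+\delta}$ together with an annular interpolation $v_{h,j}$ between the candidate competitor $v$ and $\uepj$, bypasses this issue; the constant $M=\int\beta(s)\,ds$ emerges as the lower semicontinuous limit of the volume term $B_{\epj}(\uepj)$. This mild-minimality is the essential ingredient that allows the blow-up machinery of Section~\ref{sect-energy-minim-ac} to be transferred to the singular-perturbation setting and thereby close the proof.
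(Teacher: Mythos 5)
Your proof is correct, but it takes a genuinely different route from the paper's. The paper's proof of Theorem \ref{lim=weak} is extremely terse: it applies Remark \ref{rema-apply} (so that all the machinery of \cite{LW4} is available for the family $\uepj$, which solve $P_{\epj}(\fepj,p_{\epj})$), then invokes Theorems \ref{nodegfinal-sing} and \ref{densprop-sing} to secure the nondegeneracy and density hypotheses, and finally cites Theorem 6.1 of \cite{LW4} as the black box that delivers the weak-solution conclusion. You instead avoid the external Theorem 6.1 of \cite{LW4} altogether: you directly verify conditions (1)--(4) of Definition \ref{weak2}, using Theorem \ref{limit-minim}(i) to obtain the mild minimality of the limit $u$ (with $\lambda\equiv M$, constant exponent not required), and then transport the blow-up analysis of Theorems \ref{limsup-grad}, \ref{limsup-bola} and \ref{asymptotic-devel} to the limit $u$, replacing the ingredients from the local-minimizer setting (Lemma \ref{lemm-min-ig}, Theorems \ref{nodegfinal}, \ref{densprop}) by their singular-perturbation counterparts (the equation obtained through passage to the limit, and Theorems \ref{nodegfinal-sing}, \ref{densprop-sing}).

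The key structural difference is in how the constant $\lambda^*$ is identified. Your argument identifies it via the mild-minimizer variational machinery: Propositions \ref{blow-up-mild-minim} and \ref{identific-alpha} apply a Hadamard domain variation to the blown-up half-plane profile. The paper, by routing through Theorem 6.1 of \cite{LW4}, likely extracts $\lambda^*$ from the PDE/measure structure of the limit equation $\Delta_{p(x)}u=\mu+f\chi_{\{u>0\}}$, with $\mu$ the weak-$*$ limit of $\beta_{\epj}(\uepj)$. Both mechanisms produce the same constant $\big(\tfrac{p(x_0)}{p(x_0)-1}M\big)^{1/p(x_0)}$, but the paper's is an external, PDE-theoretic argument, whereas yours is an internal, variational argument exploiting the mild-minimality that Theorem \ref{limit-minim}(i) establishes. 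Your route is more self-contained within this paper and arguably more transparent; its cost is re-deriving what the paper packs into a single citation. Two small points worth being careful about in a write-up: one should confirm that, in transferring Theorem \ref{asymptotic-devel}, the use of Remark \ref{equiv-nondeg} is justified by the distributional inequality $\Delta_{p(x)}u\ge f$ that survives the limit (which is what the Caccioppoli argument behind the equivalence actually needs), and that in Proposition \ref{blow-up-mild-minim} the hypothesis $\lambda\in C(\Omega)$ is trivially met since $\lambda\equiv M$ is constant.
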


\begin{proof}
The result follows by applying first  Remark \ref{rema-apply} and
Theorems \ref{nodegfinal-sing} and \ref{densprop-sing} and then,
Theorem 6.1 in \cite{LW4}.
\end{proof}

We can now apply   the results in \cite{LW5} and deduce

\begin{theo}\label{regminim} Let $p_{\epj}$, $\fepj$, $\uepj$,  $\ep_j$, $p$,  $f$ and $u$   be as
in Theorem \ref{lim=weak}. Assume moreover that $f\in
W^{1,q}(\Omega)$ and $p\in W^{2,q}(\Omega)$
 with $q>\max\{1,N/2\}$.

Then, there is a subset $\mathcal{R}$  of the free boundary
$\Omega\cap\partial\{u>0\}$
$(\mathcal{R}=\partial_{\rm{red}}\{u>0\})$ which is locally a
$C^{1,\alpha}$ surface, for some $0<\alpha<1$, and  the free
boundary condition is satisfied in the classical sense in a
neighborhood of $\mathcal{R}$. Moreover, $\mathcal{R}$ is open and
dense in $\Omega\cap\partial\{u>0\}$ and the remainder of the free
boundary has $(N-1)-$dimensional Hausdorff measure zero.

If moreover $\nabla p$ and $f$ are  H\"older continuous in
$\Omega$, then the equation is satisfied in the classical sense in
a neighborhood of $\mathcal{R}$.
\end{theo}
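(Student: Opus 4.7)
The strategy mirrors that of Theorem \ref{regAC}, with Theorem \ref{AC=weak} replaced by Theorem \ref{lim=weak}. The essential work has already been done in establishing that the limit function $u$ is a weak solution of $P(f,p,\lambda^*)$ in the sense of Definition \ref{weak2}, with $\lambda^*(x)=\Big(\frac{p(x)}{p(x)-1}\,M\Big)^{1/p(x)}$. Thus, the plan is to feed this weak solution into the regularity machinery developed in \cite{LW5} and collect the conclusions; the regularity theorem in that paper does not ``see'' whether $u$ came from a stationary minimization problem or from a singular perturbation, only that the structural properties (1)--(4) of Definition \ref{weak2} hold.

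First, at each point $x_0\in\partial_{\rm{red}}\{u>0\}$, the assumptions $f\in W^{1,q}(\Omega)$, $p\in W^{2,q}(\Omega)$ with $q>\max\{1,N/2\}$ and the weak solution property of $u$ are exactly those under which Theorem 4.4 in \cite{LW5} applies. That theorem produces a neighborhood of $x_0$ in which $\partial_{\rm{red}}\{u>0\}$ is a $C^{1,\alpha}$ surface (for some $0<\alpha<1$) and the free boundary condition $|\nabla u|=\lambda^*(x)$ holds in the classical sense. This gives the main regularity assertion about $\mathcal{R}$ and shows $\mathcal{R}$ is open in $\Omega\cap\partial\{u>0\}$.

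Second, to treat the non-reduced part of the free boundary, we use that $u$ is a weak solution (so Theorem 2.1 and Lemma 2.3 of \cite{LW5} apply, giving that $u$ has locally finite perimeter and the needed density/flatness information) together with the uniform density estimate for $\{u>0\}$ from Theorem \ref{densprop-sing}. These hypotheses are exactly those of Theorem 4.5.6(3) in \cite{F}, which yields
$$\mathcal{H}^{N-1}\bigl(\Omega\cap\partial\{u>0\}\setminus\partial_{\rm{red}}\{u>0\}\bigr)=0.$$
Combined with the openness of $\mathcal{R}$ this forces density of $\mathcal{R}$ in $\Omega\cap\partial\{u>0\}$. For the last assertion, if moreover $\nabla p$ and $f$ are H\"older continuous, classical interior regularity for the $p(x)$-Laplacian (as invoked already in Corollary \ref{high-AC}, via \cite{Fan}) upgrades $u$ to a classical solution of $\Delta_{p(x)}u=f$ in a neighborhood of $\mathcal{R}$ inside $\{u>0\}$.

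The proof is therefore almost entirely a bookkeeping of hypotheses. The main (and only genuinely non-routine) obstacle is making sure all properties required by the external results of \cite{LW5} and \cite{F} are truly in hand for the singular perturbation limit; the critical point where this could fail is the uniform nondegeneracy/density of $\{u>0\}$ near the free boundary, but this is precisely what Theorems \ref{nodegfinal-sing} and \ref{densprop-sing} were designed to supply, so the application of the black boxes goes through without further work.
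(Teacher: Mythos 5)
Your proposal is correct and follows essentially the same route as the paper: invoke Theorem 4.4 of \cite{LW5} at reduced boundary points (available because Theorem \ref{lim=weak} supplies the weak solution property), then combine Theorem 2.1 and Lemma 2.3 of \cite{LW5} with the density estimate from Theorem \ref{densprop-sing} and Theorem 4.5.6(3) of \cite{F} to conclude that the non-reduced part has $\mathcal{H}^{N-1}$-measure zero. The paper's proof is essentially this bookkeeping, so no further comment is needed.
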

\begin{proof}
We first observe that, by Theorem  \ref{lim=weak},  Theorem 4.4 in
\cite{LW5} applies at every
$x_0\in\Omega\cap\partial_{\rm{red}}\{u>0\}$.

Finally we observe that, since $u$ is a weak solution to
$P(f,p,{\lambda}^*)$, Theorem 2.1 in \cite{LW5} and Lemma 2.3 in
\cite{LW5} apply to $u$. Therefore, recalling Theorem
\ref{densprop-sing} we deduce, from Theorem 4.5.6(3) in \cite{F},
that ${\mathcal
H}^{N-1}(\fb\setminus\partial_{\rm{red}}\{u>0\})=0$.
\end{proof}

We also obtain higher regularity from the application of Corollary
4.1 in \cite{LW5}

\begin{coro}\label{high-persing} Let $p$, $f$ and $u$ be as in Theorem \ref{regminim}.
Assume moreover that $p\in C^2(\Omega)$ and $f\in C^1(\Omega)$,
then $\partial_{\rm{red}}\{u>0\}\in C^{2,\mu}$ for every
$0<\mu<1$.

If $p\in C^{m+1,\mu}(\Omega)$ and $f\in C^{m,\mu}(\Omega)$ for
some $0<\mu<1$ and $m\ge1$, then $\partial_{\rm{red}}\{u>0\}\in
C^{m+2,\mu}$.

Finally, if $p$ and $f$ are analytic, then
$\partial_{\rm{red}}\{u>0\}$ is analytic.
\end{coro}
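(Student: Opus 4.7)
The plan is to invoke Corollary 4.1 in \cite{LW5}, which provides a bootstrap higher regularity result for weak solutions of $P(f,p,\lambda^*)$ whose reduced free boundary is already locally a $C^{1,\alpha}$ hypersurface. By Theorem \ref{regminim}, the function $u$ is such a weak solution, the set $\mathcal{R}=\partial_{\rm{red}}\{u>0\}$ is locally a $C^{1,\alpha}$ surface, and both the equation $\Delta_{p(x)}u=f$ and the free boundary condition $|\nabla u|=\lambda^*(x)$ hold in the classical sense in a neighborhood of $\mathcal{R}$. This is exactly the starting point required by the cited corollary.

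First I would verify that the regularity hypotheses on the data translate to the coefficient $\lambda^*$. Here
\[
\lambda^*(x)=\Big(\frac{p(x)}{p(x)-1}\,M\Big)^{1/p(x)},
\]
with $M=\int\beta(s)\,ds>0$ a fixed constant and $p(x)\ge p_{\min}>1$. Thus $\lambda^*$ is obtained from $p$ by composition with a function that is real analytic on $(1,\infty)$. Consequently: $p\in C^2(\Omega)$ yields $\lambda^*\in C^2(\Omega)$; $p\in C^{m+1,\mu}(\Omega)$ yields $\lambda^*\in C^{m+1,\mu}(\Omega)$; and if $p$ is analytic then so is $\lambda^*$. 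Hence the assumptions of Corollary 4.1 of \cite{LW5} on the triple $(p,f,\lambda^*)$ are met in each of the three regularity regimes of the present statement.

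With this verification in place, I would simply apply Corollary 4.1 of \cite{LW5} in each case to conclude the claimed regularity of $\mathcal{R}$: the first hypothesis gives $\mathcal{R}\in C^{2,\mu}$ for every $0<\mu<1$; the second gives $\mathcal{R}\in C^{m+2,\mu}$; and the third yields that $\mathcal{R}$ is analytic.

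The main (in fact, the only) obstacle is the routine bookkeeping needed to match the regularity hypotheses of Corollary 4.1 in \cite{LW5} with what we have for $(p,f,\lambda^*)$, in particular checking that $\lambda^*$ inherits the correct smoothness class from $p$. Once this is done, no further analytic work is required, since the successive bootstrap via Schauder estimates for the quasilinear equation plus the oblique free boundary condition is entirely encapsulated in the cited corollary.
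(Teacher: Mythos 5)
Your argument is correct and coincides with the paper's approach: the paper itself proves this corollary simply by invoking Corollary 4.1 of \cite{LW5}, applied to the weak solution and the regularity provided by Theorem \ref{regminim}. The explicit observation that $\lambda^*(x)=\big(\tfrac{p(x)}{p(x)-1}M\big)^{1/p(x)}$ inherits the smoothness class of $p$ (since it is an analytic function of $p$ on $(1,\infty)$) is exactly the check left implicit in the paper and is what replaces the separate hypothesis on $\lambda$ that appears in the companion Corollary \ref{high-AC}.
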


\medskip

\end{section}

\appendix

\section{} \label{appA1}

\setcounter{equation}{0}

In Section 1 we included some preliminaries on Lebesgue and Sobolev spaces with variable exponent. For the sake of completeness we collect here some
additional
results on these spaces as well as some other results that are used throughout the paper.

\bigskip
\begin{prop}\label{equi}
There holds
\begin{align*}
\min\Big\{\Big(\int_{\Omega} |u|^{p(x)}\, dx\Big)
^{1/{p_{\min}}},& \Big(\int_{\Omega} |u|^{p(x)}\, dx\Big)
^{1/{p_{\max}}}\Big\}\le\|u\|_{L^{p(\cdot)}(\Omega)}\\
 &\leq  \max\Big\{\Big(\int_{\Omega} |u|^{p(x)}\, dx\Big)
^{1/{p_{\min}}}, \Big(\int_{\Omega} |u|^{p(x)}\, dx\Big)
^{1/{p_{\max}}}\Big\}.
\end{align*}
\end{prop}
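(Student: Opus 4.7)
The plan is to prove both inequalities by a case analysis driven by whether the modular $\varrho(u):=\int_\Omega |u(x)|^{p(x)}\,dx$ is $\ge 1$ or $\le 1$, combined with the elementary pointwise estimates
\begin{equation*}
\lambda^{p_{\min}}\le \lambda^{p(x)}\le \lambda^{p_{\max}} \quad\text{if } \lambda\ge 1,\qquad
\lambda^{p_{\max}}\le \lambda^{p(x)}\le \lambda^{p_{\min}} \quad\text{if } 0<\lambda\le 1,
\end{equation*}
which hold for every $x\in\Omega$ because $p_{\min}\le p(x)\le p_{\max}$. The degenerate case $u\equiv 0$ is immediate, so I assume $u\not\equiv 0$ throughout.

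For the upper bound, when $\varrho(u)\ge 1$ I take $\lambda_0=\varrho(u)^{1/p_{\min}}\ge 1$; then $\lambda_0^{p(x)}\ge \lambda_0^{p_{\min}}$ gives
\begin{equation*}
\varrho(u/\lambda_0)=\int_\Omega \frac{|u|^{p(x)}}{\lambda_0^{p(x)}}\,dx\le \frac{\varrho(u)}{\lambda_0^{p_{\min}}}=1,
\end{equation*}
and by definition of the Luxemburg norm $\|u\|_{p(\cdot)}\le \lambda_0$; since $\varrho(u)\ge 1$, this $\lambda_0$ is precisely $\max\{\varrho(u)^{1/p_{\min}},\varrho(u)^{1/p_{\max}}\}$. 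Symmetrically, when $0<\varrho(u)\le 1$ I take $\lambda_0=\varrho(u)^{1/p_{\max}}\le 1$ and use $\lambda_0^{p(x)}\ge \lambda_0^{p_{\max}}$ to again obtain $\varrho(u/\lambda_0)\le 1$, so $\|u\|_{p(\cdot)}\le \lambda_0$, which in this regime equals the same maximum.

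For the lower bound, the key step (and the main obstacle) is to establish that $\varrho\bigl(u/\|u\|_{p(\cdot)}\bigr)\le 1$, i.e. that the infimum defining the Luxemburg norm is attained. I obtain this from the monotone convergence theorem: pick any sequence $\lambda_n\searrow \|u\|_{p(\cdot)}$ with $\varrho(u/\lambda_n)\le 1$; the integrands $|u(x)|^{p(x)}/\lambda_n^{p(x)}$ form an increasing sequence in $n$, so passing to the limit yields $\varrho(u/\|u\|_{p(\cdot)})\le 1$. Writing $\Lambda:=\|u\|_{p(\cdot)}$, I split cases: if $\Lambda\ge 1$ then $\Lambda^{p(x)}\le \Lambda^{p_{\max}}$ gives $\varrho(u)/\Lambda^{p_{\max}}\le \varrho(u/\Lambda)\le 1$, hence $\Lambda\ge \varrho(u)^{1/p_{\max}}$; if $\Lambda\le 1$ the analogous estimate yields $\Lambda\ge \varrho(u)^{1/p_{\min}}$. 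A brief check using the monotonicity of $t\mapsto \varrho(u)^{1/t}$ (in both the regimes $\varrho(u)\gtrless 1$) shows that in either case $\Lambda\ge \min\{\varrho(u)^{1/p_{\min}},\varrho(u)^{1/p_{\max}}\}$, completing the proof. Aside from the attainment step, the argument is just bookkeeping of the sign of $\log\lambda$.
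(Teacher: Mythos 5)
The paper does not actually prove Proposition \ref{equi}: it is stated in the Appendix among background facts on variable exponent spaces, with the reader referred to the standard references (\cite{DHHR}, \cite{KR}, \cite{RaRe}, \cite{HH}) for proofs. So there is no internal argument to compare against. Your argument, however, is the standard and correct one. The two-case modular estimate, driven by whether $\varrho(u)\gtrless 1$ and by the pointwise monotonicity of $\lambda\mapsto\lambda^{p(x)}$ in the two regimes $\lambda\gtrless 1$, gives the upper bound cleanly. For the lower bound, the key point you correctly isolate is that the Luxemburg infimum is attained (equivalently, $\varrho(u/\|u\|_{p(\cdot)})\le 1$), and monotone convergence along $\lambda_n\searrow\|u\|_{p(\cdot)}$ is a legitimate way to get it; this step does require $\|u\|_{p(\cdot)}>0$, which you ensure by excluding $u\equiv 0$ up front. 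The concluding observation that whichever bound the case analysis produces automatically dominates $\min\{\varrho(u)^{1/p_{\min}},\varrho(u)^{1/p_{\max}}\}$ is right, and in fact slightly more than what you need, since each of the two cases already lands on the correct member of the pair once one tracks the sign of $\log\varrho(u)$. No gaps.
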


\bigskip

Some important results for these spaces are

\begin{theo}\label{ref}
Let $p'(x)$ such that $$\frac{1}{p(x)}+\frac{1}{p'(x)}=1.$$ Then
$L^{p'(\cdot)}(\Omega)$ is the dual of $L^{p(\cdot)}(\Omega)$.
Moreover, if $p_{\min}>1$, $L^{p(\cdot)}(\Omega)$ and
$W^{1,p(\cdot)}(\Omega)$ are reflexive.
\end{theo}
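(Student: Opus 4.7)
The plan is to handle the two statements in sequence: first establish the duality $L^{p'(\cdot)}(\Omega)\cong (L^{p(\cdot)}(\Omega))^*$ via the natural pairing, then deduce reflexivity of $L^{p(\cdot)}(\Omega)$ from uniform convexity when $p_{\min}>1$, and finally transfer reflexivity to $W^{1,p(\cdot)}(\Omega)$ via a standard isometric embedding into a product of reflexive spaces.

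For the duality, I would first use H\"older's inequality (Theorem \ref{holder}) to show that every $g\in L^{p'(\cdot)}(\Omega)$ induces a bounded linear functional $T_g(u)=\int_\Omega ug\,dx$ on $L^{p(\cdot)}(\Omega)$, and that $g\mapsto T_g$ is injective with $\|T_g\|\le 2\|g\|_{p'(\cdot)}$. For surjectivity, given $T\in (L^{p(\cdot)}(\Omega))^*$, I would define the set function $\mu(E)=T(\chi_E)$ on measurable subsets $E\subset\Omega$ of finite measure (note $\chi_E\in L^{p(\cdot)}$ since $p_{\max}<\infty$), check that $\mu$ is countably additive and absolutely continuous with respect to Lebesgue measure, and invoke the Radon--Nikodym theorem to obtain a locally integrable density $g$ with $T(\chi_E)=\int_E g\,dx$. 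Because $p_{\max}<\infty$, bounded simple functions supported on sets of finite measure are dense in $L^{p(\cdot)}(\Omega)$, so the identity $T(u)=\int_\Omega ug\,dx$ extends by continuity to the whole space.

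The delicate step is verifying $g\in L^{p'(\cdot)}(\Omega)$. The natural strategy is to test $T$ against truncations $u_n=\mathrm{sgn}(g)\,|g|^{p'(x)-1}\chi_{\{|g|\le n\}\cap\Omega_n}$ where $\Omega_n$ exhausts $\Omega$, which gives $\int |g|^{p'(x)}\chi_{\{|g|\le n\}\cap\Omega_n}\,dx=T(u_n)\le \|T\|\,\|u_n\|_{p(\cdot)}$. By Proposition \ref{equi} one controls $\|u_n\|_{p(\cdot)}$ in terms of $\int |g|^{p'(x)}\chi_{\{|g|\le n\}\cap\Omega_n}\,dx$ (using that $(p'(x)-1)p(x)=p'(x)$), and rearranging yields a uniform modular bound on $g$; monotone convergence and Proposition \ref{equi} then give $\|g\|_{p'(\cdot)}\le C\|T\|$. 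This modular bookkeeping, needing the equivalence between norm and modular that fails to be a simple power, is the main technical obstacle.

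For reflexivity when $p_{\min}>1$, I would establish uniform convexity of $L^{p(\cdot)}(\Omega)$ via variable-exponent Clarkson-type inequalities: the pointwise estimate
\begin{equation*}
\Big|\tfrac{a+b}{2}\Big|^{p(x)}+c\Big|\tfrac{a-b}{2}\Big|^{p(x)}\le \tfrac{1}{2}\bigl(|a|^{p(x)}+|b|^{p(x)}\bigr),
\end{equation*}
with $c=c(p_{\min},p_{\max})>0$, integrates to a modular uniform convexity, which via Proposition \ref{equi} converts into genuine uniform convexity of the Luxemburg norm. Milman--Pettis then gives reflexivity of $L^{p(\cdot)}(\Omega)$. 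For $W^{1,p(\cdot)}(\Omega)$, the map $u\mapsto(u,\nabla u)$ is a linear isometry onto a closed subspace of $L^{p(\cdot)}(\Omega)\times [L^{p(\cdot)}(\Omega)]^N$; the product of finitely many reflexive spaces is reflexive, and closed subspaces of reflexive spaces are reflexive, so the conclusion follows.
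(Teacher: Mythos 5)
The paper itself offers no proof of this theorem: it is collected in the appendix and disposed of with a citation to the literature (DHHR, Kov\'a\v{c}ik--R\'akosn\'{\i}k, etc.), so there is no in-text argument to compare against. Your duality argument is the standard one and is essentially sound under the paper's standing assumptions $1<p_{\min}\le p_{\max}<\infty$: boundedness of $T_g$ from H\"older, $\chi_E\in L^{p(\cdot)}$ since $p_{\max}<\infty$, Radon--Nikodym to produce $g$, and the truncation $u_n=\mathrm{sgn}(g)|g|^{p'(x)-1}\chi_{\{|g|\le n\}\cap\Omega_n}$ (for which you do need $p_{\min}>1$, since $p'(x)-1$ must be finite) combined with the modular--norm equivalence to bound $\|g\|_{p'(\cdot)}$ by $\|T\|$. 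One should be slightly careful about the order of steps — establish the modular bound on $g$ using simple approximants of $u_n$ \emph{before} asserting $T(u)=\int ug$ for general $u$ — but this is a matter of bookkeeping, not a gap.

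The reflexivity argument, however, contains a false inequality. The pointwise Clarkson-type bound
\begin{equation*}
\Big|\tfrac{a+b}{2}\Big|^{p}+c\Big|\tfrac{a-b}{2}\Big|^{p}\le \tfrac{1}{2}\big(|a|^{p}+|b|^{p}\big),\qquad c>0,
\end{equation*}
holds only for $p\ge 2$; it fails for every $p\in(1,2)$. Indeed, take $a=1+t$, $b=1-t$ with $t\to 0^{+}$: the left-hand side is $1+c\,t^{p}$ while the right-hand side is $1+\tfrac{p(p-1)}{2}t^{2}+O(t^{3})$, and $t^{p}\gg t^{2}$ as $t\to 0$ when $p<2$, so no positive $c$ works. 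Since the paper's hypotheses allow (and the authors explicitly emphasize interest in) the range $1<p(x)<2$, this breaks the uniform-convexity route as written. The variable-exponent uniform convexity of $L^{p(\cdot)}$ is in fact true, but its proof requires splitting $\Omega$ into $\{p<2\}$ and $\{p\ge 2\}$ and using the ``second'' Clarkson inequality (which mixes exponents $p$ and $p'$) on the first piece; it does not integrate as cleanly as you suggest. The simplest repair, and the one that fits your own development best, is to drop uniform convexity entirely and deduce reflexivity by iterating the duality you have already established: if $p_{\min}>1$ and $p_{\max}<\infty$ then both $p$ and $p'$ are bounded, so applying your duality identification twice gives $(L^{p(\cdot)})^{**}\cong(L^{p'(\cdot)})^{*}\cong L^{p(\cdot)}$ via the canonical embedding, which is reflexivity. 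Your transfer of reflexivity to $W^{1,p(\cdot)}(\Omega)$ via the isometric embedding $u\mapsto(u,\nabla u)$ into $L^{p(\cdot)}(\Omega)\times[L^{p(\cdot)}(\Omega)]^{N}$ is correct and standard.
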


\begin{theo}\label{imb}
Let $q(x)\leq p(x)$. If $\Omega$ has finite measure, then
 $L^{p(\cdot)}(\Omega)\hookrightarrow L^{q(\cdot)}(\Omega)$
continuously.
\end{theo}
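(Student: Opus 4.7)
The plan is to combine the elementary pointwise inequality $a^{q(x)}\le 1+a^{p(x)}$ (for $a\ge 0$, using $1\le q(x)\le p(x)$) with the Luxemburg characterization $\|u\|_{L^{p(\cdot)}(\Omega)}\le \lambda \Longleftrightarrow \varrho_{p(\cdot)}(u/\lambda)\le 1$. The pointwise inequality is immediate: if $a\le 1$ then $a^{q(x)}\le 1$, while if $a>1$ then $q(x)\le p(x)$ gives $a^{q(x)}\le a^{p(x)}$.

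Fix $u\in L^{p(\cdot)}(\Omega)$; the case $u\equiv 0$ is trivial, so assume $\|u\|_{L^{p(\cdot)}}>0$. For any $\lambda>\|u\|_{L^{p(\cdot)}}$, applying the pointwise inequality with $a=|u(x)|/\lambda$ and integrating (crucially using $|\Omega|<\infty$) yields
\begin{equation*}
\int_\Omega \Big(\frac{|u|}{\lambda}\Big)^{q(x)}\,dx \;\le\; |\Omega| + \int_\Omega \Big(\frac{|u|}{\lambda}\Big)^{p(x)}\,dx \;\le\; |\Omega| + 1.
\end{equation*}
Setting $\tilde\lambda:=K\lambda$ with $K:=(1+|\Omega|)^{1/q_{\min}}\ge 1$, and using $q(x)\ge q_{\min}$ together with $K\ge 1$, I would compute
\begin{equation*}
\varrho_{q(\cdot)}(u/\tilde\lambda) \;=\; \int_\Omega K^{-q(x)}\Big(\frac{|u|}{\lambda}\Big)^{q(x)}\,dx \;\le\; K^{-q_{\min}}(1+|\Omega|) \;=\; 1,
\end{equation*}
so by the Luxemburg characterization $\|u\|_{L^{q(\cdot)}}\le K\lambda$. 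Letting $\lambda\downarrow\|u\|_{L^{p(\cdot)}}$ gives $\|u\|_{L^{q(\cdot)}(\Omega)}\le K\,\|u\|_{L^{p(\cdot)}(\Omega)}$, which is simultaneously the inclusion and its continuity.

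There is no genuine obstacle here; the only delicate point is the choice of rescaling factor $K=(1+|\Omega|)^{1/q_{\min}}$, chosen precisely to absorb the constant $|\Omega|+1$ produced by integrating the ``$1$'' in the pointwise inequality, and this is exactly where the finiteness hypothesis on $|\Omega|$ is used. The rest is bookkeeping between the modular $\varrho_{p(\cdot)}$ and the Luxemburg norm.
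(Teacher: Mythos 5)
Your proof is correct, and it is essentially the standard argument: bound the $L^{q(\cdot)}$ modular of $u/\lambda$ by $|\Omega|$ plus the $L^{p(\cdot)}$ modular via the pointwise inequality $a^{q(x)}\le 1+a^{p(x)}$, then absorb the additive constant by rescaling $\lambda$ by $K=(1+|\Omega|)^{1/q_{\min}}$. The paper itself does not prove this theorem but defers to the cited references (\cite{DHHR}, \cite{KR}, etc.), where precisely this argument appears, so your proof fills the gap faithfully and correctly; the only minor remark is that you tacitly use $q_{\min}>0$ (guaranteed since exponents here are $\ge 1$) to get $K\ge 1$.
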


We also have the following H\"older's inequality

\begin{theo} \label{holder}
Let $p'(x)$ be as in Theorem \ref{ref}. Then there holds
$$
\int_{\Omega}|f||g|\,dx \le 2\|f\|_{p(\cdot)}\|g\|_{p'(\cdot)},
$$
for all $f\in L^{p(\cdot)}(\Omega)$ and $g\in L^{p'(\cdot)}(\Omega)$.
\end{theo}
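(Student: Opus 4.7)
The plan is to reduce the inequality to its homogeneous form via the standard normalization trick and then apply the pointwise Young inequality. Assume without loss of generality that $\|f\|_{p(\cdot)} > 0$ and $\|g\|_{p'(\cdot)} > 0$ (otherwise $f$ or $g$ is zero a.e.\ and the bound is trivial). Set
$$
\tilde f(x) = \frac{f(x)}{\|f\|_{p(\cdot)}}, \qquad \tilde g(x) = \frac{g(x)}{\|g\|_{p'(\cdot)}},
$$
so that, by the very definition of the Luxemburg norm together with Fatou's lemma (to pass to the infimizing $\lambda$), one has $\varrho_{p(\cdot)}(\tilde f) \le 1$ and $\varrho_{p'(\cdot)}(\tilde g) \le 1$.

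Next, I would apply the pointwise Young inequality: for almost every $x\in\Omega$, since $p(x)\in(1,\infty)$ with conjugate exponent $p'(x)$,
$$
|\tilde f(x)|\,|\tilde g(x)| \;\le\; \frac{|\tilde f(x)|^{p(x)}}{p(x)} + \frac{|\tilde g(x)|^{p'(x)}}{p'(x)}.
$$
Integrating over $\Omega$ and using $1/p(x) \le 1$ and $1/p'(x) \le 1$ together with the bounds on the modulars yields
$$
\int_\Omega |\tilde f||\tilde g|\,dx \;\le\; \int_\Omega |\tilde f|^{p(x)}\,dx + \int_\Omega |\tilde g|^{p'(x)}\,dx \;\le\; 1 + 1 = 2.
$$

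Multiplying through by $\|f\|_{p(\cdot)}\|g\|_{p'(\cdot)}$ recovers the stated inequality. The only mildly delicate point, and essentially the sole thing to verify carefully, is the assertion $\varrho_{p(\cdot)}(f/\|f\|_{p(\cdot)}) \le 1$: since the modular $\lambda \mapsto \varrho_{p(\cdot)}(f/\lambda)$ is continuous from the right at $\lambda = \|f\|_{p(\cdot)}$ (by dominated convergence, using that $|f/\lambda|^{p(x)}$ is dominated by $|f/\lambda_0|^{p(x)} \in L^1(\Omega)$ for any fixed $\lambda_0 < \lambda$ with $\varrho_{p(\cdot)}(f/\lambda_0) \le 1$), the infimum in the Luxemburg norm is in fact attained. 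This is the standard (and really the only) subtlety in the argument; once it is in hand, the rest is bookkeeping.
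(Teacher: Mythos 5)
The paper states this theorem in the appendix without proof, citing standard references (\cite{DHHR}, \cite{KR}, \cite{RaRe}, \cite{HH}); there is no in-paper argument to compare against. Your proof is the standard textbook route: normalize by the Luxemburg norms, apply the pointwise Young inequality $|\tilde f||\tilde g|\le \frac{|\tilde f|^{p(x)}}{p(x)}+\frac{|\tilde g|^{p'(x)}}{p'(x)}$, use $1/p(x)\le 1$ and $1/p'(x)\le 1$, and bound the two modulars by $1$ each. The overall structure is correct and complete.

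One small confusion in your final paragraph: to dominate $|f/\lambda|^{p(x)}$ for $\lambda$ in a right-neighborhood of $\|f\|_{p(\cdot)}$, you need a $\lambda_0\le\|f\|_{p(\cdot)}$, and for such $\lambda_0$ (strictly less) one generally has $\varrho_{p(\cdot)}(f/\lambda_0)>1$, contradicting your stated choice ``$\lambda_0<\lambda$ with $\varrho_{p(\cdot)}(f/\lambda_0)\le 1$.'' What you actually need is only $\varrho_{p(\cdot)}(f/\lambda_0)<\infty$, which is automatic here because $p$ is bounded: $|f/\lambda_0|^{p(x)}\le\max(\lambda_0^{-p_{\min}},\lambda_0^{-p_{\max}})\,|f|^{p(x)}\in L^1(\Omega)$. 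More simply, the Fatou argument you already mention at the top of the paragraph dispenses with domination entirely: taking $\lambda_n\downarrow\|f\|_{p(\cdot)}$ with $\varrho_{p(\cdot)}(f/\lambda_n)\le 1$, Fatou gives $\varrho_{p(\cdot)}(f/\|f\|_{p(\cdot)})\le\liminf_n\varrho_{p(\cdot)}(f/\lambda_n)\le 1$, and similarly for $g$. With that small repair, the proof is sound.
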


The following version of Poincare's inequality holds

\begin{theo}\label{poinc} Let $\Omega$ be bounded. Assume that $p(x)$ is log-H\"older continuous  in $\Omega$ (that is, $p$ has a modulus of continuity $\omega(r)=C(\log \frac{1}{r})^{-1}$). For
every $u\in W_0^{1,p(\cdot)}(\Omega)$, the inequality
$$
\|u\|_{L^{p(\cdot)}(\Omega)}\leq C\|\nabla u\|_{L^{p(\cdot)}(\Omega)}
$$
holds with a constant $C$ depending only on N, $\rm{diam}(\Omega)$ and the log-H\"older modulus of continuity of $p(x)$.
\end{theo}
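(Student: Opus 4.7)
The plan is to prove the inequality via the classical Riesz-potential approach, with the main work outsourced to the boundedness of the Hardy--Littlewood maximal operator on $L^{p(\cdot)}$, for which the log-H\"older condition on $p$ is precisely the right hypothesis (this is the Diening/Cruz-Uribe--Fiorenza--Neugebauer theorem).

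First, by density it suffices to take $u\in C_0^\infty(\Omega)$. Extending $u$ by zero to a ball $B\supset\Omega$ with $\mathrm{diam}(B)\le 2\,\mathrm{diam}(\Omega)$, the classical pointwise representation gives
$$|u(x)|\;\le\; c_N \int_{B}\frac{|\nabla u(y)|}{|x-y|^{N-1}}\,dy \;=\; c_N\, I_1(|\nabla u|)(x),$$
where $I_1$ denotes the Riesz potential of order $1$ and $|\nabla u|$ is understood as extended by zero outside $\Omega$. The task then reduces to bounding $I_1$ from $L^{p(\cdot)}(B)$ into $L^{p(\cdot)}(B)$ with a constant controlled by $N$, $\mathrm{diam}(\Omega)$ and the log-H\"older modulus of $p(x)$.

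Second, I would split the Riesz integral at radius $0<r\le\mathrm{diam}(B)$ in the usual Hedberg fashion:
$$I_1(|\nabla u|)(x)\;\le\; C\, r\cdot M(|\nabla u|)(x) \;+\; C\int_{B\setminus B_r(x)}\frac{|\nabla u(y)|}{|x-y|^{N-1}}\,dy,$$
where $M$ is the Hardy--Littlewood maximal operator. The tail term is estimated by H\"older's inequality (Theorem \ref{holder}) against the indicator of $B\setminus B_r(x)$; using Proposition \ref{equi} together with the log-H\"older continuity of $p(x)$ (which controls how much $p$ oscillates on scale $r$, so that $r^{(p(x)-N)/p(x)}$ is comparable to its mean), one obtains a bound of the form $C\,r^{1-N/\bar p}\|\nabla u\|_{L^{p(\cdot)}(\Omega)}$. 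Choosing $r$ to balance both terms, and using that the diameter $\mathrm{diam}(\Omega)$ provides an a priori upper bound on $r$, yields the pointwise Hedberg inequality $I_1(|\nabla u|)(x)\le C\,M(|\nabla u|)(x)^{\theta}\|\nabla u\|_{L^{p(\cdot)}(\Omega)}^{1-\theta}$, hence after taking $L^{p(\cdot)}$ norms and invoking the boundedness of $M$ on $L^{p(\cdot)}(B)$,
$$\|u\|_{L^{p(\cdot)}(\Omega)}\;\le\; C\,\|I_1(|\nabla u|)\|_{L^{p(\cdot)}(B)}\;\le\; C\,\|\nabla u\|_{L^{p(\cdot)}(\Omega)},$$
with the required dependence of $C$.

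The main obstacle is the boundedness of $M$ on $L^{p(\cdot)}(B)$: this is the deep ingredient and is exactly where the log-H\"older hypothesis enters in an essential way (counterexamples show that a strictly weaker modulus of continuity is insufficient). Once this is granted as a black box, the Hedberg splitting, the pointwise Riesz representation, and the density argument are routine. Everything else---the reduction to $C_0^\infty$, tracking the dependence on $\mathrm{diam}(\Omega)$, and producing the final norm inequality---is standard machinery for variable-exponent Lebesgue spaces already recalled in the Appendix.
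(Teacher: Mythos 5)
The paper does not actually prove this statement: Theorem \ref{poinc} appears in the Appendix as a known result, with a pointer to the references \cite{DHHR}, \cite{KR}, \cite{RaRe}, \cite{HH}. Your Riesz-potential strategy is correct and essentially matches the proof in \cite{DHHR}, so there is no conflict in substance. The one genuinely vague spot in your sketch is the Hedberg interpolation: balancing $r\,M(|\nabla u|)(x)$ against the tail forces an $x$-dependent choice of $r$, and with a variable exponent the interpolation power $\theta$ is then also $x$-dependent, so the final passage to $L^{p(\cdot)}$ norms is not nearly as routine as the remark about ``$r^{(p(x)-N)/p(x)}$ being comparable to its mean'' suggests. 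Happily, the interpolation is also unnecessary here: since $\Omega$ is bounded you may simply take $r=\mathrm{diam}(B)$ in the Hedberg split, so that $B\setminus B_r(x)=\emptyset$ for every $x\in B$, the tail term vanishes outright, and you obtain the clean pointwise estimate $I_1(|\nabla u|)(x)\le C\,\mathrm{diam}(\Omega)\,M(|\nabla u|)(x)$ on $B$. The boundedness of $M$ on $L^{p(\cdot)}(B)$ --- which needs $p_{\min}>1$, a standing assumption in the paper, in addition to the log-H\"older condition --- then finishes the argument exactly as you describe. You should also record the small but necessary point that $p$ must be extended from $\Omega$ to $B$ while preserving its log-H\"older modulus (a McShane-type extension does this), so that $L^{p(\cdot)}(B)$ is well defined and $\|u\|_{L^{p(\cdot)}(B)}=\|u\|_{L^{p(\cdot)}(\Omega)}$ for the zero extension of $u$.
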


\smallskip

For the proof of these results and more about these spaces, see \cite{DHHR},
\cite{KR}, \cite{RaRe}, \cite{HH} and the references therein.

\bigskip

We will also need

\begin{lemm}\label{development11} Let $1<p_0<+\infty$.
Let $u$ be  Lipschitz continuous in $\overline{B_1^+}$, $u\geq 0$
in $B_1^+$, $\Delta_{p_0} u=0$ in $\{u>0\}$ and $u=0$ on $\{x_N=0\}$. Then,
in $B_1^+$ $u$ has the asymptotic development
$$
u(x)=\alpha x_N+ o(|x|),
$$
with $\alpha\ge 0$.
\end{lemm}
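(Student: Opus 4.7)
The plan is a blow-up argument at the origin. For $0<\lambda<1$, set $u_\lambda(x)=u(\lambda x)/\lambda$ on $B_{1/\lambda}^+$. Since $u$ is Lipschitz, say with constant $L$, on $\overline{B_1^+}$, the family $\{u_\lambda\}$ is equi-Lipschitz with the same constant $L$, satisfies $0\le u_\lambda(x)\le L|x|$, $u_\lambda\equiv 0$ on $\{x_N=0\}$, and $\Delta_{p_0} u_\lambda=0$ in $\{u_\lambda>0\}$. For any sequence $\lambda_k\to 0^+$, the Arzela-Ascoli theorem extracts a subsequence with $u_{\lambda_k}\to u_0$ locally uniformly on $\overline{\R^N_+}$, where $u_0$ is $L$-Lipschitz, nonnegative, vanishes on $\{x_N=0\}$, grows at most linearly, and is $p_0$-harmonic in the open set $\{u_0>0\}$ (the latter following from standard stability of the $p_0$-Laplacian under uniform convergence, together with the interior $C^{1,\alpha}$ gradient estimates for $p_0$-harmonic functions).

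The main step is to classify these blow-up limits: every such $u_0$ must equal $\alpha_0 x_N$ on $\overline{\R^N_+}$ for some $\alpha_0\ge 0$. If $u_0\equiv 0$ one takes $\alpha_0=0$. Otherwise the strong minimum principle for the $p_0$-Laplacian forces $u_0>0$ throughout $\R^N_+$, so $u_0$ is a positive $p_0$-harmonic function in the whole open half-space with continuous zero boundary data on $\{x_N=0\}$. A further blow-down $v_R(x)=u_0(Rx)/R$, $R\to\infty$, yields along a subsequence a positively $1$-homogeneous limit $v_\infty$ which is $p_0$-harmonic in $\R^N_+$, nonnegative, Lipschitz, and zero on $\{x_N=0\}$. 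Translation invariance in $x_1,\dots,x_{N-1}$ combined with $1$-homogeneity forces $v_\infty$ to depend only on $x_N$, and solving the resulting one-dimensional equation gives $v_\infty(x)=\alpha\, x_N$ for some $\alpha\ge 0$. A comparison argument for the $p_0$-Laplacian in bounded subdomains of $\R^N_+$, comparing $u_0$ with $(\alpha\pm\delta)x_N$, then forces $u_0\equiv \alpha\, x_N$ as well.

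To upgrade the subsequential convergence to the stated development, one shows $\alpha$ is independent of the chosen subsequence. Evaluating two subsequential limits at $x=e_N$ identifies $\alpha$ with $\lim_{k\to\infty} u(\lambda_k e_N)/\lambda_k$; since $t\mapsto u(te_N)/t$ is bounded on $(0,1)$ and every subsequential limit is forced by the classification to equal the same constant, the limit $\alpha=\lim_{t\to 0^+} u(te_N)/t\ge 0$ exists. Consequently $u_\lambda\to \alpha\, x_N$ uniformly on $\overline{B_1^+}$ as $\lambda\to 0^+$, which is precisely the asserted expansion $u(x)=\alpha\, x_N+o(|x|)$. The main obstacle is the Liouville-type classification on $\R^N_+$: the uniqueness step for positive $p_0$-harmonic functions in the half-space with linear growth and vanishing boundary values requires a careful combination of the blow-down for homogeneity, translation invariance, and comparison-principle arguments, since such a rigidity statement has no elementary proof for the nonlinear $p_0$-Laplacian.
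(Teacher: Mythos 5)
The paper itself does not prove this lemma; it simply cites \cite{CLW1} for $p_0=2$, \cite{DPS} for general $p_0$, and \cite{MW} for a more general operator. Your blow-up strategy is in the spirit of those references, but as written it contains a genuine error and leaves the hardest ingredients unjustified.

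The central error is the assertion that ``the strong minimum principle for the $p_0$-Laplacian forces $u_0>0$ throughout $\R^N_+$.'' The blow-up limit $u_0$ is $p_0$-harmonic only in the open set $\{u_0>0\}$; you have no PDE information at points where $u_0$ vanishes. Because $u_0\ge 0$ and $\Delta_{p_0}u_0=0$ only where $u_0>0$, the natural global object $u_0$ is a \emph{subsolution} of $\Delta_{p_0}$ in $\R^N_+$, and the minimum principle for subsolutions says nothing about positivity in the interior. Indeed, $u_0(x)=(x_N-a)^+$ for $a>0$ is a nonnegative, Lipschitz, linearly growing $p_0$-subharmonic function in $\R^N_+$, $p_0$-harmonic where positive and vanishing on $\{x_N=0\}$, yet is not of the form $\alpha x_N$. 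Ruling such limits out is exactly the content of the lemma and cannot be obtained from the SMP alone. Once this step fails, the subsequent claims that $u_0$ is $p_0$-harmonic on the whole half-space (needed for the blow-down) and that a comparison with $(\alpha\pm\delta)x_N$ pins down $u_0$ both lose their footing.

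Two further gaps remain even if the positivity of $u_0$ were established. First, the statement that blow-down limits $v_\infty$ are $1$-homogeneous requires a monotonicity formula (a Weiss-type energy for the $p$-Laplacian, or in the $p=2$ case the tools used in \cite{CLW1}); $u_0$ is not assumed to be a minimizer here, so you cannot invoke the Weiss formula for the Alt--Caffarelli energy and you must supply the relevant monotonicity or replace it with a boundary Harnack / Phragm\'en--Lindel\"of argument. Second, the uniqueness of $\alpha$ over all subsequences $\lambda_k\to0$ is asserted circularly: the classification tells you each subsequential limit has the form $\alpha_0 x_N$, but gives no reason why two different subsequences should produce the same $\alpha_0$. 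One needs an additional ingredient — monotonicity of an averaged quantity in $r$, or a specific comparison — to obtain a full limit rather than merely subsequential limits. These are precisely the technical points the cited references are invoked to settle, so your sketch, while structurally reasonable, does not yet constitute a proof.
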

\begin{proof}
See \cite{CLW1} for $p_0=2$, \cite{DPS} for $1<p_0<+\infty$ and \cite{MW} for a more general operator.
\end{proof}




\end{document}